\definecolor{darkblue}{rgb}{0.1,0.1,0.7}
\definecolor{darkred}{rgb}{0.7,0.1,0.1}
\newtheorem{theorem}{Theorem}[section]
\newtheorem{lemma}[theorem]{Lemma}
\newtheorem{corollary}[theorem]{Corollary}
\newtheorem{remark}[theorem]{Remark}
\newtheorem{definition}[theorem]{Definition}
\numberwithin{equation}{section}
\numberwithin{figure}{section}
\newcommand{\scalar}[2]{\langle #1 , #2\rangle}
\newcommand{\ind}{\mathbf{1}}
\newcommand{\e}{\varepsilon}
\renewcommand{\rho}{\varrho}
\renewcommand{\phi}{\varphi}
\newcommand{\IND}{{\bf 1}}
\DeclareMathOperator{\var}{Var}
\DeclareMathOperator{\cov}{Cov}
\DeclareMathOperator{\Ent}{Ent}
\newcommand{\grad}{\nabla}
\newcommand{\be}{\begin{equation}} 
\newcommand{\Om}{\Omega}
\newcommand{\cC}{\ensuremath{\mathcal C}}
\newcommand{\cF}{\ensuremath{\mathcal F}}
\newcommand{\cP}{\ensuremath{\mathcal P}}
\newcommand{\cS}{\ensuremath{\mathcal S}} 
\newcommand{\cT}{\ensuremath{\mathcal T}}
\newcommand{\cW}{\ensuremath{\mathcal W}}
\newcommand{\bbE}{{\ensuremath{\mathbb E}} }
\newcommand{\bbN}{{\ensuremath{\mathbb N}} }
\newcommand{\bbR}{{\ensuremath{\mathbb R}} }
\newcommand{\si}{\sigma} 
\newcommand{\ent}{{\rm Ent} } 
\let\a=\alpha \let\b=\beta   \let\d=\delta  \let\e=\varepsilon
 \let\g=\gamma     \let\k=\kappa  \let\l=\lambda
      \let\o=\omega      
\let\r=\rho  \let\s=\sigma \let\t=\tau   
\let\D=\Delta   \let\G=\Gamma   
\let\O=\Omega
\def\({\left(}
\def\){\right)}
\begin{document}
	
	\begin{abstract}
	We derive entropy factorization estimates for spin systems using the stochastic localization approach proposed by Eldan \cite{eld13} and Chen-Eldan \cite{cheneldan}, which, in this context, is equivalent to the renormalization group approach developed independently by Bauerschmidt, Bodineau, and Dagallier \cite{BBD2023}.
%		We derive entropy factorization estimates for spin systems  using the stochastic localization approach proposed by Eldan \cite{eld13} and Chen-Eldan \cite{cheneldan}, which in this context is equivalent to the renormalization group approach developed independently by Bauerschmidt, Bodienau and Dagallier \cite{BBD2023}. 
The method provides approximate Shearer-type inequalities for the corresponding Gibbs measure at sufficiently high temperature, without restrictions on the degree of the underlying graph. For Ising systems, these are shown to hold up to the critical tree-uniqueness threshold, including polynomial bounds at the critical point, with optimal $O(\sqrt n)$ constants for the Curie-Weiss model at criticality. In turn, these estimates imply tight mixing time bounds for arbitrary block dynamics or Gibbs samplers,  
improving over existing results.	Moreover, we establish new tensorization statements for the Shearer inequality asserting that if a system consists of weakly interacting but otherwise arbitrary components, each of which satisfies an approximate Shearer inequality, then the whole system also satisfies such an estimate.  
	\end{abstract}

\author{Pietro Caputo}
\address{Department of Mathematics and Physics, Roma Tre University.}
\email{pietro.caputo@uniroma3.it}
\author{Zongchen Chen}
\address{School of Computer Science, Georgia Institute of Technology.}
\email{chenzongchen@gatech.edu}
\author{Daniel Parisi}
\address{Department of Mathematics, University Roma La Sapienza.}
\email{daniel.parisi@uniroma1.it}

	\title[entropy factorizations using stochastic localization]{Factorizations of Relative Entropy \\ Using Stochastic Localization} %[entropy factorizations through stochastic localization]{A note on entropy factorizations \\ through stochastic localization}
	\maketitle

\section{Introduction}
\thispagestyle{empty}
Consider a spin system on a finite graph $G=(V,E)$ such as the Potts model with $q$ colors. This is the probability measure $\mu$ on $\O=[q]^V$ given by 
\begin{align}\label{spinsys0}
	\mu(\si) = \frac1{Z}\,\exp\(2\beta\textstyle{\sum_{ij\in E}}\IND_{\sigma_i=\sigma_j} + \textstyle{\sum_{i\in V}} \psi_i(\sigma_i)\) \,,
\end{align}
where $\si=(\si_i,\,i\in V)$ is the spin configuration, $Z$ is the normalization constant, $\psi_i$ are given self-potentials, and $\b\in\bbR$ is a parameter measuring the strength of the interaction.  
The model is ferromagnetic if $\b\ge 0$ and antiferromagnetic otherwise. When $q=2$ this is the Ising model on the graph $G$ with external fields.
The Glauber dynamics for the spin system is the Markov chain where at each step one vertex is chosen uniformly at random and its spin value is updated with a sample from the conditional distribution given all other spin values.   
When the graph has bounded maximum degree, optimal $O(n\log n)$ mixing time bounds for the Glauber dynamics of such spin systems were obtained recently in \cite{CLV21} under the assumption of spectral independence. Here $n=|V|$ is the size of the vertex set, and the bounds are optimal in the sense that they catch, up to a constant factor, the correct growth rate of the mixing time as a function of $n$. 

The notion of spectral independence, introduced in \cite{ALO}, provides a powerful tool for the analysis of weakly dependent spin systems. For instance, spectral independence is known to hold % it allows one to cover
for the ferromagnetic Ising model, for all values of $\b$ in the tree-uniqueness region, uniformly in the choice of the external fields \cite{CLV21,FGYZ2022}. The estimates in \cite{CLV21} were then extended in \cite{BCCPSV22} to obtain optimal mixing time bounds for arbitrary block dynamics, where at each step a whole block of variables rather than a single vertex is updated with a sample from the conditional distribution given the remaining variables. Both papers \cite{CLV21,BCCPSV22} used extensively the method of entropy factorizations, a systematic study of which was introduced in \cite{CP2021}. Entropy factorization inequalities are naturally linked to Gibbs samplers or heat-bath block dynamics. They are known to imply lower bounds on the relative entropy contraction rate for arbitrary Gibbs samplers, which in turn provide mixing time bounds for the corresponding  Markov chain. They can also be interpreted as approximate versions of the Shearer inequality, a well-known family of inequalities satisfied by the relative entropy with respect to a product measure. 
Combined with spectral independence, the method of entropy factorizations turned out to be rather powerful in the context of bounded-degree graphs, where it also allows one to obtain optimal $O(\log n)$ mixing time bounds for the Swendsen-Wang dynamics \cite{BCCPSV22}. 
An extension to unbounded-degree graphs remained however a challenging task. We refer to \cite{AnariEntInd,CFYZ} for optimal results for the Glauber dynamics for two-valued spin systems under spectral independence without restrictions on the degrees. 
Extending these results to arbitrary block dynamics was the main motivation for our present work. 

On the other hand, Bauerschmidt, Bodineau, and Dagallier \cite{BB19,BD2022,BBD2023} proposed a different approach based on renormalization group ideas, which allows one to obtain tight entropy contraction estimates for the Glauber dynamics of Ising-like systems under a high-temperature assumption expressed in terms of a covariance estimate, without restrictions on the degrees. An essentially equivalent method, based on the stochastic localization process introduced earlier by Eldan \cite{eld13}, was then proposed by Chen and Eldan \cite{cheneldan}. The latter work provides a unified framework allowing a simple derivation of several recently established facts, including a bound for the Sherrington--Kirkpatrick model that was previously obtained in \cite{EKZ2022,AJKPV2021}, as well as the results for Glauber dynamics given in  \cite{AnariEntInd,CFYZ}. Very recently, this approach was extended considerably in \cite{CCYZ25}, to handle two-spin valued systems up to and including the tree-uniqueness threshold, with polynomial bounds at the uniqueness threshold; see also \cite{prodromidis2024polynomial}.

Our first contribution in this paper is an extension of the results from \cite{BD2022,cheneldan} and \cite{CCYZ25} to cover arbitrary block dynamics, under the same high-temperature assumptions. Moreover, we formulate our results in the general setting of arbitrary $q$-valued  spin systems rather than just two-valued spins.   In the special case of the graphical Ising model, this yields tight entropy factorization estimates in the whole tree-uniqueness regime, including polynomial bounds at the uniqueness threshold, with no restriction on the degrees, thereby improving the results established in \cite{BCCPSV22}. An interesting question that remains open is to obtain $O(\log n)$ mixing time bounds for the Swendsen-Wang dynamics of the ferromagnetic Potts model, thus extending one of the results of \cite{BCCPSV22} to the unbounded-degree setting.

Motivated by applications to mean field particle system approximations to nonlinear dynamics for spin systems \cite{CapSin}, we also consider an extension of these results to spin systems consisting of several weakly interacting components, with minimal assumptions on the distribution of the single components. One of the results we obtain in this case is that if the interaction between the components is suitably weak, then the components can be approximately factorized. Furthermore, we establish a new tensorization statement for the approximate Shearer inequality asserting that if a system consists of weakly interacting but otherwise arbitrary components, each of which 
satisfies an approximate Shearer inequality, then the whole system also satisfies such an estimate.   

Our proofs closely follow the stochastic localization approach of \cite{cheneldan}, with the main differences being that we address arbitrary Gibbs samplers instead of focusing solely on Glauber dynamics, consider spins with 
$q$ values rather than restricting to 2-valued spins, and explicitly handle spin systems with multiple interacting components. For the reader's convenience, we provide an essentially self-contained treatment.

\section{Setting and main results}\label{sect:mainres}
\subsection{Spin system}
A spin system is a probability measure over spin configurations. A spin configuration is a map $\si:V\mapsto [q]$, where $V$ is a finite set of vertices, and $[q]=\{1,\dots,q\}$ is the set of spin values. Here $q\in\bbN$, $q\ge 2$ is fixed. The set $V$ is identified with $[n]=\{1,\dots,n\}$, where $n\in\bbN$ is the number of vertices.  We write  $\Omega=[q]^V$ for the set of spin configurations and $\cP(\O)$ for the set of probability measures on $\O$.   We shall consider spin systems of the form $\mu\in \cP(\O)$, with 
\begin{align}\label{spinsys}
	\mu(\si) = \frac1{Z}\,\exp\(\sum_{i,j=1}^n\phi_{i,j}(\sigma_i,\sigma_j) + \sum_{i=1}^n \psi_i(\sigma_i)\) \,,
\end{align}
where $Z$ is the normalization constant. 
For each $i,j\in [n]$,  $\phi_{i,j}:[q]\times[q]\mapsto\bbR$ is a  given function representing the {\em pair interaction}, and satisfying the symmetry $ \phi_{i,j}(a,b)=\phi_{j,i}(b,a)$, while $\psi_i:[q]\mapsto\bbR$ are given functions called the {\em self potentials}. % or {\em external fields}. 
We refer to \eqref{spinsys} as the Gibbs measure. 

The main examples we have in mind are spin systems such as the {\em Ising model} and the {\em Potts model} on a graph. If $G=(V,E)$ is a finite graph, the $q$-Potts model on $G$ with parameter $\beta\in\bbR$ and zero external field is obtained by taking 
\begin{align}\label{eq:Potts}
\phi_{i,j}(a,b) = \beta A(G)_{i,j}\IND_{a=b}\,,
\end{align} 
for all $a,b\in[q]$, where $  A(G)$ denotes the adjacency matrix of $G$. When $q=2$ this is the Ising model on $G$. We refer to \cite{FriedliVelenik} for the classical statistical mechanics background %on the  statistical mechanical properties of 
concerning such spin systems. For simplicity, here we do not discuss systems with hard constraints such as hard-core gases or graph colorings. 
We shall however consider a class of generalized spin systems later in the paper, which includes conservative systems obtained by imposing certain hard constraints.

\subsection{Entropy factorizations}
We start with some notation. For a function $f:\Omega \mapsto \bbR$, and a probability $\mu\in\cP(\O)$, we write $
\mu(f) = \sum_{\s \in \Omega}  \mu(\s) f(\s)$ for the mean of $f$ with respect to $\mu$. For $f\ge 0$, we use the notation 
\[
\ent
f = \mu(f \log f) - \mu(f)  \log \mu(f)\,,
\]
for the entropy of $f$ with respect to $\mu$, so that %When $f\geq 0$ is such that  $\nu[f] =1$, then
$\ent(f) =\mu(f) H(\rho\,|\, \mu),$ where $\rho\in\cP(\O)$ is the probability $\rho=(f/\mu(f))\mu$ and $H(\rho\,|\, \mu)$ denotes the relative entropy, or Kullback-Leibler divergence, of the distribution $\rho$ with respect to  $\mu$.
Note that $\ent f$ depends implicitly on the reference measure $\mu$. If we need to emphasize this dependence we use the symbol $\ent_\mu f$ instead. However, since $\mu$ will be fixed once for all this should be no source of confusion.

Given $\tau \in \Om$, for any $A \subset V$, we define $\mu_A^\t\in\cP(\O)$ as the conditional distribution $\mu_A^\t(\sigma) \propto \mu(\sigma)\IND_{\si_{A^c}=\t_{A^c}}$, where 
$\sigma_{A^c} =(\si_i,i\in A^c)$, and $A^c=V\setminus A$. 
For $f:\O\mapsto\bbR$ we write $\mu_A f$ for the {\em conditional expectation}, namely the function 
$\mu_A f : \O\mapsto \bbR$ defined by $\mu_A f (\t) := \mu_A^{\t} (f)$. For $f\ge 0$, we define the map $\ent_A f: \O\mapsto\bbR_+$ as 
\begin{align}\label{eq:def-ent-la}
	\ent_A f: \;\t\mapsto 	\ent_{\mu_A^\t} f = \mu_A^\tau\(f  \log f\) - \mu_A^\tau\(f\)  \log\mu_A^\tau\( f\)\,.
\end{align}
With this notation, we have \[
\ent_A f=\mu_A\left(f\log(f/\mu_A(f))\right)\,,\qquad \mu(\ent_A f) =  \mu\left(f\log(f/\mu_A(f))\right)\,.
\]
The following identity for any $f\ge 0$ and $A\subset V$ is an immediate consequence: 
\begin{align}\label{eq:deco}
\ent
f =	\mu(\ent_A f) + \ent
(\mu_A f) \,.
\end{align}

Next, we define the notion of entropy factorization as in \cite{CP2021}. Let $\cW$ denote the set of probability measures over subsets of $V$, that is $\a\in\cW$ is a vector of weights $\a=(\a_A, \,A\subset V)$ with $\a_A\ge 0$ and $\sum_{A\subset V}\a_A=1$. 
For any $\a\in\cW$ let $\gamma(\alpha)=\min_i\sum_{A\ni i}\alpha_A$ denote the minimum covering probability. 
\begin{definition}\label{def:ent_fact}
For any $\a\in\cW$, we say that $\mu$ satisfies the $\a$-{\em entropy factorization} with constant $C_\alpha$ if, for all $f\ge 0$, 
\begin{align}\label{sheaineq}
	\gamma(\alpha) \Ent f \leq C_\alpha \sum_{A \subset V}\alpha_A\,\mu\(\ent_A f\)\,,
\end{align}
where $\gamma(\alpha)=\min_i\sum_{A\ni i}\alpha_A$. If $C_\a\le C$ for all  $\a\in\cW$ we say that the {\em approximate Shearer inequality} holds for $\mu$ with constant $C$. If the latter holds uniformly in the choice of the self potentials $\psi_i$ we say that the {\em strong approximate Shearer inequality} holds for $\mu$ with constant $C$.
\end{definition}
The above definition is motivated by the well-known fact that when $\mu$ is a product measure $\mu=\otimes_{i=1}^n \mu_i$, for arbitrary probabilities $\mu_i$ on $[q]$, then 
 $\mu$ satisfies the $\a$-{\em entropy factorization} with constant $C_\alpha=1$ for every $\a\in\cW$, and that this property of product measures is equivalent to the Shearer inequality satisfied by the Shannon entropy, see e.g.\  \cite{PCnotes}. See also \cite{BaBo,MadimanTetali} for background on the Shearer inequality and its applications. Establishing Shearer-type inequalities for non-product measures is a challenging task that was addressed in several recent works, see \cite{CP2021,BC2024,BCCPSV22,caputo_salez_2024entropy}. Our main results here give sufficient conditions on the pair interactions $\phi_{i,j}$ for the Gibbs measure $\mu$ in \eqref{spinsys} to satisfy the strong approximate Shearer inequality.

\subsection{Block dynamics and entropy contractions}
For $\a\in\cW$, the $\a$-weighted block dynamics, or $\a$-Gibbs sampler, for the Gibbs measure $\mu$, is the Markov chain with state space $\O$ and transition matrix 
\begin{align}\label{eq:pa}
P_{\alpha}(\sigma,\eta)=\sum_{A \subset V}\alpha_A\mu_A^\sigma(\eta)\,.
\end{align}
In words, at each step a block $A$ of vertices is picked with probability $\a_A$, and the spins in the ``block'' $A$ are resampled according to the conditional distribution on $A$ given the spins outside of $A$. 
For a function $f$ we then have 
\[
P_{\alpha}f(\sigma)=\sum_{\eta\in\O}P_{\alpha}(\sigma,\eta)f(\eta)=\sum_{A \subset V}\alpha_A\mu_A^\sigma(f)\,,
\]
or equivalently $P_{\alpha}=\sum_{A \subset V}\alpha_A\mu_A$ which expresses the operator $P_\a$ as a mixture of orthogonal projections in $L^2(\O,\mu)$. In particular, $P_\a$ is self-adjoint, and the Markov chain is reversible with invariant measure $\mu$.  The Markov chain is irreducible iff $\g(\a)>0$, where $\gamma(\alpha)=\min_i\sum_{A\ni i}\alpha_A$. 

The case $\a_A=\frac1n\IND_{|A|=1}$ is usually called the {\em Glauber dynamics}; see e.g.\ \cite{Martinelli99}. Another commonly studied problem is the {\em even/odd Gibbs sampler} for spin systems on a bipartite graph, which is obtained when $\a$ assigns probability $1/2$ to the set of even sites and probability $1/2$ to the set of odd sites, see \cite{CP2021,BCCPSV22} for several examples. %We note that %also refer to for  applications of 
See also  \cite{Hinton} for applications of the even/odd Gibbs sampler to learning with restricted Boltzmann machines.  

The entropy contraction rate is defined as the largest $\d_\a\ge 0$ such that  
\begin{align}\label{ent_contr}
	\ent(P_\alpha f) \leq (1-\delta_\alpha)\ent(f)\,,
\end{align}
for all $f\ge 0$.  By convexity, $\ent(P_\alpha f) \le \sum_{A}\alpha_A \ent(\mu_A f)$, and using \eqref{eq:deco} we see that if $\mu$ satisfies the $\a$-entropy factorization with constant $C_\a$ as in Definition \ref{def:ent_fact}, then %for all $\a\in\cW$, 
\begin{align}\label{fact_contr}
	\delta_\alpha\ge \bar \d_\a:=\g(\a)/C_\a\,.
\end{align}
The inequality \eqref{ent_contr} has consequences on the speed of convergence of the $\a$-Gibbs sampler to the stationary measure $\mu$. A standard linearization argument shows that   \eqref{ent_contr} implies the same inequality with the entropy functional replaced by the variance functional, so that $\d_\a$ is a lower bound on the spectral gap of the Markov chain with transition $P_\a$. 
Moreover, it is known that $\d_\a$ is a lower bound on the modified log-Sobolev constant of the $\a$-Gibbs sampler. Therefore,  the mixing time of the chain can be estimated by using the entropy contraction rate: for $\a\in\cW$, the $\a$-entropy factorization estimate \eqref{sheaineq} with constant $C_\a$  provides an upper bound on the mixing time of the $\a$-Gibbs sampler of the form
\begin{align}\label{fact_contr_mix}
T_{\rm mix}(P_\a)= \frac{C_\a}{\g(\a)}\times O(\log n)\,,
\end{align}
 see e.g.\  \cite{PCnotes} for a proof of these facts.
 For instance, when $\a_A=\frac1n\IND_{|A|=1}$,  if  one has $C_\a\le C$ in \eqref{sheaineq} with $C$ independent of $n$, then the Glauber dynamics has optimal mixing time $O(n\log n)$. Similarly, if a spin system on a bipartite graph has $C_\a\le C$ for the weights $\a$ corresponding to the even/odd Gibbs sampler, then this Markov chain has mixing time $O(\log n)$. We refer to \cite{BCCPSV22} for a discussion of several examples showing that the approximate Shearer inequality from Definition \ref{def:ent_fact} is often an efficient way of establishing optimal mixing for arbitrary block dynamics.  

The entropy contraction rate  $\d_\a$ is equivalently described as the best constant in the inequality
\begin{align}\label{ent_contr2}
	H(\nu P_\a\,|\,\mu)\le (1-\delta_\alpha)H(\nu \,|\,\mu)\,,
\end{align}
for all $\nu\in\cP(\O)$, where $\nu P_\a$ is the distribution after one step, when the initial state is distributed according to $\nu$. As recently noted in \cite{CCGP}, the $\a$-entropy factorization \eqref{sheaineq} is instead equivalent to the {\em half-step} entropy contraction described as follows. Let $\cS=2^V$ denote the set of all subsets of $V$
and consider the probability kernel $K_\a:\O\mapsto\cS\times\Omega$ defined by
\[
K_\a(\sigma,(A,\eta)) = \alpha_A\mu_A^\si(\eta)\,. 
\]
Let $K_\a^*$ denote the adjoint of $K_\a$ defined as 
\[
K_\a^*((A,\eta),\si) = \frac{\mu(\si)K_\a(\sigma,(A,\eta))}{\mu K_\a(A,\eta)}\,, 
\]
where $\mu K_\a\in \cP(\cS\times\Omega)$ is the distribution $\mu K_\a(A,\eta) =\sum_\si \mu(\si)K_\a(\sigma,(A,\eta))$. Then it is not difficult to show that $\mu K_\a(A,\eta) = \alpha_A\mu(\eta)$,  $K_\a^*((A,\eta),\sigma) = \mu_A^\eta(\sigma)$, for all $(A,\eta)\in\cS\times\Omega$, $\sigma\in \Omega$, and that  $P_\a = K_\a K^*_\a$. It follows that if $\nu = f\mu$, then 
\begin{align}\label{ent_contr21}
	H(\nu K_\a\,|\,\mu K_\a)= \sum_{A \subset V}\alpha_A\,\ent (\mu_A f)\,.
\end{align}
In particular, using \eqref{eq:deco}, we see that the $\a$-entropy factorization with constant $C_\a$ as in Definition \ref{def:ent_fact} is equivalent to the statement
\begin{align}\label{ent_contr22}
	H(\nu K_\a\,|\,\mu K_\a)\le (1-\bar\d_\a)H(\nu \,|\,\mu )\,,\qquad \bar \d_\a=\g(\a)/C_\a\,,
\end{align}
where the constant $\bar\d_\a$ is the same as in \eqref{fact_contr}.
This is a quantitative form of the data processing inequality. Since $P_\a = K_\a K^*_\a$, we refer to \eqref{ent_contr22} as a half-step entropy contraction for the $\a$-Gibbs sampler.
We refer to \cite{CCGP} for several other examples of half-step contractions and for comparisons between half-step and one-step contractions. 
\subsection{Main results}
Let $\mu$ be a Gibbs measure as in \eqref{spinsys}. Define the $qn\times qn$ matrix
\begin{align}\label{def:Gamma}
\G(i,a;j,b) = \phi_{i,j}(a,b)\,,\qquad i,j\in[n],\,a,b\in[q]\,,
\end{align}
and write $\l(\G)$ for the maximum eigenvalue of the symmetric matrix $\G$. %$\l(\G)$ for the operator norm of $\G$. 
 Our first result is the following criterium for the validity of the strong approximate Shearer inequality. 
%\begin{theorem}\label{mainthm2} 
%	Suppose that $\Gamma$ is positive semidefinite and that $\l(\G)<1$.	
%	Then for all $\alpha\in\cW$, for any choice of the self potentials $\psi_i:[q]\mapsto \bbR$ in \eqref{spinsys}, % the approximate Shearer inequality
%	\begin{align}\label{sheaineq2}
%		\gamma(\alpha) \,\ent f \leq \left(1-\l(\G)\right)^{-1}%\frac1{1-\l(\G)}
%		\sum_{A \subset V}\alpha_A\,\mu\left(\ent_Af\right)\,,
%	\end{align}
%for all functions $f\ge 0$. That is, $\mu$ satisfies the strong approximate Shearer inequality with constant $C:=\left(1-\l(\G)\right)^{-1}$.
%\end{theorem}
\begin{theorem}\label{mainthm2} 
	Suppose that $\Gamma$ is positive semidefinite and that $\l(\G)\le 1-\delta$ where $\delta \in [0,1]$.	
	For all $\alpha\in\cW$, for any choice of the self potentials $\psi_i:[q]\mapsto \bbR$ in \eqref{spinsys}, for all functions $f\ge 0$, % the approximate Shearer inequality
	\begin{align}\label{sheaineq2}
		\gamma(\alpha) \,\ent f \leq C
		\sum_{A \subset V}\alpha_A\,\mu\left(\ent_Af\right)\,,
	\end{align}
	 where
	\begin{align*}
		C := 
		\begin{cases}
			ne^{1-\delta n}, & \text{if } \delta \in [0,\frac{1}{n}); \\
			\frac{1}{\delta}, & \text{if } \delta \in [\frac{1}{n},1]. \\
		\end{cases}
	\end{align*}
	That is, $\mu$ satisfies the strong approximate Shearer inequality with constant $C$.
\end{theorem}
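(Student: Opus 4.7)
The plan is to follow the stochastic localization approach of Chen and Eldan \cite{cheneldan}, extended to arbitrary weight vectors $\a\in\cW$ and $q$-valued spins. The PSD hypothesis $0\preceq\G\preceq(1-\d)I$ allows a Gaussian decoupling of the pair interaction: encoding $\si_i\in[q]$ as the indicator vector $\hat\si_i\in\bbR^q$, one has $\exp\bigl(\tfrac12\scalar{\hat\si}{\G\hat\si}\bigr)=\E_g\exp(\scalar{g}{\hat\si})$ with $g\sim N(0,\G)$, so $\mu$ is a Gaussian mixture of product measures. This motivates introducing a measure-valued martingale $(\mu_t)_{t\ge 0}$ with $\mu_0=\mu$, solving an Eldan-type SDE in $\bbR^{qn}$ whose role is to progressively reveal the Gaussian mixing variable. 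One arranges the driving noise so that the effective pair interaction of $\mu_t$ is $\G-t\Sigma$ for a fixed PSD matrix $\Sigma$; the assumption $\l(\G)\le 1-\d$ then guarantees that this matrix enters the negative-definite regime within a time of order $1/\d$, at which $\mu_T$ is essentially a product measure.

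Given $f\ge 0$, I would track the rebased entropy
\begin{equation*}
\Psi(t):=\E\bigl[\mu_t(f)\,H\bigl(\tfrac{f}{\mu_t(f)}\mu_t\,\big|\,\mu_t\bigr)\bigr],
\end{equation*}
satisfying $\Psi(0)=\ent_\mu f$. A standard It\^o computation gives $-\Psi'(t)=\E[Q_t]$, with $Q_t$ a quadratic form in the score of $\mu_t(f)$ weighted by the covariance of the tilt at time $t$. The half-step reformulation \eqref{ent_contr22} allows one to recast the target right-hand side $\sum_A\a_A\mu(\ent_A f)$ as a genuine entropy drop along the kernel $K_\a$, which admits a parallel It\^o representation. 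The heart of the proof is a matrix spectral bound of the form $Q_t\le\kappa(t)\sum_A\a_A\mu_t(\ent_A f)$, obtained by combining the block-conditional structure of $\mu_t$ (inherited from $\mu$) with a Cauchy--Schwarz step. Integrating from $0$ to a terminal time $T$, and invoking the classical Shearer inequality with constant $1$ at the product terminal measure $\mu_T$, yields \eqref{sheaineq2} with $C=\int_0^T\kappa(t)\,dt$.

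The main obstacle, and the source of the dichotomy in $C$, is the choice of $T$. In the regime $\d\ge 1/n$ one can take $T=1/\d$: the effective interaction becomes uniformly negative definite well before the process degenerates, the integral $\int_0^{1/\d}\kappa(t)\,dt$ is of order $1/\d$, and the clean bound $C=1/\d$ follows. When $\d<1/n$, the localization time needed to fully decouple the measure exceeds what the process can safely accommodate, so one truncates at $T\sim 1$ and compensates for the residual interaction through a Radon--Nikodym / volume comparison between $\mu_T$ and a genuine product measure; this compensation is what produces the factor $ne^{1-\d n}$. A secondary difficulty, not present in \cite{cheneldan}, is that the Dirichlet bound must feature the weighted sum $\sum_A\a_A\mu(\ent_A f)$ rather than the single-site Glauber form; this is handled by running the localization within each block $A$ and exploiting that each conditional $\mu_A$ inherits the same Gaussian-decoupling structure as $\mu$.
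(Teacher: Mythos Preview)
Your high-level idea --- encode spins as indicators in $\bbR^{qn}$, run a stochastic localization that strips off the pair interaction, and apply Shearer at the product terminal measure --- matches the paper. But several of the mechanisms in your sketch are off.

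First, the terminal time is always $T=1$, not $1/\d$. The natural driving matrix is $\Sigma=\G$ itself, so the effective pair interaction of $\r_t$ is $(1-t)\G$, which vanishes exactly at $t=1$; $\r_1$ is then a genuine product over the block variables $\eta^i$, with no residual interaction to ``compensate''. The dichotomy in $C$ does not come from choosing different stopping times or from a Radon--Nikodym comparison. It comes from the uniform covariance bound
\[
\l\bigl(\cov[\cT_v\r_t]\bigr)\le\k(t):=\min\Bigl\{\tfrac{1}{2(\d+(1-\d)t)},\,\tfrac{n}{2}\Bigr\},
\]
the first branch of which is proved via your Gaussian decoupling together with the Brascamp--Lieb inequality, and the second of which is trivial. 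Entropic stability then gives $\bbE[\ent_{\r_1}f]\ge e^{-\int_0^1 a(t)\,dt}\ent_\r f$ with $a(t)=2\l(\G)\k(t)$; the constant is the exponential of an integral (Gronwall), not the additive $\int_0^T\k$ you wrote. When $\d\ge 1/n$ the first branch of $\k$ is always active and $\int_0^1 a=-\log\d$. When $\d<1/n$ the trivial $n/2$ branch is active on the short interval $[0,\tfrac{1/n-\d}{1-\d}]$, after which the first branch takes over, and the integral is $1-\d n+\log n$; this is the origin of the factor $ne^{1-\d n}$.

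Second, the extension from Glauber to arbitrary $\a\in\cW$ is not obtained by bounding $Q_t$ against the $\a$-Dirichlet form, nor by running a localization inside each block. The paper decouples the two issues cleanly. For each fixed $A\subset V$, a one-line Jensen argument (joint convexity of $(x,y)\mapsto x\log(x/y)$ and the martingale property of $\r_t(\eta)$) gives
\[
\bbE\Bigl[\textstyle\sum_\eta\r_t(\eta)\,\r_{t,A}f(\eta)\log\r_{t,A}f(\eta)\Bigr]\ge\textstyle\sum_\eta\r(\eta)\,\r_{A}f(\eta)\log\r_{A}f(\eta).
\]
Summing over $A$ with weights $\a_A$ and using the decomposition $\ent f=\r(\ent_A f)+\ent(\r_Af)$ at both $t=0$ and $t=1$ yields $\sum_A\a_A\,\r(\ent_A f)\ge\bbE\bigl[\sum_A\a_A\,\r_1(\ent_{\r_{1,A}}f)\bigr]$. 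Classical Shearer at the product $\r_1$ bounds the latter by $\g(\a)\,\bbE[\ent_{\r_1}f]$, and the entropic stability bound above closes the loop. Your proposed single differential inequality linking $Q_t$ to the $\a$-weighted conditional entropies is a harder route and unnecessary here.
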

We note that $C \le \min\{\frac{1}{\delta}, en\}$ for any $\delta \in [0,1]$ in Theorem~\ref{mainthm2}.
The request that  $\Gamma$ is positive semidefinite can be met %is no loss of generality since on can always 
by adding a constant diagonal term in the interactions without changing the distribution. On the other hand, the request $\l(\G)\le1-\delta$ is a high temperature or weak dependence condition. The latter is implied for instance by the Dobrushin-type condition
\begin{align}\label{Dobr}
\max_{i\in[n],a\in[q]}\sum_{j\in[n],b\in[q]}|\phi_{i,j}(a,b)|\le1-\delta\,.
\end{align}

Theorem \ref{mainthm2} allows us to derive meaningful estimates in the following mean field models. 
The ferromagnetic mean field $q$-Potts model with parameter $\beta\ge0$ is %given by
 \begin{align}\label{MFqPotts}
\mu(\si) \propto \exp\left(\textstyle{\frac{\beta}{n}\sum_{i,j\in V}}\IND_{\si_i=\si_j}+\textstyle{\sum_{i\in V}}\psi_i(\si_i)\right)\,.
\end{align}

\begin{corollary}\label{cor2}
	Let $\mu$ be the ferromagnetic 
	mean field 
$q$-Potts  model with $\b \in[0, 1)$. Then %, uniformly in the choice of the single site potentials $\psi_i:[q]\mapsto\bbR$,  
	$\mu$ satisfies the strong approximate Shearer inequality with constant 
	%$C:=\min\{ \frac{1}{1-\beta}, c_0 n \}$ where $c_0 > 0$ is an absolute constant.
	$C:=\left(1-\b\right)^{-1}$.
\end{corollary}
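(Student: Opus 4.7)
The plan is to apply Theorem~\ref{mainthm2} directly. The mean field $q$-Potts interaction in \eqref{MFqPotts} reads $\phi_{i,j}(a,b) = \frac{\beta}{n}\IND_{a=b}$, so the associated matrix $\Gamma$ in \eqref{def:Gamma} factorizes as a Kronecker product
\begin{equation*}
\Gamma \;=\; \frac{\beta}{n}\,J_n \otimes I_q,
\end{equation*}
where $J_n$ is the $n\times n$ all-ones matrix and $I_q$ is the $q\times q$ identity. Both factors are positive semidefinite, hence so is $\Gamma$; and the spectrum of $J_n \otimes I_q$ is $\{n\cdot 1, \ldots, n\cdot 1, 0,\ldots,0\}$, so
\begin{equation*}
\lambda(\Gamma) \;=\; \frac{\beta}{n}\cdot n \;=\; \beta.
\end{equation*}

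Thus the hypothesis of Theorem~\ref{mainthm2} holds with $\delta := 1-\beta \in (0,1]$. I then invoke the theorem to deduce that $\mu$ satisfies the strong approximate Shearer inequality with the constant $C$ given there, and it only remains to check that $C \le (1-\beta)^{-1}$ in both regimes. In the regime $\delta \ge 1/n$ this is immediate since the theorem already yields $C = 1/\delta = (1-\beta)^{-1}$. In the regime $\delta < 1/n$ the theorem gives $C = n e^{1-\delta n}$, so I need the elementary bound $n e^{1-\delta n} \le 1/\delta$ for $\delta n \in (0,1)$. Writing $y = \delta n$, this is the inequality $e^{y-1} \ge y$ on $(0,1]$, which follows since both sides agree at $y=1$ and the difference $e^{y-1} - y$ has derivative $e^{y-1}-1 \le 0$ on that interval.

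The main (and only) obstacle is the computation of $\lambda(\Gamma)$, which is handled cleanly by the tensor-product structure; everything else is bookkeeping. Since Theorem~\ref{mainthm2} supplies the strong version of the Shearer inequality (uniform in the self potentials $\psi_i$), the same holds automatically for the mean field Potts model with arbitrary external fields, completing the proof.
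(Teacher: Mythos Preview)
Your proof is correct and follows essentially the same approach as the paper: identify $\Gamma$ (after a harmless diagonal adjustment) as having top eigenvalue $\beta$, then invoke Theorem~\ref{mainthm2} with $\delta=1-\beta$. Your Kronecker-product formulation is just a clean rephrasing of the paper's explicit entrywise description, and your verification that $ne^{1-\delta n}\le 1/\delta$ for $\delta n\in(0,1)$ reproves the observation, stated right after Theorem~\ref{mainthm2}, that $C\le 1/\delta$ for all $\delta\in(0,1]$.
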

\begin{proof}
By adding a harmless diagonal term we can write \[\G(i,a;j,b):= \frac{\beta}n \IND_{i=j}\IND_{a=b}+\IND_{i\neq j}\frac{\beta}n \IND_{a=b}.\] Then $\G$ is positive semidefinite and its  maximum eigenvalue equals  $\b$.   Thus the result follows from Theorem \ref{mainthm2}.
\end{proof}

In the case $q=2$, \eqref{MFqPotts} is referred to as the mean field Ising or Curie-Weiss model, and Corollary \ref{cor2} covers the whole high-temperature region since $\b_c=1$ is known to be the critical value of the parameter $\b$ in this case. Moreover, taking $\delta=0$, Theorem \ref{mainthm2} predicts a constant $C=O(n)$ at criticality. 
In fact, we are able to obtain a better constant $C=O(\min\{(1-\b)^{-1}, \sqrt{n}\})$ than Corollary \ref{cor2},
%Moreover, in this case the constant $C=(1-\b)^{-1}$ 
where the bound $O(\sqrt{n})$ is tight for the critical Curie-Weiss model with $\b=1$, as observed in \cite{LLP10,DLP09,BD2022,CCYZ25} for the Glauber dynamics. On the other hand, for large $q$ the criterium in Corollary \ref{cor2} does not cover the whole high-temperature region since it is known that $\b_c$ grows like $\log q$ \cite{PottsCW}.

\begin{corollary}\label{cor2-Ising}
	Let $\mu$ be the ferromagnetic 
	mean field 
	Ising model with $\b \in[0, 1]$. Then %, uniformly in the choice of the single site potentials $\psi_i:[q]\mapsto\bbR$,  
	$\mu$ satisfies the strong approximate Shearer inequality with constant 
	\begin{align*}
		C:=\min\left\{ (1-\b)^{-1}, c_0 \sqrt{n} \right\},
	\end{align*}
	where $c_0 > 0$ is an absolute constant.
	%$C:=\left(1-\b\right)^{-1}$.
\end{corollary}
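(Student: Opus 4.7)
The first bound $C=(1-\beta)^{-1}$ is already delivered by Corollary \ref{cor2} applied with $q=2$, so the content to prove is the complementary bound $C\le c_0\sqrt n$, which is only informative when $(1-\beta)^{-1}\gtrsim\sqrt n$, i.e.\ near criticality. The plan is to follow the stochastic localization scheme of \cite{cheneldan,CCYZ25}, in the block-dynamics variant extended in this paper, and to exploit the very simple exchangeable covariance structure of the Curie-Weiss measure.

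The first key ingredient is a covariance estimate. For the mean-field Ising measure with $\sigma_i\in\{\pm 1\}$ and $\psi_i\equiv 0$, by exchangeability one has $\cov_\mu(\sigma_i,\sigma_j)=c$ for $i\neq j$ and $\var_\mu(\sigma_i)=1-m^2$, so the covariance matrix $\Sigma=\cov_\mu(\sigma)$ has top eigenvalue $(1-m^2)+(n-1)c$. The classical fact $\var_\mu(\sum_i\sigma_i)=O(n^{3/2})$ at criticality $\beta=1$ gives $cn=O(\sqrt n)$ and hence $\lambda_1(\Sigma)=O(\sqrt n)$. The crucial step is to show that this bound extends uniformly in $\beta\in[0,1]$ and in arbitrary self potentials $\psi_i$. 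I would do this via the Hubbard-Stratonovich identity, which reduces the computation of $\var_\mu(\sum_i\sigma_i)$ to a one-dimensional Laplace estimate on the auxiliary variable conjugate to the magnetization, with effective free energy $F(m)=\tfrac{\beta n}{2}m^2-\sum_i\log(2\cosh(\beta m+\psi_i))$.

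With this uniform covariance bound at hand, I would then run the stochastic localization process and exploit the fact that the tilted measures $\mu_t$ along the process are again mean-field Ising measures, with the same inverse temperature $\beta$ and with self potentials shifted by the driving Brownian field. Hence the bound from the previous step applies verbatim to each $\mu_t$, yielding $\lambda_1(\cov_{\mu_t})\le c_0\sqrt n$ uniformly in $t\ge 0$. Plugging this covariance estimate into the block-dynamics version of the stochastic localization argument underlying Theorem \ref{mainthm2} delivers a factorization constant of order $\sqrt n$; combined with Corollary \ref{cor2}, this produces the minimum stated in the corollary.

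The principal obstacle is the uniform-in-$\psi$ covariance estimate at criticality. Asymmetric self potentials can shift the magnetization distribution in a delicate way, and the argument must handle the degenerate case $F''(m_\star)=0$ at the critical point, where Laplace's method is only saved by the positive quartic correction $F^{(4)}(m_\star)>0$. Verifying that this quartic correction stays bounded away from zero uniformly in $\psi$, and that the resulting second-moment bound on the conjugate one-dimensional variable is indeed $O(\sqrt n)$, is the delicate quantitative point; everything else reduces to a direct application of the general machinery already used for Theorem \ref{mainthm2}.
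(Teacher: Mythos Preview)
Your plan is essentially the paper's own proof: replace the trivial $n/2$ covariance bound in Lemma~\ref{pottscovlemma} by an $O(\sqrt{n})$ bound valid uniformly over all tilts, and then rerun the computation in Section~\ref{sec:proofsec} with $n$ replaced by $2c_1\sqrt{n}$. The paper simply cites \cite[Lemma~3.27]{CCYZ25} (and the earlier works \cite{LLP10,DLP09}) for the $O(\sqrt{n})$ covariance estimate, whereas you propose to reprove it via the Hubbard--Stratonovich transform and a Laplace estimate; that is in fact how those references proceed, so the difference is only in presentation.

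Two points of precision are worth flagging. First, the localized measure $\rho_t$ in \eqref{stoclocmeasure} is a Curie--Weiss model at inverse temperature $(1-t)\beta$, not $\beta$; this does not hurt, since your covariance bound is claimed uniformly over $\beta\in[0,1]$ and $(1-t)\beta\in[0,1]$ as well, but the text should be corrected. Second, and more importantly, you must not use the bound $\kappa(t)\le c_1\sqrt{n}$ \emph{alone}: plugging only this into Lemma~\ref{prop1} gives $\int_0^1 a(t)\,dt = 2\lambda(\Gamma)c_1\sqrt{n} = O(\sqrt{n})$ and hence a Shearer constant $e^{O(\sqrt{n})}$, not $O(\sqrt{n})$. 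What makes the argument work is using the \emph{minimum} of the $c_1\sqrt{n}$ bound and the $t$-dependent bound $\frac{1}{2(1-(1-\delta)(1-t))}$ already provided by Lemma~\ref{pottscovlemma}, exactly as in \eqref{eq:covrov} but with $n/2$ replaced by $c_1\sqrt{n}$; the piecewise integration in the proof of Theorem~\ref{mainthm2} then yields $\int_0^1 a(t)\,dt \le 1 + \log(2c_1\sqrt{n})$ and hence $C=O(\sqrt{n})$. Your phrase ``plugging this covariance estimate into the argument underlying Theorem~\ref{mainthm2}'' can be read that way, but it should be made explicit.
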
 

 The $q$-Potts spin glass %$q$-Sherrington--Kirkpatrick model 
 with parameter $\beta\ge 0$ is the random probability measure %given by
 \begin{align}\label{muSK}
\mu(\si) \propto \exp\left(\textstyle{\frac{\beta}{\sqrt n}\sum_{i,j\in V}}J_{i,j}\IND_{\si_i=\si_j}+\textstyle{\sum_{i\in V}}\psi_i(\si_i)\right)\,,
\end{align}
where $J_{i,j}=J_{j,i}$, and $(J_{i,j},\,i\le j)$ are i.i.d.\ standard normal random variables. 
\begin{corollary}\label{cor3}
	Let $\mu$ be the $q$-Potts spin glass %$q$-Sherrington--Kirkpatrick 	model 
	with parameter $\b \in[0,1/4)$. Then, for any $\d>0$, with high probability as $n\to\infty$, %, uniformly in the choice of the external fields $\psi_i:[q]\mapsto\bbR$, 
	$\mu$ satisfies the strong approximate Shearer inequality with constant $C:=(1+\d)\left(1-4\b\right)^{-1}$.
\end{corollary}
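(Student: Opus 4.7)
The plan is to deduce the result from Theorem \ref{mainthm2} by exploiting the tensor structure of the interaction matrix together with a standard spectral bound for Wigner-type matrices. The key observation is that, for the $q$-Potts spin glass with $\phi_{i,j}(a,b)=\tfrac{\beta}{\sqrt n}J_{i,j}\IND_{a=b}$, the matrix $\G$ in \eqref{def:Gamma} factors as $\G=\tfrac{\beta}{\sqrt n}\,J\otimes I_q$, where $J=(J_{i,j})_{i,j\in [n]}$ is the real symmetric matrix with i.i.d.\ standard Gaussian entries on and above the diagonal and $I_q$ denotes the $q\times q$ identity. The spectrum of $\G$ is therefore $\{\tfrac{\beta}{\sqrt n}\lambda_k(J):k\in [n]\}$, each eigenvalue repeated $q$ times.

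To meet the positive semidefiniteness hypothesis of Theorem \ref{mainthm2}, I would add a harmless diagonal term: replacing $\phi_{i,j}(a,b)$ by $\phi_{i,j}(a,b)+c\,\IND_{i=j}\IND_{a=b}$ only adds the constant $cn$ to the exponent in \eqref{spinsys} and thus leaves $\mu$ unchanged. Taking $c:=\tfrac{\beta}{\sqrt n}\max(0,-\lambda_{\min}(J))$ renders the shifted $\G$ positive semidefinite, with top eigenvalue
\begin{align*}
\lambda(\G)=\frac{\beta}{\sqrt n}\left(\lambda_{\max}(J)-\lambda_{\min}(J)\right)\le \frac{2\beta}{\sqrt n}\|J\|\,,
\end{align*}
where $\|J\|$ denotes the operator norm of $J$.

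The probabilistic input is the classical fact that for any $\eta>0$,
\begin{align*}
\lim_{n\to\infty}\Prob\!\left(\|J\|\le(2+\eta)\sqrt n\right)=1\,,
\end{align*}
which follows, e.g., from Gaussian concentration applied to the $1$-Lipschitz map $J\mapsto\|J\|$ combined with the Bai--Yin asymptotics $\E\|J\|=(2+o(1))\sqrt n$. Hence with high probability $\lambda(\G)\le(4+2\eta)\b$. Given $\d>0$ and $\b\in[0,1/4)$, I would fix $\eta>0$ small enough so that $(4+2\eta)\b<1$ and $[1-(4+2\eta)\b]^{-1}\le(1+\d)(1-4\b)^{-1}$. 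For all $n$ large enough one then has $1-\lambda(\G)\ge 1-(4+2\eta)\b\ge 1/n$, so Theorem \ref{mainthm2} applies and yields the strong approximate Shearer inequality for $\mu$ with constant $[1-(4+2\eta)\b]^{-1}\le(1+\d)(1-4\b)^{-1}$, as required.

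The argument is essentially mechanical once the Kronecker factorization is spotted, so the only nontrivial ingredient is the Wigner bound $\|J\|=(2+o(1))\sqrt n$ w.h.p. The main conceptual obstacle, which I do not see how to circumvent with this method, is the replacement of $\lambda_{\max}(J)$ by $\lambda_{\max}(J)-\lambda_{\min}(J)$ forced by the PSD requirement: this is precisely what produces the threshold $\b<1/4$ rather than the naive $\b<1/2$, and any improvement would seem to require a refinement of Theorem \ref{mainthm2} that avoids the PSD hypothesis on $\G$.
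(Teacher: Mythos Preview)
Your proof is correct and follows essentially the same approach as the paper: both shift $\Gamma$ by a diagonal term to make it positive semidefinite, invoke the standard spectral bound for the GOE matrix $n^{-1/2}J$ (eigenvalues in $[-2-\eta,2+\eta]$ w.h.p.) to get $\lambda(\Gamma)\le(4+2\eta)\beta$, and then apply Theorem~\ref{mainthm2}. The only cosmetic differences are that the paper uses a deterministic shift $\beta(2+\eta)$ rather than your data-dependent $c=\tfrac{\beta}{\sqrt n}\max(0,-\lambda_{\min}(J))$, and that you spell out the tensor factorization $\Gamma=\tfrac{\beta}{\sqrt n}J\otimes I_q$ and the check $1-\lambda(\Gamma)\ge 1/n$ explicitly; one tiny caveat is that your displayed equality for $\lambda(\Gamma)$ assumes $\lambda_{\min}(J)\le 0$, which holds w.h.p.\ anyway.
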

\begin{proof}
We recall that for any $\e>0$, the GOE random matrix $n^{-1/2}J_{i,j},\,i,j\in[n]$, with high probability as $n\to\infty$, has eigenvalues in the interval $[-2-\e,2+\e]$. Therefore, adding the diagonal term $\b(2+\e)\IND_{i=j}\IND_{a=b}$ one obtains the matrix 
\[\G(i,a;j,b):= \b(2+\e)\IND_{i=j}\IND_{a=b}+\b n^{-1/2}J_{i,j}\IND_{i\neq j} \IND_{a=b},\] with eigenvalues in $[0,\b(4+2\e)]$.
 Thus the result follows from Theorem \ref{mainthm2} by choosing $\e$ sufficiently small depending on $\d>0$ and $1-4\b>0$.
\end{proof}
When $q=2$ the Potts spin glass is called the Sherrington--Kirkpatrick model. In this case, for the same regime $\b<1/4$, tight bounds for the Glauber dynamics where obtained in \cite{EKZ2022,AJKPV2021,cheneldan}. Corollary \ref{cor3} uses essentially the same technique of \cite{cheneldan} to extend this to arbitrary block dynamics and to an arbitrary number of colors $q$. 

Next, we consider the Potts model on a graph $G$, that is the model defined by \eqref{spinsys} with the choice \eqref{eq:Potts}. Reasoning as in the above corollaries, the criterion in Theorem \ref{mainthm2} is easily seen to imply that if the parameter $\b\in\bbR$ is such that  $|\b|< (2\Delta)^{-1}$ then %, uniformly in the choice of the self potentials $\psi_i:[q]\mapsto\bbR$,  
	$\mu$ satisfies the strong approximate Shearer inequality with constant $C:=\left(1-2|\b|\D\right)^{-1}$, where $\D$ is the maximum degree of the graph $G$.  However, for graphical models much more 
precise results can be obtained by using the notion of spectral independence introduced in \cite{ALO}, together with results from \cite{CLV21,FGYZ2022} establishing the spectral independence property for such systems. In particular, for the Ising model
on a graph $G$, defined by \eqref{spinsys} with the choice \eqref{eq:Potts} and $q=2$, using the stochastic localization technique as in \cite{cheneldan}, we obtain the following result. Say that the system is in the {\em tree uniqueness regime with gap} $\d\in(0,1)$ 
if $\b\in\bbR$ satisfies
\begin{align}\label{eq:tU}
\exp(2|\b|) \le \frac{\D-\d}{\D-2+\d}
\quad \Leftrightarrow \quad
\tanh(|\beta|) \le \frac{1-\delta}{\Delta-1}
\,,
\end{align}
where $\D$ is the maximum degree of the graph $G$. 
\begin{theorem}\label{th:SpInd}
For the Ising model on a graph $G$ of maximum degree $\D\ge 3$, with $\b\in\bbR$ in the tree uniqueness regime with gap $\d\in(0,1)$, 
the strong approximate Shearer inequality holds with constant $C_\d=\mathrm{poly}(1/\d)$. In particular,  for any $\a\in\cW$, the $\a$-Gibbs sampler has mixing time $C_\d\, \g(\a)^{-1}\times O(\log n)$. 
%$C_\d=e^{O(1/\d)}$
\end{theorem}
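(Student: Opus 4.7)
The plan is to combine the stochastic localization machinery underlying Theorem \ref{mainthm2} with the spectral independence estimates available for the Ising model in the tree uniqueness regime \cite{CLV21,FGYZ2022}. This essentially adapts to arbitrary block dynamics the argument recently carried out for the Glauber dynamics in \cite{cheneldan} (weak coupling) and \cite{CCYZ25} (all the way up to the threshold, with polynomial constants).

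First I would introduce the Chen--Eldan stochastic localization $(\mu_t)_{t\ge 0}$ with $\mu_0=\mu$, in which $\mu_t(\sigma)\propto \exp(\langle \theta_t,\sigma\rangle)\mu(\sigma)$ for a vector-valued Brownian martingale $\theta_t$; equivalently, the self-potentials are perturbed by a random Gaussian field while the pair interactions stay frozen. Two features are crucial. First, for every $t$ and almost every realization of $\theta_t$, the tilted measure $\mu_t$ is still an Ising model on $G$ at the same inverse temperature $\beta$ and the same tree-uniqueness gap $\delta$, so any bound depending only on $\beta$, $G$, $\delta$ that holds for arbitrary external fields applies to $\mu_t$ uniformly. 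Second, if the localization is run with a (matrix-valued) schedule $C_t$, the effective pair-interaction matrix along the process takes the form $\Gamma_t=\Gamma-C_t$, which can be driven into the regime covered by Theorem \ref{mainthm2} once $t$ is large enough.

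Next I would use the standard entropy decomposition along the localization,
\[
\Ent_\mu f = \bbE\left[\Ent_{\mu_T}f\right] + \int_0^T \bbE[\Phi_s]\,\mathrm{d}s,
\]
where the integrand $\Phi_s$ is a quadratic form involving the covariance matrix of $\mu_s$, together with the appropriate block-dynamics variant needed to recover an $\alpha$-factorization of the form \eqref{sheaineq2} rather than the one-step Glauber entropy contraction. For the boundary term at time $T$, Theorem \ref{mainthm2} applied to $\mu_T$ produces an $\alpha$-entropy factorization with a constant depending only on $\delta$ and $T$. For the integral term, the spectral independence of the Ising model in the tree uniqueness regime, proved in \cite{CLV21,FGYZ2022}, yields $\lambda_{\max}(\mathrm{Cov}(\mu_s)) \le \mathrm{poly}(1/\delta)$ uniformly in the Gaussian tilt $\theta_s$, which is exactly what one needs to control $\bbE[\Phi_s]$. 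Integrating then gives an $\alpha$-entropy factorization for $\mu$ with $C_\delta = \mathrm{poly}(1/\delta)$, uniformly over $\alpha\in\cW$ and self-potentials $\psi_i$; the mixing time bound is then immediate from \eqref{fact_contr_mix}.

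The main obstacle is obtaining polynomial, rather than exponential, dependence on $1/\delta$ all the way up to the threshold. A naive scalar localization schedule yields a bound that degenerates as $\delta\to 0$, because one is forced to drive the effective coupling fully out of the critical window before Theorem \ref{mainthm2} becomes applicable. This is precisely the difficulty resolved in \cite{CCYZ25} by a carefully chosen matrix-valued schedule, calibrated against the polynomial blow-up rate of the spectral independence constant at the uniqueness threshold. I would transplant that schedule to our setting, replacing the Glauber-dynamics input used there by the block-dynamics statement of Theorem \ref{mainthm2}; because the latter holds in the \emph{strong} form, i.e.\ uniformly in the self-potentials and hence in the random tilt, this transplant requires only bookkeeping and no new probabilistic ideas.
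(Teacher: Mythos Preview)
Your proposal contains a genuine gap: you misidentify the stochastic localization process and therefore miss the mechanism that produces polynomial dependence on $1/\delta$.

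You write that under the Chen--Eldan localization ``the pair interactions stay frozen'' and $\mu_t$ is an Ising model ``at the same inverse temperature $\beta$ and the same tree-uniqueness gap $\delta$.'' This is not the process used here. The localization in Lemma~\ref{lem:ito} is driven by the matrix $2\Gamma$ itself, so that $F_t(\eta)\propto\exp(-t\scalar{\eta}{\Gamma\eta}+\scalar{y_t}{\eta})$ and, by \eqref{stoclocmeasure}, the measure $\rho_t$ is an Ising model on $G$ at the \emph{reduced} inverse temperature $(1-t)\beta$. A short computation (using convexity of $\tanh$ and the assumption \eqref{eq:tU}) shows this model is in the tree-uniqueness regime with the \emph{improved} gap $(\delta+t)/2$. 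The spectral independence results of \cite{CLV21} then give $\lambda(J[\cT_v\rho_t])=O(1/(\delta+t))$, and via Lemma~\ref{lem:covJ} the entropic-stability rate in Lemma~\ref{prop1} satisfies $a(t)\le c_0/(\delta+t)$, uniformly in $v$. Integrating gives $\int_0^1 a(t)\,dt=O(\log(1/\delta))$, and hence $\bbE[\ent_{\rho_1}f]\ge c_1\delta^{c_2}\ent_\rho f$. This is where the polynomial constant comes from.

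Because the gap improves along the flow, no matrix-valued schedule from \cite{CCYZ25} is needed, and there is no reason to stop at an intermediate time $T$ and invoke Theorem~\ref{mainthm2}: one simply runs the standard process to $t=1$, where $\rho_1$ is a product over sites and the exact Shearer inequality applies with constant $1$. Combined with Lemma~\ref{submarlemma} (which handles the block structure and is the only modification relative to the Glauber argument of \cite{cheneldan}), this gives the strong approximate Shearer inequality directly. Your additive decomposition $\Ent_\mu f=\bbE[\Ent_{\mu_T}f]+\int_0^T\bbE[\Phi_s]\,ds$ is not the relevant estimate; the argument goes through the multiplicative entropic-stability bound of Lemma~\ref{prop1} instead.
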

Theorem \ref{th:SpInd} was established in \cite{BCCPSV22} with $C_\d$ replaced by a constant growing exponentially in $\D$, thus providing tight bounds for graphs with bounded degrees. Here, there is no dependence on $\D$ for the constant $C_\d$ and therefore the result yields the desired bounds in the setting of graphs with unbounded degrees. %provided the spin system is in the uniqueness regime. 
The above result is optimal since it is known that outside of the uniqueness regime one cannot expect such rapid mixing estimates to hold  \cite{SS14}. If one restricts to the Glauber case of one-point factorizations, that is for $\a_A=\frac1n\IND_{|A|=1}$, the above estimate is established in \cite{cheneldan} with a constant $C_\d=\mathrm{exp}(1/\d)$; see also \cite{MosselSly} for an earlier proof of $O(n\log n)$ mixing of Glauber dynamics for Ising models in the tree uniqueness regime. 
Finally, we note that a related result can be obtained for the  Potts model, that is with $q\ge 3$; see Remark \ref{rem:SpIndPotts} below. However, unlike the Ising
model, for the Potts model the bounds do not reach the tree uniqueness threshold, and do not provide an optimal $\D$-dependence. 

When $|\beta| = \beta_c(\Delta) := \frac{1}{2} \ln(\frac{\Delta}{\Delta-2})$, i.e., for the critical (either ferromagnetic or antiferromagnetic) Ising model, using a refined covariance estimate from \cite{CCYZ25}, we also establish the strong approximate Shearer inequality. As in Theorem~\ref{mainthm2}, the constant $C = C_n$ in the inequality is a polynomial in $n$ rather than an absolute constant.
We note that this is necessary as the inverse spectral gap of the Glauber dynamics in this case is super linear, see \cite{CCYZ25}.
\begin{theorem}\label{th:critical-Ising}
	For the critical Ising model on a graph $G$ of maximum degree $\D\ge 3$, with $|\b| = \beta_c(\Delta):= \frac{1}{2} \ln(\frac{\Delta}{\Delta-2})$,
	the strong approximate Shearer inequality holds with constant $C_n = O(n^{1+\frac{2}{\Delta-2}})$. In particular,  for any $\a\in\cW$, the $\a$-Gibbs sampler has mixing time $C_n\, \g(\a)^{-1}\times O(\log n)$. 
\end{theorem}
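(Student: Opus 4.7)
The plan is to follow the stochastic localization scheme that produced Theorem \ref{mainthm2} and Theorem \ref{th:SpInd}, substituting the critical covariance estimate from \cite{CCYZ25} in place of the sub-critical (gap-dependent) one. Roughly, for an Ising measure $\mu$ on $G$, the strong approximate Shearer constant is bounded through the Chen--Eldan localization identity by a quantity controlled by $\sup_t \|\mathrm{Cov}(\mu_t)\|_{\mathrm{op}}$ along the Gaussian localization trajectory, where each $\mu_t$ is itself an Ising measure on $G$ at the same inverse temperature $\beta$ but with tilted external fields. At criticality the gap $\delta$ degenerates, but a polynomial covariance bound is still available, and this is exactly what is needed.

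First I would cast the critical Ising model into the form \eqref{spinsys} with a positive-semidefinite $\Gamma$ by absorbing a constant diagonal shift (as in the proof of Corollary \ref{cor2}), and set up the same stochastic localization process used in the proofs of Theorems \ref{mainthm2} and \ref{th:SpInd}. Since the localization tilt acts at each time $t$ precisely as a shift of the self-potentials $\psi_i$, the tilted measure $\mu_t$ is again a critical Ising model on $G$ with external field, and the ``strong'' version of the approximate Shearer inequality I am proving requires uniformity in such external fields; the bound must therefore be applied to the entire family of critical Ising measures on $G$.

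The key input is the refined covariance estimate from \cite{CCYZ25}, which asserts that the critical Ising model on a graph of maximum degree $\Delta \ge 3$ satisfies
\begin{equation}
\lambda_{\max}\bigl(\mathrm{Cov}(\mu_t)\bigr) \;=\; O\bigl(n^{2/(\Delta-2)}\bigr)
\end{equation}
uniformly in the external fields and hence uniformly in $t$. Inserting this bound into the half-step contraction argument underlying Theorem \ref{mainthm2} yields the strong approximate Shearer inequality with constant $C_n = O(n^{1+2/(\Delta-2)})$: the extra factor of $n$ arises from the time integral in the localization identity, exactly as the $\delta=0$ regime of Theorem \ref{mainthm2} contributes the prefactor $n$ in $C = n e^{1-\delta n}$, while the covariance bound contributes the $n^{2/(\Delta-2)}$ factor. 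The mixing-time statement for the $\alpha$-Gibbs sampler then follows from \eqref{fact_contr_mix}.

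The main obstacle is verifying that the quoted covariance bound from \cite{CCYZ25} is genuinely uniform along the entire localization trajectory, i.e., uniform with respect to arbitrary real-valued tilts of the external field rather than merely with respect to boundary pinnings. The Chen--Eldan localization introduces a Gaussian tilt that may be large and of arbitrary sign, so one must check that \cite{CCYZ25} provides the bound in the required ``strong'' (field-uniform) form; with that in hand, the remaining calculation is a direct adaptation of the argument for Theorem \ref{th:SpInd} with the gap-dependent bound replaced by the polynomial bound.
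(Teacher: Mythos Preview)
There is a genuine gap in your mechanism. You assert that ``the localization tilt acts at each time $t$ precisely as a shift of the self-potentials $\psi_i$, the tilted measure $\mu_t$ is again a critical Ising model on $G$ with external field''. This is not what the process does: by \eqref{stoclocmeasure} the localized measure $\rho_t$ has interaction $(1-t)\Gamma$, i.e.\ it is an Ising model at inverse temperature $(1-t)\beta$ with shifted fields. For every $t>0$ this is \emph{strictly subcritical}, and this $t$-dependence is precisely what makes the argument go through. Your inference ``uniformly in the external fields and hence uniformly in $t$'' conflates two different parameters.

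This matters quantitatively. The Shearer constant produced by the localization argument is $C_n=\exp\!\big(\int_0^1 a(t)\,dt\big)$ with $a(t)=2\lambda(\Gamma)\kappa(t)$; see Lemma~\ref{prop1} and Section~\ref{sec:proofsec}. A covariance bound $\kappa(t)=O(n^{2/(\Delta-2)})$ that is uniform in $t$, combined with $\lambda(\Gamma)=O(1)$, yields $\int_0^1 a(t)\,dt=O(n^{2/(\Delta-2)})$ and hence $C_n=\exp\!\big(O(n^{2/(\Delta-2)})\big)$, which is super-polynomial. The ``extra factor of $n$'' in Theorem~\ref{mainthm2} at $\delta=0$ does not come from a time integral multiplying a covariance bound; it comes from $\exp(\log n)$, i.e.\ from the fact that the $t$-dependent bound $\kappa(t)=\min\{\tfrac1{2t},\tfrac n2\}$ has logarithmic integral. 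The paper's proof works the same way: it invokes from \cite{CCYZ25} the time-dependent estimate
\[
\int_0^1 a(t)\,dt \;=\; O(1)+\tfrac{\Delta}{\Delta-2}\log n,
\]
which exploits that for $t>0$ the system has gap of order $t$ (as in the proof of Theorem~\ref{th:SpInd}) so that $a(t)=O(1/t)$ away from $t=0$, with the critical covariance bound controlling only a short initial interval. Exponentiating then gives $C_n=O(n^{\Delta/(\Delta-2)})=O(n^{1+2/(\Delta-2)})$. Your proposal is missing exactly this $t$-dependent decay of $\kappa(t)$.
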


For the critical Ising model, the constant $C_n$ from the strong approximate Shearer inequality is at best $\Omega(\sqrt{n})$, 
which was shown in \cite{CCYZ25} for the Glauber dynamics and is witnessed by
an infinite sequence of graphs of maximum degree at most $\Delta$, for any $\Delta \ge 3$.
The $\Omega(\sqrt{n})$ bound is tight for the critical mean field Ising model (i.e., $G$ is a complete graph and $\Delta = n-1$), as discussed before Corollary~\ref{cor2-Ising}.
It is thus tempting to ask if one could improve the constant in Theorem~\ref{th:critical-Ising} to $C_n = O(\sqrt{n})$.
%, even just for the Glauber dynamics.

%\begin{corollary}\label{cor1}
%	Let $\mu$ be
%	 the ferromagnetic %mean field 
%$q$-Potts  model on a finite graph $G=(V,E)$. If $\b < (2\Delta)^{-1}$ then %, uniformly in the choice of the self potentials $\psi_i:[q]\mapsto\bbR$,  
%	$\mu$ satisfies the strong approximate Shearer inequality with constant $C:=\left(1-2\b\D\right)^{-1}$.
%\end{corollary}
%
%\begin{proof}
%To ensure that $\G$ is positive semidefinite we add a constant diagonal term  $\phi_{i,i}(a,b):=\b\D\IND_{a=b}$ to the interaction, which does not alter the distribution. Thus the matrix $\G$ in \eqref{def:Gamma} becomes $\G(i,a;j,b):= \beta\Delta \IND_{i=j}\IND_{a=b}+\beta A(G)_{i,j}\IND_{a=b}$. The maximum eigenvalue of this matrix can be easily bounded by $2\b\D$ and thus the result follows from Theorem \ref{mainthm2}.
%\end{proof}

\subsection{Generalizations to a multicomponent system}
Here we consider an extension of Theorem \ref{mainthm2} to a spin system consisting of several weakly interacting components, each of which is a generic spin system. %An application of the result 
%to a model that arises naturally as the linear mean field approximation for a nonlinear dynamics will be discussed.

Let $n\in\bbN$ be given and suppose that for each $i=1,\dots,n$ we have a probability measure $\mu_i\in\cP(\O_i)$, where $\O_i=[q_i]^{L_i}$, where $q_i\geq 2$ and $L_i\ge 1$ are given integers. The measures $\mu_i$ can be interpreted as spin systems, but they do not need to have the form \eqref{spinsys}, and are in fact allowed to be arbitrary elements of $\cP(\O_i)$. For instance, one may have distributions with hard constraints.
We call $\mu_i$ the $i$-th component.  A configuration of the $i$-th component is a spin configuration $\si_i = (\si_i(1),\dots,\si_i(L_i))$. Next we add some interaction between the components and within the components. 
Thus, we consider the spin system $\mu\in\cP(\O)$, where $\O=\times_{i=1}^n\O_i$, defined by
 \begin{align}\label{spinsysgen}
\mu(\si) \,\propto\, \exp\left(\textstyle{\sum_{(i,\ell),(j,k)}}\varphi_{i,\ell;j,k}(\si_i(\ell),\si_j(k))+\textstyle{\sum_{(i,\ell)}}\psi_{i,\ell}(\si_i(\ell)\right)\prod_{i=1}^n\mu_i(\si_i)\,,
\end{align}
for some pair potentials $\varphi_{i,\ell;j,k}:[q_i]\times[q_j]\mapsto \bbR$, satisfying the symmetry $ \phi_{i,\ell;j,k}(a,b)=\phi_{j,k;i,\ell}(b,a)$ and for some self potentials $\psi_{i,\ell}:[q_i]\mapsto\bbR$. Here and below it is understood that the sum over $(i,\ell)$ is such that $i$ runs from $1$ to $n$, and for each fixed $i$, $\ell$ runs from $1$ to $L_i$, and similarly for the sums over $(j,k)$.
Define the interaction matrix
\begin{align}\label{def:Gammagen}
\G((i,\ell),a;(j,k),b) = \phi_{i,\ell;j,k}(a,b)\,.
\end{align}
Here $i,j\in[n],\ell\in[L_i],k\in[L_j]$ and $a\in[q_i],b\in[q_j]$. Thus, $\G$ is a $M\times M$ symmetric matrix, where $M:=q_1L_1 + \cdots q_nL_n$.  We write $\l(\G)$ for the maximum eigenvalue of $\G$. We also need the following notion of covariance for the component measures $\mu_i$. Let $\mu_i^\psi\in\cP(\O_{i})$ denote the probability measure \[
\mu_i^\psi(\si_i)\propto \mu_i(\si_i)\exp\big({\textstyle{\sum_\ell}\psi_{i,\ell}(\si_i(\ell))}\big),\] where $\psi_{i,\ell}:[q_i]\mapsto\bbR$, $\ell\in[L_i]$,  are arbitrary self-potentials. We call $\Sigma^\psi_i(\ell,a;k,b)$, $\ell,k\in[L_i]$, $a,b\in[q_i]$, the $q_iL_i\times q_iL_i$ matrix defined by
\begin{align}\label{def:R_i}
\Sigma_i^\psi(\ell,a;k,b) = \mu_i^\psi(\IND_{\{\si_i(\ell)=a\}}\IND_{\{\si_i(k)=b\}}) - \mu_i^\psi(\IND_{\{\si_i(\ell)=a\}}) \mu_i^\psi(\IND_{\{\si_i(k)=b\}})  \,.
\end{align}
Letting $\l(\Sigma_i^\psi)$ denote the maximum eigenvalue of $\Sigma_i^\psi$ we define 
\begin{align}\label{def:R}
R:=2\max_i\sup_{\psi}\l(\Sigma_i^\psi).
\end{align}  It is not hard to check that when $\mu_i$ is a product measure over $[q_i]^{L_i}$, then $R=1$, see Lemma \ref{lem:Rlemma} below. 
 The first result allows us to decouple the $n$ components provided the interaction is suitably small compared to $R$. We write again $\cW$ for the set of probability vectors $\a=(\a_A,\,A\subset[n])$, and use the notation $\ent_A f$ for the entropy of $f$ with respect to the conditional distribution $\mu_A$ obtained from $\mu$, by freezing all component configurations $\si_j$, $j\notin A$.  
  
\begin{theorem}\label{mainthm3} 
	Suppose that $\Gamma$ is positive semidefinite and that $R\l(\G)<1$.	
	Then for all $\alpha\in\cW$, %for any choice of the single components $\mu_i\in\cP(\O_i)$, 
	the spin system $\mu$ in \eqref{spinsysgen} satisfies, 	for all functions $f\ge 0$, 
	\begin{align}\label{sheaineq2s}
		\gamma(\alpha) \,\ent f \leq \left(1-R\l(\G)\right)^{-1}%\frac1{1-\l(\G)}
		\textstyle{\sum_{A \subset [n]}}\alpha_A\,\mu\left(\ent_Af\right)\,.
	\end{align} 
	%That is, $\mu$ satisfies the approximate Shearer inequality with constant $C:=\left(1-\l(\G)\right)^{-1}$.
\end{theorem}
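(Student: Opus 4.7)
The plan is to adapt the stochastic localization argument of Chen--Eldan \cite{cheneldan}, which the paper has used to prove Theorem \ref{mainthm2}, to the multicomponent setting of \eqref{spinsysgen}. The strategic point is that the base measure $\prod_{i=1}^n \mu_i$ is now only a product \emph{across} the $n$ components, so the quantity $R$ from \eqref{def:R} will replace, throughout the argument, the trivial single-spin covariance constant $1$ that sufficed in Theorem \ref{mainthm2}. This is precisely why the threshold $\lambda(\Gamma)\le 1-\delta$ there becomes $R\lambda(\Gamma)<1$ here, with factorization constant $(1-R\lambda(\Gamma))^{-1}$.

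Encode the single-spin indicators by $x(\sigma)\in\{0,1\}^M$ with entries $\IND_{\sigma_i(\ell)=a}$, so that the pair interaction in \eqref{spinsysgen} reads $x(\sigma)^T\Gamma x(\sigma)$. I would introduce the Chen--Eldan localization
\[
\mu_t(\sigma)\;\propto\;\mu(\sigma)\,\exp\!\Big(\langle y_t,x(\sigma)\rangle\,-\,t\,x(\sigma)^T\Gamma\,x(\sigma)\Big),\qquad t\in[0,1],
\]
driven by a Brownian motion $y_t$ of covariance proportional to $\Gamma$ with the usual drift compensator making $(\mu_t)$ a measure-valued martingale starting from $\mu_0=\mu$. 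The point of this particular tilt is that at $t=1$ the quadratic correction cancels the full $\Gamma$-interaction of \eqref{spinsysgen}, so that conditionally on the driving noise the terminal measure is a product over components of the form $\mu_1=\prod_{i=1}^n \mu_i^{\psi^{(1)}_i}$, with random self-potentials $\psi^{(1)}_i$ built from $y_1$ and $\psi$.

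It\^o's formula combined with \eqref{eq:deco} along the localization, exactly as in the proof of Theorem \ref{mainthm2}, would then yield an identity of the form
\[
\ent_\mu f\;=\;\E\big[\ent_{\mu_1}f\big]\;+\;\E\!\int_0^1 \Phi_t\,\dx t,
\]
where $\Phi_t\ge 0$ is a nonnegative quadratic form built from $\Gamma$ and the covariance of $x(\sigma)$ under $\mu_t$. Since $\mu_1$ is conditionally a product across components, the classical Shearer inequality for product measures gives $\g(\alpha)\,\ent_{\mu_1}f\le\sum_{A\subset[n]}\alpha_A\,\mu_1(\ent_A f)$, and by the martingale property $\E[\mu_1(\ent_A f)]=\mu(\ent_A f)$ this produces the right-hand side of \eqref{sheaineq2s} for the terminal contribution. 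It then remains to control the dissipation $\E\int_0^1\Phi_t\,\dx t$ by the same right-hand side up to a factor $R\lambda(\Gamma)$; a standard Grönwall bookkeeping in $t$ identical to that of Theorem \ref{mainthm2} would yield the announced constant $(1-R\lambda(\Gamma))^{-1}$.

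The main step that I anticipate as the real obstacle is the uniform covariance estimate driving $\Phi_t$. For any $t\in[0,1]$ and any block $A\subset[n]$, conditioning $\mu_t$ on the components in $A^c$ would turn the restriction to $\sigma_A$ into a Gibbs modification of $\prod_{i\in A}\mu_i$ by the residual interaction $(1-t)\Gamma_{AA}\succeq 0$ and by some effective single-spin fields $\tilde\psi$ depending on $y_t$ and on the boundary configuration. By the definition \eqref{def:R} of $R$ as a supremum over self-potentials, the within-component covariance of this conditional measure is bounded by $R/2$, which is what is needed to bound $\Phi_t$ by $R\lambda(\Gamma)\sum_A\alpha_A\,\mu_t(\ent_A f)$. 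The subtlety is that at intermediate $t$ the measure $\mu_t$ still couples components via $(1-t)\Gamma$, so only $\Gamma$-weighted covariances can be bounded directly; the supremum over $\psi$ in \eqref{def:R} is exactly what absorbs the effective random fields produced along the localization, after which the remaining bookkeeping parallels the argument for Theorem \ref{mainthm2} with $R$ in place of the trivial product-measure constant.
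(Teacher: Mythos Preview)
Your high-level plan---run the Chen--Eldan localization so that the $\Gamma$-interaction vanishes at $t=1$, apply Shearer to the resulting product over components, and feed a covariance bound involving $R$ into the entropic-stability/Gr\"onwall machinery of Lemma~\ref{prop1}---is exactly the paper's plan. Two of your concrete steps, however, do not go through as written.

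First, the claim ``by the martingale property $\E[\mu_1(\ent_A f)]=\mu(\ent_A f)$'' is false: the quantity $\mu_1(\ent_A f)=\sum_\eta \rho_1(\eta)\,\ent_{\rho_{1,A}}f(\eta)$ is \emph{not} linear in $\rho_1$, because the conditional law $\rho_{1,A}$ itself depends on $\rho_1$. What is true, and what suffices, is the one-sided inequality $\E[\mu_1(\ent_A f)]\le\mu(\ent_A f)$, obtained not from the martingale property alone but from the joint convexity of $(x,y)\mapsto x\log(x/y)$ applied to the pairs $(\rho_t(f,\eta^{A^c}),\rho_t(\eta^{A^c}))$; this is Lemma~\ref{submarlemma}. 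So this step is repairable once you invoke the right tool.

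The substantive gap is the covariance input. Lemma~\ref{prop1} requires a bound on $\lambda(\cov[\cT_v\rho_t])$ for the \emph{full} tilted measure, uniformly in $v$; block-conditioned within-component covariances are not the relevant object, and your proposed control of $\Phi_t$ by $R\lambda(\Gamma)\sum_A\alpha_A\,\mu_t(\ent_A f)$ is neither what the argument uses nor apparently true. The definition of $R$ only bounds the covariance of a \emph{single} component $\mu_i^\psi$ under linear tilts with \emph{no} pair interaction; at time $t<1$ the residual coupling $(1-t)\Gamma$ is still present and the covariance of $\cT_v\rho_t$ is not $\le R/2$---it diverges as $R\lambda(\Gamma)\uparrow 1$. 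The paper gets the correct bound $\kappa(t)=\tfrac{R}{2}(1-(1-t)R\lambda(\Gamma))^{-1}$ in Lemma~\ref{pottscovlemmagen} via the Bauerschmidt--Bodineau Gaussian decomposition of $e^{(1-t)\langle\eta,\Gamma\eta\rangle}$ together with the Brascamp--Lieb inequality; $R$ enters twice there, once as the uniform bound on $\cov[\cT_x\nu]$ for the product base $\nu=\otimes_i\rho_i$, and once through the Hessian of $\log Z$. With this $\kappa(t)$ one has $a(t)=2\lambda(\Gamma)\kappa(t)$ and $\int_0^1 a(t)\,dt=-\log(1-R\lambda(\Gamma))$, whence Lemma~\ref{prop1} gives $\E[\ent_{\rho_1}f]\ge(1-R\lambda(\Gamma))\,\ent_\rho f$ directly; combined with Shearer at $t=1$ and Lemma~\ref{submarlemma} this yields \eqref{sheaineq2s}. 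There is no separate bookkeeping of the dissipation against block entropies.
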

Note that the above inequality is not an approximate Shearer inequality since only factorizations with blocks of the form
$\{\si_i(\ell), \,i\in A, \ell=1,\dots,L_i\}$ are considered. That is, we are not saying anything about factorizations within a single component, but only between components. This is natural since we are not assuming anything about the single components $\mu_i$. We interpret  
\eqref{sheaineq2s} as a factorization of entropy into blocks of components. 

%One motivation for Theorem \ref{mainthm3}  is that an application to a model where each 
As an example, suppose each $\mu_i$ is a conservative spin system with $q_i\equiv2$ of the form of a product of Bernoulli measures conditioned on their sum. In this case, we show that $R\le 2$; see Lemma \ref{lem:Rlemma2}. Then, an application of Theorem \ref{mainthm3} shows that as soon as $\l(\G)<1/2$ one can factorize the entropy into blocks of components. More generally, for any fixed number of colors $q_i\equiv q$, if the $\mu_i$ are each of the form of a product of arbitrary single spin distributions conditioned on the number of occurrences for each color $a\in[q]$,  using results for partition-constrained strongly Rayleigh distributions recently derived in \cite{alimohammadi2021fractionally}, one can show that $R\le R_0(q)$, where $R_0(q)$ is independent of the systems sizes $L_i$, and therefore one can obtain factorizations for weakly interacting components in such cases, as soon as $\l(\G)<1/R_0(q)$. %, see Corollary \ref{cor:apply} below for more details. 
These facts can be used to obtain tight entropy production estimates for a particle system approximating the nonlinear Kac-Boltzmann type dynamics for the Ising model studied in \cite{CapSin}. The detailed application to such nonlinear dynamics will be discussed elsewhere.  

The next result addresses the question of a full Shearer-type bound under the assumption that the single components satisfy such an estimate. 
%For any $i\in[n]$, we say that the component $\mu_i$ satisfies the strong approximate Shearer inequality with constant $C_i$ if the tilted measure $\mu_i^\psi(\si_i) \propto\mu_i(\si_i)e^{\sum_{\ell}\psi_\ell(\si_i(\ell))}$
%satisfies the approximate Shearer inequality with constant $C_i$ uniformly in the choice of single site potentials $\psi=(\psi_\ell)$, $\psi_\ell:[q_i]\mapsto\bbR$. Similarly, we say that the full system $\mu$ in \eqref{spinsysgen}
%satisfies the strong approximate Shearer inequality with constant $C$ if the tilted measure $\mu^\psi$ obtained by adding a self-potential term $\sum_{(i,\ell)}\psi_{i,\ell}(\si(\ell))$ to the exponential in   \eqref{spinsysgen}
%satisfies the approximate Shearer inequality with constant $C$ uniformly in the choice of $\psi=(\psi_{i,\ell})$, $\psi_{i,\ell}:[q_i]\mapsto\bbR$.
\begin{theorem}\label{mainthm4} 
	Suppose that $\Gamma$ is positive semidefinite and that $R\l(\G)<1$. Suppose further that each component $\mu_i$ satisfies the strong approximate Shearer inequality with constant $C_i$. 	
	Then the system $\mu$ in \eqref{spinsysgen} satisfies the strong approximate Shearer inequality with constant $C:=\left(1-R\l(\G)\right)^{-1}\max_iC_i$. Moreover, the above statement continues to hold if we replace $R$ with $\max_iC_i$. 
%	\begin{align}\label{sheaineq2}
%		\gamma(\alpha) \,\ent f \leq \left(1-\l(\G)\right)^{-1}%\frac1{1-\l(\G)}
%		\sum_{A \subset V}\alpha_A\,\mu\left(\ent_Af\right)\,,
%	\end{align}
%	holds for all functions $f\ge 0$. 
%	%That is, $\mu$ satisfies the approximate Shearer inequality with constant $C:=\left(1-\l(\G)\right)^{-1}$.
\end{theorem}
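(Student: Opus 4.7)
The plan is to combine Theorem \ref{mainthm3} (the between-component factorization) with the within-component strong approximate Shearer inequalities, so that the target constant $(1-R\lambda(\Gamma))^{-1}\max_iC_i$ appears as a product of the two stages.

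Fix $\alpha\in\cW$ on subsets of $V=\sqcup_iV_i$ and $f\ge 0$. For each $B\subset V$ write $B=\sqcup_iB_i$ with $B_i=B\cap V_i$, and let $\pi(B)=\{i:B_i\neq\emptyset\}$. Define the marginal weights $\alpha^{(i)}$ on subsets of $V_i$ by $\alpha^{(i)}_{B_i}=\sum_{B:B\cap V_i=B_i}\alpha_B$; a direct check gives $\gamma(\alpha^{(i)})\ge\gamma(\alpha)$. First, apply Theorem \ref{mainthm3} to $\mu$ with single-component weights $\beta_{\{i\}}=1/n$ on $[n]$ to get
\begin{align*}
\Ent f\le (1-R\lambda(\Gamma))^{-1}\sum_i\mu(\Ent_{V_i}f).
\end{align*}
Next, for each $i$ observe that the conditional measure $\mu_{V_i}^\tau$ equals $\mu_i$ with self-potentials modified by the frozen outside configuration. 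Hence by the strong approximate Shearer hypothesis for $\mu_i$---which by definition holds uniformly in self-potentials---applied with the weights $\alpha^{(i)}$,
\begin{align*}
\gamma(\alpha)\,\mu(\Ent_{V_i}f)\le \gamma(\alpha^{(i)})\,\mu(\Ent_{V_i}f)\le C_i\sum_{B_i\neq\emptyset}\alpha^{(i)}_{B_i}\,\mu(\Ent_{B_i}f).
\end{align*}
Summing over $i$ and regrouping by the original block $B$,
\begin{align*}
\gamma(\alpha)\,\Ent f\le \frac{\max_iC_i}{1-R\lambda(\Gamma)}\sum_B\alpha_B\sum_{i\in\pi(B)}\mu(\Ent_{B_i}f).
\end{align*}
The remaining step is to replace $\sum_{i\in\pi(B)}\mu(\Ent_{B_i}f)$ by $\mu(\Ent_Bf)$ without picking up a factor of $|\pi(B)|$ that the naive chain-rule monotonicity $\mu(\Ent_{B_i}f)\le\mu(\Ent_Bf)$ would give. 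This is accomplished by a local reapplication of Theorem \ref{mainthm3} to the conditional measure $\mu_B^\tau$: it is itself of the form \eqref{spinsysgen} on $\sqcup_{i\in\pi(B)}B_i$, with principal-submatrix interaction $\Gamma_B$ (so $\lambda(\Gamma_B)\le\lambda(\Gamma)$) and with restricted components inheriting the same Shearer constants.

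For the second claim, I would prove $R\le\max_iC_i$ and substitute into the first. The strong approximate Shearer for $\mu_i^\psi$ with constant $C_i$ implies, via its Glauber instance, the modified log-Sobolev inequality with constant $1/C_i$ and hence the Poincar\'e inequality with the same constant, uniformly in $\psi$. Applied to arbitrary linear combinations $g=\sum_{\ell,a}c_{\ell,a}\IND_{\sigma_i(\ell)=a}$, the Poincar\'e inequality bounds $c^\top\Sigma_i^\psi c=\var_{\mu_i^\psi}(g)$ by $C_i$ times a simple diagonal quadratic form in $c$, yielding $\lambda(\Sigma_i^\psi)\le C_i/2$ and therefore $R=2\max_i\sup_\psi\lambda(\Sigma_i^\psi)\le\max_iC_i$. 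The main obstacle is the combinatorial accounting in the first claim, and in particular the last step above: producing the clean product $(1-R\lambda(\Gamma))^{-1}\max_iC_i$ without spurious losses, for which the uniformity of the strong Shearer property over self-potentials (hence its stability under conditioning) is essential.
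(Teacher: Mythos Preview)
Your proof of the first statement has a genuine gap at the final step. After correctly arriving at
\[
\gamma(\alpha)\,\Ent f\le \frac{\max_iC_i}{1-R\lambda(\Gamma)}\sum_B\alpha_B\sum_{i\in\pi(B)}\mu(\Ent_{B_i}f),
\]
you need the subadditivity $\sum_{i\in\pi(B)}\mu(\Ent_{B_i}f)\le\mu(\Ent_Bf)$. This holds for product measures but not for the interacting $\mu$. Your proposed fix, reapplying Theorem~\ref{mainthm3} to $\mu_B^\tau$, gives the inequality in the \emph{wrong direction}: Theorem~\ref{mainthm3} with singleton component weights yields $\Ent_{\mu_B^\tau}f\le(1-R_B\lambda(\Gamma_B))^{-1}\sum_{i\in\pi(B)}\mu_B^\tau(\Ent_{B_i}f)$, which upper-bounds $\mu(\Ent_Bf)$ by the sum, not the reverse. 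Even if one could reverse it at the cost of a constant, the resulting bound would be $(1-R\lambda(\Gamma))^{-2}\max_iC_i$, not the claimed $(1-R\lambda(\Gamma))^{-1}\max_iC_i$. There is also a secondary issue: the ``restricted components'' $\mu_i$ conditioned on $\tau_{V_i\setminus B_i}$ are genuine conditionals, not tilts by self-potentials, so neither their Shearer constant nor their contribution to $R_B$ is controlled by the hypotheses.

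The paper avoids this entirely by not using Theorem~\ref{mainthm3} as a black box. Instead it runs the stochastic localization process to time $t=1$, producing a \emph{random product measure} $\rho_1=\prod_i\rho_{1,i}$ where each $\rho_{1,i}$ is a tilt of $\mu_i$ by a random linear field. A separate tensorization lemma (Lemma~\ref{lem:ShearTenso}) shows, via a convexity argument in the spirit of \cite[Lemma 3.2]{CP2021}, that a product of measures each satisfying approximate Shearer with constant $C_i$ satisfies it with constant $\max_iC_i$; this is precisely where the subadditivity you need is available, because $\rho_1$ is a genuine product. The entropic stability estimate (Lemma~\ref{prop1}) together with the covariance bound (Lemma~\ref{pottscovlemmagen}) and the submartingale property (Lemma~\ref{submarlemma}) then transfers the Shearer inequality from $\rho_1$ back to $\rho$ at the cost of the factor $(1-R\lambda(\Gamma))^{-1}$. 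The point is that the combining-across-components step must be done \emph{after} decoupling, not before.

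Your argument for the second statement, $R\le\max_iC_i$, is essentially correct and matches the paper's Lemma~\ref{lem:Rlemma}: linearize the Glauber instance of the strong Shearer inequality to a Poincar\'e-type bound, apply it to linear observables, and use the single-site covariance bound (Lemma~\ref{lem:cov}) to control the local variances.
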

Theorem \ref{mainthm4} is a generalization of Theorem \ref{mainthm2}, since if $\mu_i$ is a product measure over $[q_i]^{L_i}$ for each $i$, then we know that %$R=1$ (see Lemma \ref{lem:Rlemma}) and that 
each component satisfies the Shearer inequality with $C_i\equiv 1$ (see the comment after Definition \ref{def:ent_fact}), and thus, in the special case where $q_i\equiv q$,  we recover the statement of  Theorem \ref{mainthm2}. The fact that the statement of Theorem \ref{mainthm4} holds also with $R$ replaced by  $\max_iC_i$ is a consequence of the estimate $R\le\max_iC_i$, which we establish in Lemma \ref{lem:Rlemma} below. 

We remark that Theorem \ref{mainthm4} is nontrivial even in the case of non-interacting components, that is when $\G\equiv 0$. We refer to \cite[Lemma 3.2]{CP2021} for a closely related tensorization lemma in the non-interacting case. By allowing some weak interaction between %and within 
the components, here we establish a considerable generalization of that result.
%We note that bounds on $R$ can be obtained if some correlation inequalities are known for the components $\mu_i$. For instance, in  Lemma \ref{lem:Rlemma2} we discuss the case of product measure conditioned on their sum. 
%Moreover, it is not difficult to check that when each component $\mu_i$ satisfies the strong approximate Shearer inequality with constant $C_i$, one has $R\le\max_iC_i$, see Lemma \ref{lem:Rlemma} below. 
As an example of application, suppose that each component $\mu_i$ is a Curie-Weiss model as in \eqref{MFqPotts}, with $\beta<1$. Then, from Corollary \ref{cor2} we know that each $\mu_i$ satisfies the strong approximate Shearer inequality with constant $C_1=(1-\b)^{-1}$. %Moreover, it is not difficult to check that $R=O((1-\b)^{-1})$ in this case, see Lemma \ref{lem:Rlemma} below. 
Thus, from Theorem \ref{mainthm4} we deduce that a collection of weakly interacting Curie-Weiss models also satisfies the strong approximate Shearer inequality with a constant $C=C_1(1-C_1\l(\G))^{-1}$ provided $C_1\l(\G)<1$.

\section{Proof of Theorem \ref{mainthm2}, Theorem \ref{th:SpInd}, and Theorem \ref{th:critical-Ising}}

\subsection{Reduction to binary spins}\label{sec:reduction}
The first step is a change of variables which maps the spin system on $\O=[q]^n$ to a spin system on $\{0,1\}^{qn}$.
We define a new single spin space $\hat{\Om}_0 = \left\{x\in\{0,1\}^{q}:\, \sum_{a=1}^qx_{a} =1\right\}$. Clearly, $[q]$ is in one to one correspondence with $\hat{\Om}_0$
and $\O$ is in one to one correspondence with $\Om_0:=(\hat{\Om}_0)^n$. To formalize this, we use the map $\O\ni\si=(\si_1,\dots,\si_n) \mapsto \eta=(\eta^1,\dots,\eta^n)$, where each $\eta^i\in\hat{\Om}_0$ is such that $\eta^i_a = \IND_{\si_i=a}$ for all $a\in[q]$. 
The uniform distribution over $\O$ is equivalent to the uniform distribution over $\O_0$. 
For $x,y\in\bbR^{qn}$ we use the notation $x=(x^i_a, i\in[n], a\in[q]), y=(y^i_a, i\in[n], a\in[q])$,  and 
\[
\scalar{x}{y} = \sum_{i=1}^n\sum_{a=1}^qx^i_ay^i_a.
\]
The probability measure $\mu$ from \eqref{spinsys}, in the new variables $\eta\in\O_0$ becomes the probability $\r\in \cP(\O_0)$ given by
\begin{align}\label{isingmatr}
	\rho(\eta) \propto \exp\left(\scalar{\eta}{\G\eta}+\scalar{h}{\eta}\right)\,\ind_{\{\eta \in \Om_0\}}\,,
\end{align}
where $\G(i,a;j,b) = \phi_{i,j} (a,b)$ is the matrix defined in \eqref{def:Gamma},  and $h\in\bbR^{qn}$, defined by $h^i_a := \psi_i(a)$, are the new external fields. 
The advantage of this expression over \eqref{spinsys} is that the interaction is given by a quadratic form and the self potentials now act linearly in the spin variables $\eta^i_a$. The price to pay is that the uniform distribution over $\O_0$ is no longer a product measure in the new spin variables $\eta^i_a$. However, it is still a product over the ``block'' variables $\eta^i=(\eta^i_a,\,a\in[q])$ and this will suffice for our analysis to go through. 

\subsection{Stochastic localization process}\label{sec:stloc}
In the new variables $\eta\in\O_0$, we apply the stochastic localization process introduced in \cite{eld13}, see also \cite{cheneldan}. The crucial properties of this construction are discussed in detail in these references. However, for the sake of completeness we include a short proof of some of the main statements adapted to our setting. 
Let $(B_t, \,t\ge0)$ be a standard Brownian motion in $\bbR^{qn}$ adapted to a filtration $(\mathcal{F}_t, \,t\ge0)$.  
 Consider the system of differential equations
 \begin{align}\label{locprocmarta}
		dF_t(\eta) = F_t(\eta)\scalar{\eta-m(F_t )}{(2\G)^{\frac12}dB_t}\,, \qquad \eta\in\O_0\,,
	\end{align}
	with initial condition $F_0(\eta)=1$ for all $\eta\in\O_0$. 	Here $m(F_t)\in\bbR^{qn}$ denotes the linear function
	\begin{align}\label{eq:mt}
	m(F_t):=\sum_{\eta \in \O_0}\eta\, F_t(\eta)\r(\eta)\,. 
	\end{align}
Existence and uniqueness of the solution for the system \eqref{locprocmarta} can be derived by standard methods, see \cite{eld13}. The next lemma summarizes some key features of the process.
	
	\begin{lemma}\label{lem:ito}
	The solution $F_t$ to \eqref{locprocmarta} is an $(\cF_t)$-martingale satisfying, for all $\eta\in\O_0$,  
	\begin{align}\label{locprocmart}
	F_t(\eta) = Z^{-1}_t\exp\left( - \tfrac12\scalar{\Sigma_t\eta}{\eta} + \scalar{y_t}{\eta}\right)\,, \ \ \ \ \  t \geq 0,
\end{align}
where $\Sigma_t := 2t\G ,\; y_t := (2\G)^{\frac12}B_t + 2\Gamma\int_0^tm(F_s)ds$, and $Z_t$ is the normalizing constant
\begin{align}\label{eq:zt}
	Z_t = \sum_{\eta\in \Om_0}\rho(\eta)\exp\left( - \tfrac12\scalar{\Sigma_t\eta}{\eta} + \scalar{y_t}{\eta}\right)\,.
\end{align}
In particular, $\r_t:=F_t \r$ is a probability measure on $\O_0$ for all $t\ge 0$, and $\r_t(\eta)$ is a martingale for each fixed $\eta$. 
	\end{lemma}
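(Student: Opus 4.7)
The plan is to take existence and uniqueness of the solution $F_t$ to \eqref{locprocmarta} as given (as the excerpt notes), apply It\^o's formula to $\log F_t(\eta)$ in order to read off an exponential form with a quadratic-in-$\eta$ exponent, and then pin down the normalizer by showing, directly from the SDE, that $\sum_\eta \rho(\eta) F_t(\eta)\equiv 1$.

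For the first step, the SDE \eqref{locprocmarta} gives $d\langle F(\eta)\rangle_t = F_t(\eta)^2 \cdot 2\scalar{\G(\eta - m(F_t))}{\eta - m(F_t)}\,dt$, so It\^o yields
\[
d\log F_t(\eta) = \scalar{\eta - m(F_t)}{(2\G)^{1/2}dB_t} - \scalar{\G(\eta - m(F_t))}{\eta - m(F_t)}\,dt.
\]
Expanding the quadratic form and using symmetry of $\G$, the $\eta$-dependent part integrates to
\[
\scalar{\eta}{(2\G)^{1/2}B_t} - t\scalar{\G\eta}{\eta} + 2\bigl\langle\G\eta,\,\textstyle\int_0^t m(F_s)\,ds\bigr\rangle = -\tfrac12\scalar{\Sigma_t\eta}{\eta} + \scalar{y_t}{\eta},
\]
while the remaining contributions combine into an adapted scalar process $-c_t$ that does not depend on $\eta$. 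Exponentiating,
\[
F_t(\eta) = e^{-c_t}\exp\!\left(-\tfrac12\scalar{\Sigma_t\eta}{\eta} + \scalar{y_t}{\eta}\right),
\]
which matches the claimed form \eqref{locprocmart} once we identify $e^{c_t}$ with $Z_t$.

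To complete this identification, set $N_t := \sum_{\eta\in\O_0}\rho(\eta)F_t(\eta)$ and sum \eqref{locprocmarta} against $\rho(\eta)$. Since $m(F_t) = \sum_\eta \eta\,F_t(\eta)\rho(\eta)$ by \eqref{eq:mt}, the summation collapses to the linear driftless SDE
\[
dN_t = (1 - N_t)\,\scalar{m(F_t)}{(2\G)^{1/2}dB_t},
\]
whose unique solution with $N_0 = 1$ is $N_t\equiv 1$. Hence $\rho_t = F_t\rho\in\cP(\O_0)$ for every $t$, and matching $N_t = 1$ with the exponential form forces $e^{c_t} = Z_t$ as defined in \eqref{eq:zt}. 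Finally, $N_t\equiv 1$ and $\rho(\eta) > 0$ for all $\eta\in\O_0$ together give the uniform bound $F_t(\eta)\le 1/\rho(\eta)$; combined with the vanishing drift in \eqref{locprocmarta}, this makes $F_t(\eta)$ a bounded local martingale and hence an honest $(\cF_t)$-martingale.

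The main subtlety is handling the $\eta$-independent scalar remainder $c_t$ in the It\^o computation without tracking its explicit form: although $c_t$ involves both a stochastic integral against $B$ and a Lebesgue integral in $m(F_s)$, for the lemma it suffices that $c_t$ is independent of $\eta$, so that the full $\eta$-dependence is captured by the Gaussian-like factor, and the normalization identity $N_t\equiv 1$ then automatically identifies $e^{-c_t}$ with $Z_t^{-1}$. Once this decoupling is isolated, all remaining steps reduce to routine stochastic calculus on the finite state space $\O_0$, aided by the a priori bound $F_t(\eta)\le 1/\rho(\eta)$.
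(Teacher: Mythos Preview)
Your proof is correct. The route differs from the paper's: the paper defines the candidate $H_t:=Z_t^{-1}G_t$ with $G_t(\eta)=\exp(-\tfrac12\scalar{\Sigma_t\eta}{\eta}+\scalar{y_t}{\eta})$, applies It\^o separately to $G_t$ and to $Z_t^{-1}$, and checks that $H_t$ obeys \eqref{locprocmarta}, concluding $H_t=F_t$ by uniqueness. You instead work forward, applying It\^o to $\log F_t(\eta)$, splitting off the $\eta$-dependent exponent, and recovering the normalizer from the mass-conservation identity $N_t\equiv 1$. Your approach is a bit more self-contained: it derives the formula rather than verifying a guess, it avoids tracking $dZ_t^{-1}$ explicitly, and the bound $F_t(\eta)\le 1/\rho(\eta)$ gives an honest justification that the driftless local martingale is a true martingale (the paper simply declares this ``immediate''). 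The only small point to make explicit is that $F_t(\eta)>0$ (needed to apply It\^o to the logarithm and to deduce the pointwise bound from $N_t=1$), which follows at once since $F_t(\eta)$ is a Dol\'eans exponential.
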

\begin{proof}	
The martingale property is immediate from the definition \eqref{locprocmarta}. Define $G_t(\eta): = \exp\left(-\frac12 \scalar{\Sigma_t \eta}{\eta} + \scalar{y_t}{\eta}\right)$ and $H_t:=Z_t^{-1}G_t$, where $Z_t$ is given in \eqref{eq:zt}. Using It\^o formula, one checks that
	\begin{align*} %\label{diff1}
		&dG_t(\eta) = G_t(\eta)\scalar{\eta}{2\G m(F_t)dt+(2\G)^{\frac12}dB_t},\\ %\label{diff2}
		&dZ_t^{-1} =-Z_t^{-1}\scalar{m(F_t)}{(2\G)^{\frac12}dB_t}. %+ Z_t^{-1}\scalar{2\G m(\rho_t)}{m(\rho_t)}dt,
	\end{align*}
	It follows that
	\begin{align*}
dH_t  
		&=dG_t(\eta)Z_t^{-1} + G_t(\eta)dZ_t^{-1} + dG_t(\eta)dZ_t^{-1}\\
				&= F_t(\eta)\scalar{\eta-m(F_t)}{(2\G)^{\frac12}dB_t},
	\end{align*}
	and therefore $H_t$ and $F_t$ satisfy the same equation \eqref{locprocmarta}. This proves \eqref{locprocmart}. In particular, $\r_t:=F_t \r\in\cP(\O_0)$ for all $t\ge 0$, and $\r_t(\eta)$ is a martingale for each fixed $\eta$. 
	\end{proof}	
	The above construction yields a process $(F_t, t\ge 0)$ which depends on the choice %can be applied to any 
	of the distribution $\r\in \cP(\O_0)$ through the definitions \eqref{locprocmarta}-\eqref{eq:mt}. The process $(\r_t, t\ge 0)$ is called a stochastic localization process for $\r$. 
When %apply it with 
$\r$ is given by the expression \eqref{isingmatr}, the identity \eqref{locprocmart} shows that  $\r_t\in\cP(\O_0)$ satisfies
\begin{align}\label{stoclocmeasure}
	\rho_t(\eta) \propto \exp\left((1-t)\scalar{\eta}{\Gamma\eta} + \scalar{y_t+h}{\eta}\right)\IND_{\{\eta \in \Om_0\}}\,, \ \ \ \ \ \ t \geq 0.
\end{align}
This defines a {\em random} probability measure on $\O_0$ for each $t\ge 0$. When $t=1$, we have a random product measure over the block variables $\eta^i=(\eta^i_a,\,a\in[q])$, $i\in[n]$:
 \begin{align}\label{prodmeasure}
	\rho_1(\eta)=\prod_{i=1}^n\rho_{1,i}(\eta^i)\,,\qquad  \rho_{1,i}(\eta^i)\propto \exp\left(\scalar{y_1^i+h^i}{\eta^i}\right)\IND_{\{\eta^i \in \hat\Om_0\}}\,,
\end{align}
where $y_1^i$ and $h^i$ represent the $i$-th block of the variables $y_1,h\in\bbR^{qn}$, respectively. 

\subsection{Entropic stability}
Next, we turn to a key estimate established in \cite{cheneldan}, which allows us to compare relative entropies with respect to $\r_t$ with relative entropies with respect to $\r$ by using a covariance estimate. 
We use the notation \begin{align}\label{eq:covtv}
\cov[\nu](i,a;j,b) = \nu(\eta^i_a\eta^j_b)- \nu(\eta^i_a)\nu(\eta^j_b),
\end{align}
for the $qn\times qn$ covariance matrix of any  $\nu\in\cP(\O_0)$, and  write $\l(\cov[\nu])$ for its largest eigenvalue. If $v \in \bbR^{qn}$, the tilted measure $\mathcal{T}_v\nu$ is defined as 
\begin{align}\label{eq:tvnu}
\mathcal{T}_v\nu(\eta) \propto\nu(\eta)e^{\scalar{v}{\eta}}\,.
	\end{align}
The next lemma is a version of \cite[Proposition 39]{cheneldan}.
\begin{lemma}\label{prop1}
	Suppose there exist $T > 0$ and a deterministic function $\kappa$ such that a.s.\ $\l(\cov[\cT_v\rho_t]) \leq \kappa(t)$ for all $t \in [0,T],$ for all $v\in\bbR^{qn}$.
	Then,  for all functions $f\ge 0$,
	\begin{align}
		\bbE\left[\ent_{\rho_T}f\right] \geq %e^{-2\l(\G)\int_0^T\kappa(s)ds}
		e^{-\int_0^Ta(s)ds}\,\ent_\rho f,\qquad a(s):=2\l(\G)\k(s)\,,
	\end{align}
	where $\bbE$ denotes the expectation over %with respect to the Brownian motion underlying 
	the stochastic localization process $\r_t$. 
	
\end{lemma}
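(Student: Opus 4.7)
The plan is to differentiate $t\mapsto \bbE[\ent_{\rho_t}f]$ via It\^o's formula and close the resulting inequality with Gronwall's lemma. First I note that by Lemma~\ref{lem:ito}, the quantity $\rho_t(f\log f) = \sum_\eta F_t(\eta)\rho(\eta)f(\eta)\log f(\eta)$ is a martingale, so $\bbE[\rho_t(f\log f)] = \rho(f\log f)$ is constant in $t$. Writing $g_t := \rho_t(f)$, it follows that
\[
\bbE[\ent_{\rho_t}f] \,=\, \rho(f\log f) - \bbE[g_t\log g_t],
\]
and only the second term evolves with $t$. Inserting \eqref{locprocmarta} and using $m(F_t) = \rho_t(\eta)$, the scalar process $g_t$ obeys
\[
dg_t = \scalar{c_t}{(2\G)^{1/2}dB_t}, \qquad c_t := \rho_t(\eta f) - g_t\,\rho_t(\eta) \in \bbR^{qn},
\]
with quadratic variation $d\langle g\rangle_t = 2\scalar{c_t}{\G c_t}\,dt$. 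It\^o's formula applied to $x\log x$, followed by taking expectations so that the martingale part vanishes, then yields
\[
\frac{d}{dt}\bbE[\ent_{\rho_t}f] \,=\, -\,\bbE\!\left[\tfrac{1}{g_t}\scalar{c_t}{\G c_t}\right].
\]

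The key step is to dominate the integrand by $\ent_{\rho_t}f$. The hypothesis $\l(\cov[\cT_v\rho_t]) \le \kappa(t)$ uniformly in $v\in\bbR^{qn}$ is equivalent to the statement that the cumulant generating function $v\mapsto\log\rho_t(e^{\scalar{v}{\eta}})$ has Hessian bounded above by $\kappa(t)I$ everywhere, hence
\[
\log\rho_t(e^{\scalar{v}{\eta}}) \,\le\, \scalar{v}{m(\rho_t)} + \tfrac{\kappa(t)}{2}|v|^2, \qquad v\in\bbR^{qn}.
\]
Inserting this sub-Gaussian bound into the Donsker--Varadhan formula $H(\nu|\rho_t) \ge \scalar{v}{m(\nu)} - \log\rho_t(e^{\scalar{v}{\eta}})$ and optimizing $v$ in the direction $m(\nu)-m(\rho_t)$ yields the \emph{entropic stability} inequality
\[
|m(\nu) - m(\rho_t)|^2 \,\le\, 2\kappa(t)\,H(\nu\,|\,\rho_t), \qquad \nu \ll \rho_t.
\]
I would then apply this with $\nu := (f/g_t)\rho_t$, for which a direct computation gives $m(\nu)-m(\rho_t) = c_t/g_t$ and $H(\nu|\rho_t) = \ent_{\rho_t}(f)/g_t$, yielding
\[
\scalar{c_t}{\G c_t} \,\le\, \l(\G)\,|c_t|^2 \,\le\, 2\l(\G)\kappa(t)\,g_t\,\ent_{\rho_t}f \,=\, a(t)\,g_t\,\ent_{\rho_t}f.
\]

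Dividing by $g_t$ and taking expectation delivers the differential inequality $\tfrac{d}{dt}\bbE[\ent_{\rho_t}f] \ge -a(t)\,\bbE[\ent_{\rho_t}f]$; Gronwall's lemma on $[0,T]$, combined with $\rho_0 = \rho$, then gives the stated bound. The only genuinely nontrivial step is the entropic stability inequality, i.e., the passage from a uniform pointwise covariance bound to a transportation-type estimate on means; this is a standard Bobkov--G\"otze duality argument, available in the precise form needed here as \cite[Proposition 39]{cheneldan}, and I would either invoke it directly or reproduce the short duality proof sketched above.
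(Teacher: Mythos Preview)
Your argument is correct and follows essentially the same route as the paper: both compute $d\,\ent_{\rho_t}f$ via It\^o's formula applied to $M_t\log M_t$ (your $g_t\log g_t$), identify the drift as $-\frac{1}{g_t}\scalar{c_t}{\Gamma c_t}$, and then bound this by $a(t)\,\ent_{\rho_t}f$ through an entropic stability estimate. The only cosmetic differences are that the paper works pathwise (showing $e^{\int_0^t a(s)ds}\ent_{\rho_t}f$ is a submartingale) rather than taking expectations first, and cites \cite[Lemma~31 and Lemma~40]{cheneldan} for entropic stability, whereas you sketch the Bobkov--G\"otze/Donsker--Varadhan duality argument directly; the resulting constants and conclusion are identical.
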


\begin{proof}%[Proof of Lemma \ref{prop1}]
It suffices to show that $e^{\int_0^ta(s)ds}\ent_{\rho_t} f$, $t\in[0,T]$ is a submartingale. In turn, the It\^o formula shows that this is implied by the differential inequality
\begin{align}\label{eq:topro}
		d\,\ent_{\rho_t} f \ge- a(t) \ent_{\rho_t}f + \text{martingale} \ \ \ \ \ \forall t \in [0,T]\,.
	\end{align}
To prove \eqref{eq:topro}, following \cite{cheneldan}, we consider the martingale $M_t := \rho_t(f)$, and observe that 
since $F_t$ solves \eqref{locprocmarta}, one has
	\begin{align*}
		dM_t = M_t\scalar{\left(m(U_t)-m(F_t)\right)}{(2\G)^{\frac12}dB_t},
	\end{align*}
	where 
$U_t(\eta):=F_t(\eta)f(\eta)/M_t$. Note that $U_t\r\in\cP(\O_0)$. 
	Next, It\^o formula shows that %and the previous equation one has
	\begin{align}
		d\left(M_t \log M_t\right) = \frac{1}{2}M_t\|(2\G)^{\frac12}\left(m(U_t)-m(F_t)\right)\|_2^2\,dt + \text{martingale}.
	\end{align}
	Therefore,
	\begin{align} \nonumber
		d\,\ent_{\rho_t}(f) &= \sum_{\eta \in \O_0} d\rho_t(\eta)f(\eta)\log f(\eta) - d\left(M_t \log M_t\right)  \\ \label{diffent}
		& = -\frac{1}{2}M_t\|(2\G)^{\frac12}\left(m(U_t)-m(F_t)\right)\|_2^2\,dt + \text{martingale}.
	\end{align}
	Since $\l(\cov[\cT_v\rho_t])\leq \kappa(t)$,  for all $t \in [0,T]$ and for all $v\in\bbR^{qn}$, we can apply the general entropic stability estimates established %in Lemma 40 %and Lemma 31 
	in \cite[Lemma 31]{cheneldan} and \cite[Lemma 40]{cheneldan} to infer
	\begin{align*}
		\frac{1}{2}\|(2\G)^{\frac12}\left(m(U_t)-m(F_t)\right)\|_2^2\le 2\l(\G)\k(t)\,H(U_t\r\,|\,F_t\r)\,.
	\end{align*}
	Using the previous inequality in \eqref{diffent} and noting that $M_tH(U_t\r\,|\,F_t\r)=\ent_{\rho_t}(f)$ yields \eqref{eq:topro} with the function $ a(t)=2\l(\G)\k(t)$.
\end{proof}

\subsection{Covariance estimate}
To make an effective use of the entropic stability in Lemma \ref{prop1}, we need to control $\l(\cov[\cT_v\r_t])$
uniformly in $v\in\bbR^{qn}.$ We first need the following simple lemma, which covers the case $n=1$.

	\begin{lemma}\label{lem:cov}
		For all $q\in\bbN$, $q\ge 2$, for any probability vector $(m_1,\dots,m_q)$, the  $q\times q$ covariance matrix $C_{a,b} = m_a(1-m_b)\IND_{a=b} - m_am_b \IND_{a\neq b}$ has maximum eigenvalue
		\[ \l(C)\le 1/2.
		\] 
	\end{lemma}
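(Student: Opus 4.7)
The plan is to identify $C$ as the covariance matrix of a categorical random variable and then apply Popoviciu's inequality on variances. Concretely, observe that $C = \diag(m) - m m^T$, where $m=(m_1,\dots,m_q)^T$, which is precisely the covariance matrix of the random vector $X \in \{e_1,\dots,e_q\}\subset \bbR^q$ with $\Prob(X=e_a)=m_a$. Consequently, for any $v\in\bbR^q$,
\begin{align*}
\scalar{v}{Cv} \;=\; \var(\scalar{v}{X}) \;=\; \sum_a m_a v_a^2 - \Bigl(\sum_a m_a v_a\Bigr)^2 \;=\; \var_m(v),
\end{align*}
where $\var_m$ denotes the variance of the function $a\mapsto v_a$ under the probability $(m_1,\dots,m_q)$.

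The main step is then to bound $\var_m(v)$ in terms of $\|v\|_2^2$. Since $\scalar{v}{X}$ takes values in the interval $[\min_a v_a,\max_a v_a]$, Popoviciu's inequality on variances gives
\begin{align*}
\var_m(v) \;\le\; \tfrac{1}{4}\bigl(\max_a v_a - \min_a v_a\bigr)^2.
\end{align*}
Using the elementary bound $(x-y)^2\le 2(x^2+y^2)$, I get $(\max_a v_a - \min_a v_a)^2 \le 2(\max_a v_a)^2 + 2(\min_a v_a)^2 \le 2\|v\|_2^2$. Combining these two estimates,
\begin{align*}
\scalar{v}{Cv} \;\le\; \tfrac{1}{2}\|v\|_2^2,
\end{align*}
which is the required bound $\l(C)\le 1/2$ after taking the supremum over unit vectors $v$.

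I don't anticipate any real obstacle: the computation $C=\diag(m)-mm^T$ is immediate from the definition, and both Popoviciu and the $(x-y)^2\le 2(x^2+y^2)$ bound are standard. The only subtle point worth noting is that the bound $1/2$ is actually tight, attained for instance when $q=2$ with $m_1=m_2=1/2$ and $v=(1,-1)$, which confirms that Popoviciu is the correct tool rather than the cruder $\var_m(v)\le \sum_a m_a v_a^2\le \|v\|_2^2$.
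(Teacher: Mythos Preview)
Your proof is correct. Both you and the paper begin by recognizing $\langle v,Cv\rangle$ as the variance of the scalar random variable $\langle v,X\rangle$ under the categorical law $(m_1,\dots,m_q)$; the only difference is in how that variance is then bounded. The paper writes the variance in symmetrized form,
\[
\var(\langle v,X\rangle)=\tfrac12\sum_{a,b}m_am_b(v_a-v_b)^2,
\]
bounds each summand via $(v_a-v_b)^2\le 2(v_a^2+v_b^2)$, and finishes using $m_a(1-m_a)\le 1/4$. You instead invoke Popoviciu's inequality $\var(Y)\le\tfrac14(\max Y-\min Y)^2$ and then bound $(\max_a v_a-\min_a v_a)^2\le 2\|v\|_2^2$, which uses $q\ge 2$ (so that the max and min, when distinct, are attained at different coordinates). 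Both routes are one-liners; the paper's intermediate step yields the slightly sharper pointwise estimate $\langle v,Cv\rangle\le 2\sum_a m_a(1-m_a)v_a^2$, but this refinement is not used anywhere else, so your cleaner Popoviciu argument loses nothing for the purposes of the paper.
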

	\begin{proof}
		For any $u\in\bbR^q$ one has
		\[
		\textstyle{\sum_{a,b}}u_aC_{a,b}u_b = \var(\scalar{u}{\o}) = \frac12\textstyle{\sum_{a,b}}m_am_b(u_a-u_b)^2\,,
		\]
		where $\o$ is the random variable that equals ${\rm e}_a$ %, the $i$-th canonical basis vector in $\bbR^q$, 
		with probability $m_a$, $({\rm e}_1,\dots,{\rm e}_q)$ is the canonical basis in $\bbR^q$, and $\scalar{u}{\o}$ denotes the canonical scalar product in $\bbR^q$. 
		Using $(u_a-u_b)^2\le 2(u_a^2+u_b^2)\ind_{a\neq b}$, and $m_a(1-m_a)\leq 1/4$, we obtain
		\[
		\textstyle{\sum_{a,b}}u_aC_{a,b}u_b \leq\textstyle{\sum_{a,b}}m_am_b(u_a^2+u_b^2)\ind_{a\neq b}=2\textstyle{\sum_{a}}m_a(1-m_a)u_a^2\leq \frac12\textstyle{\sum_{a}}u_a^2\,. \qedhere
		\]
	\end{proof}
	We note that the above estimate is tight since if $m_1=m_2=1/2$ one has $\l(C)= 1/2$.
Note also that Lemma \ref{lem:cov} says that, when $n=1$, any covariance matrix of the form \eqref{eq:covtv} must have maximal eigenvalue bounded by $1/2$, since when $n=1$ the distribution $\cT_v\nu$ has covariance $C$ as in the statement of the lemma. The following is a version % generalization 
of  \cite[Lemma 50]{cheneldan}, which in turn was based on the decomposition introduced by Bauerschmidt and Bodineau \cite{BB19}. We adapted the proof to our setting.
\begin{lemma}\label{pottscovlemma}
	Consider the spin system $\rho\in\cP(\O_0)$ given by \eqref{isingmatr}, and assume that $\G$ is positive definite with $\l(\G) \le 1-\delta$ where $\delta \in [0,1)$. Then for any $v \in \bbR^{qn}$, $t\in[0,1]$, one has
	\begin{align}\label{eq:covrov}
\l\(\cov[\mathcal{T}_v\rho_t]\) \leq \min \left\{ \frac1{2(1-(1-\delta)(1-t))}, \frac{n}{2} \right\}.
	\end{align}
\end{lemma}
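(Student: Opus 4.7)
The plan is to first absorb the tilt $v$ into the external field. From \eqref{stoclocmeasure}, the tilted measure has the form
\[
\tilde\rho(\eta) \propto \exp\bigl(\scalar{\eta}{\Gamma'\eta} + \scalar{h'}{\eta}\bigr)\,\IND_{\{\eta\in\Omega_0\}},\qquad \Gamma':=(1-t)\Gamma,\;h':=v+y_t+h,
\]
so it suffices to bound $\l(\cov[\tilde\rho])$ uniformly in $h'\in\bbR^{qn}$. Set $\delta' := 1-(1-t)(1-\delta)$, so that $\l(\Gamma')\le 1-\delta'$.

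For the crude bound $\l(\cov[\tilde\rho])\le n/2$, I would expand $\var(\scalar{u}{\eta}) = \sum_{i,j}\cov(u^i_{\sigma_i}, u^j_{\sigma_j})$ for a unit vector $u\in\bbR^{qn}$, bound every off-diagonal term by $\tfrac12(\var(u^i_{\sigma_i})+\var(u^j_{\sigma_j}))$, and apply Lemma~\ref{lem:cov} at each site to obtain $\var(u^i_{\sigma_i})\le \tfrac12\|u^i\|^2$. A direct summation then produces $\l(\cov[\tilde\rho])\le n/2$.

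The heart of the proof is the sharper bound $\tfrac{1}{2\delta'}$, for which I would use the Hubbard--Stratonovich / Bauerschmidt--Bodineau decomposition in the spirit of \cite[Lemma 50]{cheneldan}, adapted to the present $q$-block structure. After perturbing $\Gamma'$ to positive definite and taking a limit at the end, I would introduce an auxiliary Gaussian $z\sim N(0,2\Gamma')$ and couple it with $\eta$ through the joint density
\[
P(\eta,z) \,\propto\, \IND_{\{\eta\in\Omega_0\}}\, e^{\scalar{\eta}{h'+z}}\, \gamma_{2\Gamma'}(z).
\]
The identity $\bbE_Z[e^{\scalar{\eta}{Z}}] = e^{\scalar{\eta}{\Gamma'\eta}}$ for $Z\sim N(0,2\Gamma')$ shows that the $\eta$-marginal of $P$ is $\tilde\rho$, while conditionally on $z$ the blocks $\{\eta^i\}$ become independent. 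The law of total covariance then gives
\[
\cov[\tilde\rho] = \bbE_z[\cov[\eta\,|\,z]] + \cov_z[m(z)],\qquad m(z):=\bbE[\eta\,|\,z],
\]
and the first summand is block-diagonal with spectral norm $\le 1/2$ by Lemma~\ref{lem:cov}.

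To bound $\cov_z[m(z)]$ I would invoke the Brascamp--Lieb inequality for the log-concave $z$-marginal of $P$. The Hessian of $-\log p$ equals $(2\Gamma')^{-1} - \cov[\eta\,|\,z]$, and combining $(2\Gamma')^{-1}\succeq \tfrac{1}{2(1-\delta')}I$ with $\cov[\eta\,|\,z]\preceq \tfrac12 I$ yields the deterministic lower bound $\tfrac{\delta'}{2(1-\delta')}I$. Applying Brascamp--Lieb to $f(z)=\scalar{u}{m(z)}$, whose gradient equals $\cov[\eta\,|\,z]\,u$ (because $m$ is the $z$-gradient of a log-partition function, whose Jacobian is the conditional covariance), together with the pointwise estimate $\|\cov[\eta\,|\,z]\,u\|\le \tfrac12\|u\|$, gives $u^T\cov_z[m(z)]\,u\le \tfrac{1-\delta'}{2\delta'}\|u\|^2$. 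Summing with the first term produces $\l(\cov[\tilde\rho])\le \tfrac{1}{2}+\tfrac{1-\delta'}{2\delta'}=\tfrac{1}{2\delta'}$, which together with the $n/2$ bound yields the claim. The main subtlety is aligning the constants in the Hessian comparison so that the exact factor $\tfrac{1}{2\delta'}$ emerges from Brascamp--Lieb; the possible degeneracy of $\Gamma'$ is handled by a standard $\eps$-perturbation and continuity argument.
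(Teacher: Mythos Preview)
Your proposal is correct and follows essentially the same route as the paper: the Hubbard--Stratonovich decomposition, the block-diagonal bound via Lemma~\ref{lem:cov}, and Brascamp--Lieb with the Hessian lower bound $(2\Gamma')^{-1}-\cov[\eta\mid z]\succeq \tfrac{\delta'}{2(1-\delta')}I$ are exactly what the paper does, and your constants align. The only minor divergence is the crude $n/2$ bound: the paper bounds the maximum absolute row sum of the covariance matrix (each row contributes at most $\tfrac12$ per site $j$ via a total-variation estimate), whereas you use Cauchy--Schwarz on the bilinear form together with the single-site variance bound from Lemma~\ref{lem:cov}; both are equally short and give the same result.
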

\begin{proof}
	We first show the trivial upper bound of $n/2$ in \eqref{eq:covrov}, which in fact holds for any measure $\nu\in\cP(\O_0)$.
	Let $\eta$ be generated from a measure $\nu\in\cP(\O_0)$. Then it holds
	\begin{align*}
		\l\(\cov[\nu]\)
		\le 
        %\left\| \cov[\nu] \right\|_\infty	= 
        \max_{i\in[n], a\in [q]} \sum_{j\in [n]} \sum_{b\in [q]} \left| \cov[\nu](i,a;j,b) \right|.
	\end{align*}
	For any $i,j \in [n]$ and $a \in [q]$, we observe that
	\begin{align*}
		\sum_{b\in [q]} \left| \cov[\nu](i,a; j,b) \right|
		&= \sum_{b\in [q]} \left| \nu(\eta^i_a \eta^j_b) - \nu(\eta^i_a) \nu(\eta^j_b)\right|\\
		&= \nu(\eta^i_a) (1 - \nu(\eta^i_a)) \sum_{b\in [q]} \left| \nu(\eta^j_b \mid \eta^i_a = 1) - \nu(\eta^j_b \mid \eta^i_a = 0) \right|\\
		&\le 2\nu(\eta^i_a) (1 - \nu(\eta^i_a)) \le \frac{1}{2}.
	\end{align*}
	It then follows that $\l\(\cov[\nu]\) \le n/2$, as claimed.
	
	We now proceed to prove the first upper bound in \eqref{eq:covrov}.
	From \eqref{stoclocmeasure} we see that it suffices to show the inequality \eqref{eq:covrov} with $t=0$ and with $\mathcal{T}_v\r_t$ replaced by %and with $\mathcal{T}_v\rho$ replaced by 
	the measure 
\begin{align}\label{eq:covrov11}	\rho_v(\eta) \propto\exp\left(\scalar{\eta}{\G\eta})+\scalar{v}{\eta}\right)\IND_{\{\eta \in \Om_0\}}\,,
\end{align}
with arbitrary $v \in \bbR^{qn}$.
%Fix a constant $c>0$. 
A gaussian integration shows that for any $\eta\in\O_0$, 
	\begin{align}\label{eq:covrov12}
		\exp(\scalar{\eta}{\G\eta})  \propto %{\rm const.}\times
		 \int_{\bbR^{qn}}dy\exp\left(-\frac12\scalar{y}{(2\G)^{-1}y} +\scalar{\eta}{y} \right),
	\end{align}
where the implicit positive constant is independent of $\eta$. Define the function
\begin{align*}
		f(y) := \exp\left(-\frac12\scalar{y}{(2\G)^{-1}y} + \log\left(Z\left(y + v\right)\right)\right)\,, %\cdot Z_f^{-1}
	\end{align*}
where $Z\left(w\right):=\sum_{\eta \in \Om_0}\exp\left(\scalar{\eta}{w}\right)$, for any $w\in\bbR^{qn}$.
From \eqref{eq:covrov11} and \eqref{eq:covrov12} we obtain 
\begin{align*}
		\rho_v(\eta) &\propto \int_{\bbR^{qn}}dyf(y)Z\left(y + v\right)^{-1}
\exp\left( \scalar{\eta}{y + v} \right)\ind_{\{\eta \in \Om_0\}}\,.
	\end{align*}
		If $\nu$ denotes the uniform probability measure over $\Om_0$ then \[
		Z\left(y + v\right)^{-1}\exp\left( \scalar{\eta}{y + v} \right)\ind_{\{\eta \in \Om_0\}}=\mathcal{T}_{y + v}\nu(\eta)\,.\] 
		Therefore, 
\begin{align}\label{eq:decrho}
\rho_v(\eta) = \int \mathcal{T}_{y + v}\nu(\eta)\hat f(y)dy\,,\end{align}
where $\hat f:=f/\int dyf(y)$ is a probability density on $\bbR^{qn}$.
This expresses $\r_v$ as a mixture of the tilted measures $\mathcal{T}_{y + v}\nu$. %Letting $X$ be a $\bbR^{qn}$-valued random variable with density $\hat f$, t
By the law of total covariance,
	\begin{align}\label{eq:covtot}
		\cov[\rho_v] &= \cC\,+\,\int   dy\hat f(y) \cov [\mathcal{T}_{y + v}\nu]
		 \,,%\cov\left({\bf E}\left[\mathcal{T}_{c X + v}\nu\right]\right),
	\end{align}
	where %${\bf E}$ stands for the expectation over $X$ and 
	$\cC$ stands for the $qn\times qn$ matrix
	\[
	\cC(i,a;j,b):= \int  w^i_a(y)w^j_b(y) \hat f(y) dy - \(\int  w^i_a(y) \hat f(y) dy\)\(\int  w^j_b(y) \hat f(y) dy\),
	\]
	and we use the notation $w^i_a(y):=\sum_{\eta\in\O_0}\mathcal{T}_{y + v}\nu(\eta)\eta^i_a$, $i\in[n]$, $a\in[q]$. 
	Recalling that $\nu$ is a product over the block variables $\eta^i$, we note that for any $x\in\bbR^{qn}$, $\cT_x\nu(\eta)$ has the form $\prod_{i=1}^n \nu_x^i(\eta^i)$ for some probability measures $\nu_x^i$ on $ \hat{\Om}_0$. Thus $\cov [\mathcal{T}_{x}\nu]$ is a block diagonal matrix with blocks given by the $q\times q$ matrices $\cov [\nu_x^i]$. Any probability measure on $\hat\O_0$ has a covariance matrix of the form $C$ given in Lemma \ref{lem:cov} for some probability vector $(m_1,\dots,m_q)$. Therefore, 
	\begin{align}\label{eq:covbo1}
\l(\cov [\mathcal{T}_{x}\nu])\le 1/2\,,\qquad x\in\bbR^{qn}\,.
\end{align} 
%for any $x\in\bbR^{qn}$. 
It follows that the last term in \eqref{eq:covtot} has maximum eigenvalue bounded by $\frac12$. 
	 
We turn to a bound on the matrix $\cC$. With the notation introduced above, for any $u \in\bbR^{qn}$ we write
	\begin{align}\label{eq:covy}
		\scalar{u}{\cC u} &= \textstyle{\sum_{i,j}\sum_{a,b}}u^i_au^j_b\cC(i,a;j,b)=\var\left(g_u(X)\right)\,, 
			\end{align}
where $X$ is the $\bbR^{qn}$-valued random variable with density $\hat f$, %$\var$ is the variance functional, 
and %we write 
$g_u(y):=\sum_{i,a} u^i_{a}  w^i_a(y)$. 
We are going to use the Brascamp-Lieb inequality (Theorem 4.1 in \cite{brasclieb}). %show that $\hat f$ is log-concave. 
Namely, suppose that %we claim that 
$-{\rm Hess}\log \hat f (y)$ is lower bounded, as quadratic forms,  by $h{\rm Id}$ where $h>0$ and ${\rm Id}$ is the $qn\times qn$ identity matrix. Then the Brascamp-Lieb inequality %says that  
	\begin{align}\label{eq:BLineq}
	\var\left(g_u(X)\right)\leq h^{-1}\int dy \hat f(y)\|\grad g_u(y)\|_2^2 \,,
	\end{align}
	where $\grad g_u(y)\in\bbR^{qn}$ denotes the vector $[\grad g_u(y)]^i_a = \frac{\partial}{\partial y^i_a}g_u(y)$. By construction,
	\[
	[\grad g_u(y)]^i_a = \sum_{b=1}^q u^i_{b}  \frac{\partial w^i_b(y)}{\partial y^i_a}= \sum_{b=1}^qu^i_{b} \cC^i_{a,b}\,, %=c^2 \,(C^iu^i)_a\,.\
	\] 
where, for a fixed $i\in[n]$, $\cC^i = (\cC^i_{a,b})$ is a matrix as in Lemma \ref{lem:cov}, with $m_a := w^i_a$. Therefore, the maximum eigenvalue of $(\cC^i)^2$ is bounded by $1/4$ and we obtain, for all $y\in\bbR^{qn}$,  
\[
	\|\grad g_u(y)\|_2^2=\sum_{i,a} ([\grad g_u(y)]^i_a)^2 = \sum_{i,a} (\cC^iu^i)^2_a \le \frac{1}{4}\scalar{u}{u}\,.
	\] 
From \eqref{eq:BLineq} we conclude 	
\begin{align}\label{eq:BLineq2}
	\scalar{u}{\cC u} \leq \frac{1}{4h} \scalar{u}{u}\,\,.
	\end{align}
It remains to establish the bound on the Hessian of $\log \hat f$. We note that 
	\begin{align*}
-\log \hat f(y)= \frac{1}2\scalar{y}{(2\G)^{-1}y} - \log\left(Z\left(y + v\right)\right) + {\rm const.}
	\end{align*}
	Since ${\rm Hess}\log\left(Z\left(y + v\right)\right) =  \cov[\mathcal{T}_{y + v}\nu]$, using Lemma \ref{lem:cov}, as quadratic forms one has
	 ${\rm Hess}\log\left(Z\left(y + v\right)\right)\le  \frac12\,{\rm Id}$. Moreover, ${\rm Hess}\scalar{y}{(2\G)^{-1}y}\ge \l(\G)^{-1}{\rm Id}$. Therefore we obtain the quadratic form bound
		\begin{align}\label{eq:BLineq3}
-{\rm Hess}\log \hat f(y) \ge \frac{1}2
\left(\frac{1}{\l(\G)} - 1\right){\rm Id}.
	\end{align}
	Summarizing, we may take $h =  \frac{1}2
\left(\frac{1}{\l(\G)} - 1\right)$ and  \eqref{eq:BLineq2} becomes $\scalar{u}{\cC u} \leq \frac{\l(\G)}{2(1-\l(\G))}\scalar{u}{u}$. In conclusion, \eqref{eq:covtot} shows that 
$\l\(\cov[\rho_v]\)\le\frac1{2(1-\l(\G))}$. 
\end{proof}

\subsection{Stochastic localization and block dynamics}
The stochastic localization process has been successfully employed in the analysis of Glauber dynamics in \cite{cheneldan}. Here we adapt the arguments from \cite{cheneldan} to handle arbitrary block dynamics. 

Given $\eta\in\O_0$, $t\ge 0$, ad $A\subset [n]$, we write $\r^\eta_{t,A}(\xi) \propto \r_{t}(\xi)\IND_{\xi^{A^c}=\eta^{A^c}}$  for the conditional probability on $\xi^A:=\{\xi^i,\; i\in A\}$
 given the configuration $\eta^{A^c}:=\{\eta^j,\; j\in A^c\}$ outside $A$. Equivalently, we write
 \begin{align}\label{eq:equico}
	\r^\eta_{t,A}(\xi): = \frac{\r_t(\xi)\IND_{\xi^{A^c}=\eta^{A^c}}}{\r_t(\eta^{A^c})},
	\end{align}
	 where $\r_t(\eta^{A^c})=\sum_{\eta^A} \r_t(\eta)$ denotes the marginal of $\r_t$ on the blocks $A^c$.
For each $t\ge0$, $\eta\in\O_0$, $A\subset [n]$, this defines a probability $\r^\eta_{t,A}\in\cP(\O_0)$, and we define the probability kernel
\begin{align}\label{eq:parandom}
Q_{t,A}(\eta,\xi)=%\sum_{A \subset V}\alpha_A
\r_{t,A}^\eta(\xi)\,,\qquad \eta,\xi\in\O_0\,.
\end{align}
 Since $\r=\r_0$, we note that the kernel $P_\a$ defined in \eqref{eq:pa} can be written in the new variables $\eta\in\O_0$ as 
$\sum_{A \subset V}\alpha_AQ_{0,A}.$
%Next, let $\cA$ be a random subset of $[n]$ with distribution given by $\a\in \cW$. For all $t\ge 0$ we consider the random distribution $\r^\eta_{t,\cA}\in\cP(\O_0)$, which equals $\r^\eta_{t,A}$ with probability $\a_A$, $A\subset [n]$.   Note that there are two independent sources of randomness here, namely the one coming from the Brownian motion in the stochastic localization process $\r_t=F_t\r$ and the one inherited from the choice of the random set $\cA$ with law $\a$.  To distinguish them we shall write $\bbE_B$ for the expectation over the first, $\bbE_\a$ 
%  for the expectation over the second, and $\bbE_B\times \bbE_\a$ for the joint expectation.
%  
%For each $t\ge 0$, we define the random probability kernel 
%\begin{align}\label{eq:parandom}
%Q_{t,\cA}(\eta,\xi)=%\sum_{A \subset V}\alpha_A
%\r_{t,\cA}^\eta(\xi)\,,\qquad \eta,\xi\in\O_0\,.
%\end{align}
%From \eqref{eq:pa1} we see that $\bbE_\a Q_{0,\cA}(\eta,\xi)=Q_{\alpha}(\eta,\xi)$. We also write $Q_{t,\cA}f (\eta)= \sum_{\xi}\r_{t,\cA}^\eta(\xi)f(\xi)$, for any function $f$ on $\O_0$. 
The next Lemma is the analogue of  \cite[Proposition 48]{cheneldan} in %will play an important role in 
our setting.

\begin{lemma}%[Proposition 48 in \cite{cheneldan}]
\label{submarlemma}
	For every %$\phi:\Om_0 \longrightarrow \bbR$ and 
	$f:\Om_0 \mapsto \bbR_+$, for any $A\subset V$, % of the random subset of $[n]$,  
	\begin{align}\label{submarent}
		\bbE\left[\textstyle{\sum_{\eta \in \Om_0}}\rho_t(\eta)Q_{t,A}f (\eta) \log Q_{t,A}f (\eta)\right] \geq \textstyle{\sum_{\eta \in \Om_0}}\rho(\eta)Q_{0,A}f (\eta)\log Q_{0,A}f (\eta)
	\end{align}
	where $\bbE$ is the expectation over the random process $\rho_t$.
\end{lemma}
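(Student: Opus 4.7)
I will establish the stronger statement that
\[
\Psi_t \;:=\; \sum_{\eta \in \Om_0} \rho_t(\eta)\, Q_{t,A}f(\eta) \log Q_{t,A}f(\eta)
\]
is an $(\cF_t)$-submartingale, so that taking expectations and using $\rho_0=\rho$ yields \eqref{submarent}.

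The key manipulation is to rewrite $\Psi_t$ in terms of two explicit martingales. For each boundary configuration $\eta^{A^c}=(\eta^j,\,j\in A^c)$, set
\[
N_t(\eta^{A^c}) := \sum_{\xi^A} \rho_t(\xi^A,\eta^{A^c}), \qquad
M_t(\eta^{A^c}) := \sum_{\xi^A} \rho_t(\xi^A,\eta^{A^c})\, f(\xi^A,\eta^{A^c}).
\]
Then $Q_{t,A}f(\eta)=M_t(\eta^{A^c})/N_t(\eta^{A^c})$ depends only on $\eta^{A^c}$, and the inner sum over $\eta^A$ can be absorbed, so
\[
\Psi_t \;=\; \sum_{\eta^{A^c}} M_t(\eta^{A^c})\log\!\bigl(M_t(\eta^{A^c})/N_t(\eta^{A^c})\bigr).
\]
By Lemma~\ref{lem:ito}, $\rho_t(\zeta)$ is a continuous martingale for each fixed $\zeta\in\Om_0$, hence the finite linear combinations $M_t(\eta^{A^c})$ and $N_t(\eta^{A^c})$ are continuous $(\cF_t)$-martingales. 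The explicit form \eqref{stoclocmeasure} shows that $\rho_t$ is strictly positive on $\Om_0$, so $N_t(\eta^{A^c})>0$ a.s.

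I then apply It\^o's formula to the $C^2$ function $F(m,n)=m\log(m/n)$ on $\{m,n>0\}$. A direct computation of the partials yields
\[
dF(M_t,N_t) \;=\; \text{(local martingale)}\;+\;\tfrac{M_t}{2}\Bigl(\tfrac{d\langle M\rangle_t}{M_t^2}+\tfrac{d\langle N\rangle_t}{N_t^2}-\tfrac{2\,d\langle M,N\rangle_t}{M_tN_t}\Bigr),
\]
and the bracket on the right is precisely $d\langle \log M-\log N\rangle_t\ge 0$. Thus each summand $F(M_t(\eta^{A^c}),N_t(\eta^{A^c}))$ is a submartingale, and summing over the finitely many $\eta^{A^c}$ preserves this property, so $\Psi_t$ is an $(\cF_t)$-submartingale as required.

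The main technical nuisance is integrability/degeneracy when $f$ or $M_t$ vanishes. I handle this in the standard way: first replace $f$ by $f+\varepsilon$, for which $M_t\ge \varepsilon N_t>0$ and $\log(M_t/N_t)$ is bounded, so the local martingale part is a genuine martingale and the submartingale inequality $\bbE[\Psi_t]\ge \Psi_0$ follows verbatim; then let $\varepsilon\downarrow 0$ using continuity of $m\mapsto m\log(m/n)$ at $m=0$ and dominated convergence. The only conceptual ingredient is the nonnegativity of the It\^o drift of $F$, which is a manifestation of the convexity of the relative-entropy-like function $F(m,n)$; everything else is rewriting or approximation.
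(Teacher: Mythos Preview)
Your proof is correct and rests on the same rewriting as the paper: both identify $\Psi_t=\sum_{\eta^{A^c}} M_t(\eta^{A^c})\log\bigl(M_t(\eta^{A^c})/N_t(\eta^{A^c})\bigr)$ with $M_t,N_t$ martingales, and both exploit the joint convexity of $F(m,n)=m\log(m/n)$. The difference is in how that convexity is used: the paper simply applies Jensen's inequality to the pair of martingales, obtaining $\bbE[F(M_t,N_t)]\ge F(\bbE M_t,\bbE N_t)=F(M_0,N_0)$ in one line, without any stochastic calculus. Your It\^o computation yields the stronger conclusion that $\Psi_t$ is a genuine submartingale (not just that its expectation increases), at the cost of the extra Hessian computation and the $\varepsilon$-regularization; the paper's Jensen argument sidesteps both, since $F$ is jointly convex and continuous on $\{m\ge 0,\,n>0\}$ with the convention $0\log 0=0$, so no approximation is needed.
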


\begin{proof}
%	Suppose that $\cA=A$. Then
%	\[
%	\r^\eta_{t,\cA}(\xi) = \frac{\r_t(\xi)\IND_{\xi^{A^c}=\eta^{A^c}}}{\r_t(\eta^{A^c})},
%	\] where $\r_t(\eta^{A^c})=\sum_{\eta^A} \r_t(\eta)$ denotes the marginal of $\r_t$ on the blocks $A^c$.
 From \eqref{eq:equico} we have
  	\begin{align}\label{eq:submar1}
		\textstyle{\sum_{\eta \in \Om_0}}\rho_t(\eta)Q_{t,A}f (\eta) \log Q_{t,A}f (\eta) =
		\sum_{\eta^{A^c}}\rho_t(f,\eta^{A^c})\log \frac{\rho_t(f,\eta^{A^c})}{\rho_t(\eta^{A^c})},
	\end{align}
	where $\rho_t(f,\eta^{A^c}) := \sum_{\xi\in\O_0}\r_t(\xi)\IND_{\xi^{A^c}=\eta^{A^c}}f(\xi)$. The martingale property of $\r_t$ shows that $\bbE \rho_t(f,\eta^{A^c}) = \rho_0(f,\eta^{A^c})$ and $\bbE \rho_t(\eta^{A^c}) = \rho_0(\eta^{A^c})$. Thus, using
the joint convexity of $(x,y)\rightarrow x\log \frac xy$ and Jensen inequality, we have
	\begin{align}\label{eq:submar2}
		\bbE\left[\rho_t(f,\eta^{A^c})\log \frac{\rho_t(f,\eta^{A^c})}{\rho_t(\eta^{A^c})}\right] \ge 
\rho_0(f,\eta^{A^c})\log \frac{\rho_0(f,\eta^{A^c})}{\rho_0(\eta^{A^c})}\,.
	\end{align}
	The conclusion \eqref{submarent} follows from \eqref{eq:submar1} and \eqref{eq:submar2}.
	\end{proof}

\subsection{Proof of Theorem \ref{mainthm2} and Corollary \ref{cor2-Ising}}\label{sec:proofsec}
\begin{proof}[Proof of Theorem \ref{mainthm2}]
We change variables from $\si\in\O$ to $\eta\in\O_0$ and write $\r$ instead of $\mu$ for our reference Gibbs measure, see \eqref{isingmatr}. We keep the notation $\ent f$ for the entropy of a function $f:\O_0\mapsto \bbR_+$ with respect to $\r$ and $\ent_A f$ for the entropy of $f$ with respect to the conditional distribution $\r_{A}$. 
The proof is complete if we show that 
\begin{align}\label{sheaineq20}
		\sum_{A \subset V}\alpha_A\,\r\left(\ent_Af\right)\ge c_\delta\,\gamma(\alpha)  \,\ent f ,
	\end{align}
holds for all functions $f:\O_0\mapsto \bbR_+$, with the constant $c_\delta=\delta$ for $\delta\in[\frac1n,1]$, and $c_\delta = \frac{1}{ne^{1-\delta n}}$ for $\delta\in [0,\frac{1}{n})$.   

Fix $f\ge 0$ and $\a\in\cW$. 
Using \eqref{eq:deco} and the notation \eqref{eq:parandom} one has
	\begin{align*}
		\textstyle{\sum_{A\subset V}}&\alpha_A\,\rho\left(\ent_A f\right) =\ent f - \textstyle{\sum_{A\subset V}}\alpha_A\ent  \,Q_{0,A}f \\
		&=\textstyle{\sum_{\eta\in\O_0}}\rho(\eta)f(\eta)\log f(\eta) -\textstyle{\sum_{A\subset V}}\alpha_A\,\textstyle{\sum_{\eta\in\O_0}}\rho(\eta)Q_{0,A}f(\eta) \log Q_{0,A}f(\eta) \,.
	\end{align*}
Appealing to	Lemma \ref{submarlemma} and the martingale property of $\r_t$, 
\begin{align*}
		&\textstyle{\sum_{A\subset V}}\alpha_A\,\rho\left(\ent_A f\right)
		\\&\qquad \geq \textstyle{\sum_{A\subset V}}\alpha_A\,\bbE \left[\textstyle{\sum_{\eta\in\O_0}}\rho_1(\eta)f(\eta)\log f(\eta) - \textstyle{\sum_{\eta\in\O_0}}\rho_1(\eta)Q_{1,A} f(\eta) \log Q_{1,A} f(\eta)\right] \\
		& \qquad= \textstyle{\sum_{A\subset V}}\alpha_A\,\bbE\left[\ent_{\rho_1}f - \ent_{\rho_1}Q_{1,A}f\right] \,. %\\
	\end{align*}
One more application of the decomposition \eqref{eq:deco}, this time for the measure $\r_1$, shows that 
the last line in the above expression equals
\[
\textstyle{\sum_{A\subset V}}\alpha_A\,\bbE \left[\rho_1(\ent_{\rho_{1,A}}f)\right].
\] 
Since the probability measure $\r_1$ is a product over blocks $\eta^i$, $i\in[n]$, we may apply the Shearer inequality to obtain, almost surely	
	\begin{align*}
		\sum_{A \subset V}\alpha_A \,\rho_1\!\(\ent_{\rho_{1,A}}f\) \geq \gamma(\alpha)\,\ent_{\rho_1}f.
	\end{align*}
	Thanks to Lemma \ref{pottscovlemma} we have $\l(\cov[\cT_v\rho_t]) \leq \kappa(t)$ for all $t \in [0,1]$, $v\in\bbR^{qn}$,  where
	\begin{align*}
		\kappa(t):= \min\left\{ \frac{1}{2(1-(1-\delta)(1-t))}, \frac{n}{2} \right\}.
	\end{align*}
	Thus, the estimate in  Lemma \ref{prop1} holds with
	\begin{align*}
		a(t) = \min\left\{ \frac{1-\delta}{1-(1-\delta)(1-t)}, (1-\delta)n \right\}.
	\end{align*}
	If $\delta \in [\frac{1}{n},1]$, then we have $\frac{1-\delta}{1-(1-\delta)(1-t)} \le \frac{1-\delta}{\delta} \le (1-\delta)n$. In this case, we obtain that $a(t) = \frac{1-\delta}{1-(1-\delta)(1-t)}$ 
	and $\int_0^1a(t) dt = -\log \delta$.
	Meanwhile, if $\delta \in [0,\frac{1}{n})$, then 
	\begin{align*}
		a(t) = 
		\begin{cases}
			(1-\delta)n, & \text{if } 0 \le t \le \frac{\frac{1}{n}-\delta}{1-\delta}; \\
			\frac{1-\delta}{1-(1-\delta)(1-t)}, & \text{if } \frac{\frac{1}{n}-\delta}{1-\delta} < t \le 1. \\
		\end{cases}
	\end{align*}
	We thus obtain in this case that
	\begin{align*}
		\int_0^1a(t) dt &= \int_0^{\frac{\frac{1}{n}-\delta}{1-\delta}} (1-\delta)n dt + \int_{\frac{\frac{1}{n}-\delta}{1-\delta}}^1 \frac{1-\delta}{1-(1-\delta)(1-t)} dt \\
		&= 1-\delta n + \log n.
	\end{align*}
	Therefore, we deduce that
	\begin{align}\label{brownin1}
		\bbE\left[\ent_{\rho_1} f\right] \geq c_\delta \ent_\rho(f),
	\end{align}
	where
	\begin{align*}
		c_\delta= 
		\begin{cases}
			\frac{1}{ne^{1-\delta n}}, & \text{if } \delta\in [0,\frac{1}{n}); \\
			\delta, & \text{if } \delta\in [\frac{1}{n},1]. \\
		\end{cases}
	\end{align*}
	The desired estimate \eqref{sheaineq20} then follows.  
	This ends the proof of Theorem \ref{mainthm2}.
\end{proof}

%\begin{proof}[Proof of Corollary \ref{cor2-Ising}]
	The proof of Corollary \ref{cor2-Ising} is the same as for Theorem \ref{mainthm2}, with the only difference that, instead of using Lemma \ref{pottscovlemma}, we apply the better covariance estimate
	\begin{align}\label{eq:better-cov-est}
		\l\(\cov[\mathcal{T}_v\rho_t]\) \leq \min \left\{ \frac1{2(1-(1-\delta)(1-t))}, c_1 \sqrt{n} \right\},
	\end{align}
	where $c_1>0$ is an absolute constant.
        Namely, we replace the trivial upper bound of $n/2$ in \eqref{eq:covrov} by a tighter bound of $O(\sqrt{n})$ in \eqref{eq:better-cov-est}.
        Everything else of the proof remains exactly the same (simply replace $n$ with $2c_1\sqrt n$ in the above argument).
        We omit the details to avoid duplications.
	
    The $O(\sqrt{n})$ upper bound in \eqref{eq:better-cov-est} was established in \cite[Lemma 3.27]{CCYZ25} for general rank-one Ising models, but it can be readily obtained from even earlier works \cite{LLP10,DLP09} on the Curie-Weiss model.
	Crucially, it claims that for the critical Curie-Weiss model (i.e., $\beta = 1$) the maximum eigenvalue of its covariance matrix scales as $O(\sqrt{n})$, significantly better than the trivial $O(n)$ bound.
%\end{proof}
	%\newpage

	\subsection{Proof of Theorem \ref{th:SpInd} and Theorem \ref{th:critical-Ising}}
	We repeat the argument in the proof of Theorem \ref{mainthm2}, but this time we use a different bound for the function $a(s)$ appearing in Lemma \ref{prop1}. That is, rather than using Lemma \ref{pottscovlemma}, we use the following connection to spectral independence. Here we adopt the definition from \cite{CLV21,BCCPSV22}, which in turn is a generalized version of the original spectral independence definition from \cite{ALO}. Let $\cP_+(\O_0)$ denote the set of $\r\in\cP(\O_0)$ such that $\r(\eta^i_a = 1)>0$ for all $i\in[n], a\in[q]$. 
		\begin{definition}\label{def:SpInd}
	The \emph{influence matrix} of $\r\in\cP_+(\O_0)$ is the $qn\times  qn$ matrix $J[\r]$ defined by $J[\r](i,a;i,b) = 0$ for all $i\in[n], a,b\in[q]$, and
\begin{align}\label{def:J}
J[\r](i,a;j,b) = \r(\eta^j_b=1 \mid \eta^i_a = 1) - \r(\eta^j_b = 1) \quad \text{~for~} i \neq j.
\end{align}
For $K\ge0$, the distribution $\r$ is said to be $K$-spectrally independent if $\l(J[\r])$, the maximum eigenvalue of $J[\r]$, satisfies $\l(J[\r])\le K$.   
	\end{definition}
	We refer to \cite{CLV21,BCCPSV22} for various examples of spectrally independent spin systems, and for an illustration of the techniques, such as correlation decay and coupling estimates, that can be used to obtain bounds on $ \l(J[\r])$. For the present work, as in \cite{cheneldan}, all we need is the following simple relation between $J[\r]$ and $\cov[\r]$. 
	\begin{lemma}\label{lem:covJ}
For any $\r\in\cP_+(\O_0)$, the eigenvalues of $J[\r]$ are real and \[\l(\cov[\r])\le\l(J[\r])+ \frac12.\]  
	\end{lemma}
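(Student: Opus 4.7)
The approach is to relate $\cov[\r]$ and $J[\r]$ by a single matrix identity and then reduce the eigenvalue comparison to the within-block bound from Lemma \ref{lem:cov}. Let $D$ denote the $qn\times qn$ diagonal matrix with entries $D(i,a;i,a)=\r(\eta^i_a=1)$, and let $B$ denote the block-diagonal matrix whose $i$-th diagonal block is the $q\times q$ within-block covariance $(\cov[\r](i,a;i,b))_{a,b\in[q]}$. A direct conditioning computation gives, for $i\neq j$, $\cov[\r](i,a;j,b)=\r(\eta^i_a=1)\,J[\r](i,a;j,b)$, while for $i=j$ the matrix $J[\r]$ vanishes by definition. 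This yields the identity
\begin{align*}
\cov[\r]=DJ[\r]+B.
\end{align*}

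Reality of the spectrum of $J[\r]$ is then immediate: $DJ[\r]=\cov[\r]-B$ is symmetric and $D$ is positive definite (because $\r\in\cP_+$), so the similarity $D^{1/2}J[\r]D^{-1/2}=D^{-1/2}(DJ[\r])D^{-1/2}$ exhibits $J[\r]$ as conjugate to a symmetric matrix, with the same real spectrum.

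For the eigenvalue bound, pick a unit top eigenvector $u$ of $\cov[\r]$ and split
\begin{align*}
\l(\cov[\r])=\langle u,DJ[\r]u\rangle+\langle u,Bu\rangle.
\end{align*}
Applying Lemma \ref{lem:cov} block-by-block gives $\langle u,Bu\rangle\le \tfrac12$. For the first term, substituting $v=D^{1/2}u$ turns it into $\langle v,D^{1/2}J[\r]D^{-1/2}v\rangle\le \l(J[\r])\,\langle u,Du\rangle$. Since the entries of $D$ all lie in $(0,1)$ one has $D\preceq \Id$, and hence $\langle u,Du\rangle\le 1$; combined with $\l(J[\r])\ge 0$, which follows from $\operatorname{tr}(J[\r])=0$ (the diagonal entries of $J[\r]$ all vanish), this gives $\langle u,DJ[\r]u\rangle\le \l(J[\r])$. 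Adding the two estimates proves the claim.

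The only delicate point is converting the natural symmetry of the weighted matrix $DJ[\r]$ into a bound stated in terms of $\l(J[\r])$ itself. This is handled by the change of variables $v=D^{1/2}u$, where the positive semidefinite comparison $D\preceq \Id$ together with the sign constraint $\l(J[\r])\ge 0$ is what allows the inequality to be oriented correctly; this last nonnegativity is crucial and is forced by the vanishing diagonal of $J[\r]$.
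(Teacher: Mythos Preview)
Your proof is correct and follows essentially the same approach as the paper: both arguments decompose $\cov[\r]$ as $DJ[\r]$ plus the block-diagonal within-block covariance, use the similarity $D^{1/2}J[\r]D^{-1/2}$ to obtain reality of the spectrum, bound the block-diagonal part by Lemma~\ref{lem:cov}, and control the $DJ[\r]$ term via the Rayleigh quotient of the symmetrized matrix together with $D\preceq\Id$. Your version is slightly more explicit in one respect: you note that the step $\l(J[\r])\langle u,Du\rangle\le \l(J[\r])$ requires $\l(J[\r])\ge 0$, and justify this via $\operatorname{tr}(J[\r])=0$, a point the paper leaves implicit.
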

	\begin{proof} With the notation $p^i_a:=\r(\eta^i_a=1)$, we have 
	%Note that if $i\neq j$ then 
	$J[\r](i,a;j,b) =\cov[\r](i,a;j,b)/p^i_a$ for all $i\neq j$, and %while if $i=j$ then 
	\[
	J[\r](i,a;i,b) = 0 = \frac{\cov[\r](i,a;i,b)}{ p^i_a} - \IND_{a=b} + p^i_b\,.
	\]
	Let $D$ be the diagonal matrix with $D(i,a;j,b)=p^i_a\IND_{i=j, a=b}$. Then 
	\[
	J[\r] = D^{-1}\cov[\r]  - \IND + W,
	\] 
	where $W(i,a;j,b) = \IND_{i=j}p^i_b$, for all $a,b\in[q]$, $i,j\in[n]$.
	It follows that $J[\r]$ and
	\[J'[\r]:=D^{1/2}J[\r]D^{-1/2} = D^{-1/2}\cov[\r] D^{-1/2}- \IND + W'\] have the same eigenvalues, where $ W'(i,a;j,b) = \IND_{i=j}({ p^i_a p^i_b})^{1/2}$. Since $J'[\r]$ is 
	sym\-metric, the spectrum of $J[\r]$ is real. Moreover, $\cov[\r] = D^{1/2}J'[\r] D^{1/2}+ D - \bar W$, where $\bar W(i,a;j,b) = \IND_{i=j} p^i_a p^i_b$. Using Lemma \ref{lem:cov}, the largest eigenvalue of the block diagonal matrix $D-\bar W$ can be estimated by $\frac12$. Moreover, since $p^i_a\in[0,1]$,  one has		 
	\begin{align}\label{eq:covJ1}
\scalar{u}{D^{1/2}J'[\r] D^{1/2}u}\le \l(J'[\r])
\scalar{D^{1/2}u}{D^{1/2}u} \le  \l(J'[\r])\scalar{u}{u}\,,
\end{align}
for any $u\in\bbR^{qn}$.
Since $\l(J'[\r])=\l(J[\r])$, it follows that $\l(\cov[\r])\le\l(J[\r])+ \frac12$.
	\end{proof}

\bigskip
\begin{proof}[Proof of Theorem \ref{th:SpInd}]
%Proceeding as in the proof of Theorem \ref{mainthm2}, and using 
From \cite{CLV21} we know that if $\b$ is in the tree uniqueness regime with gap $\d\in(0,1)$, then it is $K$-spectrally independent with $\l(J[\cT_v\r])\le K = O(1/\d)$, uniformly in $v\in\bbR^{qn}$, where $\r$ is given in \eqref{isingmatr}.  
If $a(t)$ is the function from Lemma \ref{prop1}, we claim that
%$a(t)\le 2\l(\G)(K+\frac12)$, 
$a(t) = O(\frac{\l(\G)}{\delta+t})$,
for all $t\in[0,1]$. 
To see this, notice that the expression \eqref{stoclocmeasure} characterizes an Ising model with the effective inverse temperature given by $(1-t) \beta$. We then deduce that
\begin{align*}
    \tanh((1-t) |\beta|)
    &\overset{\text{(i)}}{\le} \tanh (|\beta|) - \frac{t|\beta|}{2} 
    \overset{\text{(ii)}}{\le} \tanh (|\beta|) \left( 1- \frac{t}{2} \right) \\
    &\overset{\text{(iii)}}{\le} \frac{(1-\delta) (1- \frac{t}{2} )}{\Delta-1} 
    \overset{\text{(iv)}}{\le} \frac{1 - \frac{\delta+t}{2}}{\Delta-1},
\end{align*}
where (i) follows from $\tanh(|\beta| - x) \le \tanh (|\beta|) - x/2$ for all $|\beta| \le 0.6$ and $0 \le x \le |\beta|$ by convexity, (ii) follows from $\tanh(|\beta|) \le |\beta|$, (iii) follows from our assumption \eqref{eq:tU}, and finally (iv) holds since $t\le 1$.
Therefore, such an Ising model is in the tree uniqueness regime with gap $(\delta+t)/2$ and
by Lemma \ref{lem:covJ} we obtain $a(t) = O(\frac{\l(\G)}{\delta+t})$.

To estimate $\l(\G)$, note that by adding a diagonal term we can take $\G(i,a;j,b):= |\beta|\Delta \IND_{i=j}\IND_{a=b}+\beta A(G)_{i,j}\IND_{a=b}$, which makes it a positive semidefinite matrix with maximum eigenvalue bounded by $2|\b|\D$. Therefore we may estimate $\l(\G)\le2|\b|\D$. The condition \eqref{eq:tU} implies that $   |\b|\D$ is bounded by a universal constant, and therefore 
%$a(t)\le C_0/\d$ 
$a(t)\le \frac{c_0}{\d+t}$ 
for all $t\in[0,1]$, for some universal constant $c_0$. 

Repeating the argument in Section \ref{sec:proofsec} we obtain
\begin{align}\label{sheaineq2001}
%		\sum_{A \subset V}\alpha_A\,\r\left(\ent_Af\right)\ge \gamma(\alpha) \,e^{-C_0/\d}\,\ent f ,
	\sum_{A \subset V}\alpha_A\,\r\left(\ent_Af\right)
	\ge \gamma(\alpha) \exp\left( - \int_0^1 \frac{c_0}{\delta+t} dt \right) \ent f 
	\ge \gamma(\alpha) (c_1 \delta^{c_2}) \ent f ,
\end{align}
for all functions $f:\O_0\mapsto \bbR_+$, where $c_1,c_2>0$ are universal constants.  
This ends the proof of the strong approximate Shearer inequality. The mixing time estimates follow from \eqref{fact_contr_mix}. 
\end{proof}
%
% the following result. 
%	\begin{theorem}\label{th:SpIndPotts}
%Consider the Potts model \eqref{For the ferromagnetic Ising model on a graph $G$ with maximum degree $\D\ge 3$, with $\b$ in the tree uniqueness regime with gap $\d\in(0,1)$, 
%the strong approximate Shearer inequality holds with constant $C_\d=e^{O(1/\d)}$. In particular,  for any $\a\in\cW$, the $\a$-weighted block dynamics has mixing time $C_\d\, \g(\a)^{-1}\times O(\log n)$. \end{theorem}
%
%In this case $\mu$ is $O(1/\d)$-spectrally indep. Therefore, using $\l(\cov(\cT_v\r))\le 1 + \l(J)$... one has $C_\a\le \exp O(1/\d)$.....
%
%	\newpage

\begin{remark}\label{rem:SpIndPotts}
We remark that, using the criterion in \cite[Theorem 4.13]{BCCPSV22} for spectral independence of ferromagnetic Potts models, a result analogous to Theorem \ref{th:SpInd} can be obtained for the  $q$-Potts model on a graph $G$ with maximum degree $\D$ provided $\b\in[0,\b_*)$ where $\b_*=\max\{\b_0,\b_1,\b_2\}$, where $\b_0= 2/\D$, $\b_1= (1/\D)\log((q-1)/\D)$, and $\b_2=\frac1{\D-1}\log(q)$. However, the dependence on $\D$ of the corresponding spectral independence constant $K$ is not optimal in this case; see \cite{BCCPSV22} for a more detailed discussion.  
\end{remark}

\begin{proof}[Proof of Theorem \ref{th:critical-Ising}]
	We proceed similarly as the proof of Theorem \ref{th:SpInd}.
	The upper bound for $a(t)$ is given by 
	\begin{align*}
		\int_0^1 a(t) dt = O(1) + \frac{\Delta}{\Delta-2} \log n,
	\end{align*}
	see \cite{CCYZ25}, Proof of Theorem 3.23.
	Thus, following the argument in Section \ref{sec:proofsec} we deduce that for all functions $f:\O_0\mapsto \bbR_+$,
%	\begin{align*}
%		\exp\left( \int_0^1 a(t) dt \right) = \exp\left( O(1) + \frac{\Delta}{\Delta-2} \log n \right) = O\left( n^{1+\frac{2}{\Delta-2}} \right).
%	\end{align*}
	\begin{align}
		\sum_{A \subset V}\alpha_A\,\r\left(\ent_Af\right)
		\ge \gamma(\alpha) \exp\left( - \int_0^1 a(t) dt \right) \ent f
		\ge \gamma(\alpha) \Omega\left( n^{-1-\frac{2}{\Delta-2}} \right) \ent f.
	\end{align}
	This establishes the strong approximate Shearer inequality with $C_n = O\left( n^{1+\frac{2}{\Delta-2}} \right)$, and the mixing time bound follows immediately.
\end{proof}

	\section{Proof of Theorem \ref{mainthm3} and  Theorem \ref{mainthm4}}
The proofs follow the same broad strategy as in Theorem \ref{mainthm2}, namely we first reduce to binary spins and then apply a stochastic localization process in order to obtain a new measure without interaction and with suitable random external fields. However, one has to pay attention to the fact that while the interaction between components is eventually deleted, the interaction within components may still be strong due to the possibly nontrivial nature of the original distributions $\mu_i$. The latter will be controlled using the constant $R$ defined in \eqref{def:R_i}. 
 
 As in Section \ref{sec:reduction}
 we start with a change of variables which maps the spin system on $\O=[q_1]^{L_1}\times\cdots\times[q_n]^{L_n}$ to a spin system on $\{0,1\}^{M}$, where $M:=q_1L_1 + \cdots q_nL_n$.
We define the new single spin spaces $\hat{\Om}_{0,i} = \left\{x\in\{0,1\}^{q_i}:\, \sum_{a=1}^{q_i}x_{a} =1\right\}$. In this way, $[q_i]$ is in one to one correspondence with $\hat{\Om}_{0,i}$,
and $\O$ is in one to one correspondence with $\Om_0:=\times_{i=1}^n\hat{\Om}_{0,i}^{L_i}$. Thus, 
a spin configuration $\si=\{\si_i(\ell), i\in[n],\ell\in [L_i]\}$ is associated to the new variables $\eta=(\eta^{i,\ell}_a)$, where $\eta^{i,\ell}_a:=\IND_{\si_i(\ell)=a}$, for all $i\in[n], \ell\in [L_i]$, and $a\in[q_i]$. 
The distribution $\mu_i$ over $\O_i=[q_i]^{L_i}$ is equivalent to the distribution $\r_i$ over $\hat\O_{0,i}$. We also write $\eta^i$, $i\in[n]$ for the ``block'' variable $\eta^i=(\eta^{i,\ell}_a,\,\ell\in[L_i],a\in[q_i])$. 
For $x,y\in\bbR^{M}$ we use the notation % $x=(x^{i,\ell}_a, i\in[n],\ell\in[L_i], a\in[q_i])$, %a y=(y^{i,\ell}_a, i\in[n], a\in[q])$,  and 
\[
\scalar{x}{y} = \textstyle{\sum_{i=1}^n\sum_{\ell=1}^{L_i}\sum_{a=1}^{q_i}}x^{i,\ell}_ay^{i,\ell}_a.
\]
The probability measure $\mu$ from \eqref{spinsys}, in the new variables $\eta\in\O_0$ becomes the probability $\r\in \cP(\O_0)$ given by
\begin{align}\label{isingmatrgen}
	\rho(\eta) \propto \exp\left(\scalar{\eta}{\G\eta}+\scalar{h}{\eta}\right)\prod_{i=1}^n\r_i(\eta^i)\,,
\end{align}
where $\G((i,\ell),a;(j,k),b) = \phi_{i,\ell;j,k}(a,b)$ is the matrix defined in \eqref{def:Gammagen},  and $h\in\bbR^{M}$, defined by $h^{i,\ell}_a := \psi_{i,\ell}(a)$, are the new external fields. %Here $\eta^i$ stands for the block variables $\eta^i = (\eta^{i,\ell}_a,\,\ell\in[L_i],a\in q_i)$. 
We note that the constraint $\ind_{\{\eta \in \Om_0\}}$ appearing in \eqref{isingmatr} here is implicit in the term $\prod_{i=1}^n\r_i(\eta^i)$ since each $\r_i$ is a probability on $\O_{0,i}$, and thus $\r(\eta)$ gives zero mass to $\eta\notin\O_0$.  Note also that if $\G=0$ then this measure is a product over the block variables $\eta^i$. 

	Next, we run the stochastic localization process on the new variables $\eta\in\O_0$, with the same construction given in Section \ref{sec:stloc}. 
When %apply it with 
$\r$ is given by the expression \eqref{isingmatrgen}, % for the initial distribution $\r$, 
we have that $\r_t\in\cP(\O_0)$ satisfies
\begin{align}\label{stoclocmeasuregen}
	\rho_t(\eta) \propto \exp\left((1-t)\scalar{\eta}{\Gamma\eta} + \scalar{y_t+h}{\eta}\right)\prod_{i=1}^n\r_i(\eta^i)\,, \ \ \ \ \ \ t \geq 0,
\end{align}
where $y_t := (2\G)^{\frac12}B_t + 2\Gamma\int_0^tm(F_s)ds$. Here $B_t$ is a Brownian motion on $\bbR^M$ and $m(F_s)$ is defined as in \eqref{eq:mt}.
Note that when $t=1$, we have a random product measure over the blocks $\eta^i$. The only difference with respect to   \eqref{prodmeasure} is that now 
\begin{align}\label{prodmeasuregen}
	\rho_1(\eta)=\prod_{i=1}^n\rho_{1,i}(\eta^i)\,,\qquad  \rho_{1,i}(\eta^i)\propto \exp\left(\scalar{y_1^i+h^i}{\eta^i}\right)\r_i(\eta^i)\,.
%	\rho_{1,i}(\eta^i)\propto \exp\left(\scalar{y_1^i+h^i}{\eta^i}\right)\IND_{\{\eta^i \in \hat\Om_0\}}\,,
\end{align}
%$\rho_{1,i}(\eta^i)\propto \exp\left(\scalar{y_1^i+h^i}{\eta^i}\right)\r_i(\eta^i)$ for each $i\in[n]$.
Moreover the bound in Lemma \ref{prop1} holds as it is. The next step is to estimate the covariance, as in Lemma \ref{pottscovlemma}.
Here we need to replace the uniform distribution over $\O_0$ with the product  $\nu:=\otimes \r_i$. This makes the analysis more involved, and it is the reason for the appearance of the constant $R$ defined in \eqref{def:R}. However, with this notation we have again \eqref{eq:decrho}
 and \eqref{eq:covtot}. 
Lemma \ref{pottscovlemma} now takes the following form.

\begin{lemma}\label{pottscovlemmagen}
	Consider the spin system $\rho\in\cP(\O_0)$ given by \eqref{isingmatrgen}, and assume that $\G$ is positive definite with $R\l(\G) <1$. Then for any $v \in \bbR^{M}$, $t\in[0,1]$, one has
	\begin{align}\label{eq:covrovgen}
\l\(\cov[\mathcal{T}_v\rho_t]\)\leq \frac{R}{2}\(\frac{1}{1-(1-t)R\l(\G))}\)\,.
	\end{align}
\end{lemma}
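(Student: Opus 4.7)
\smallskip
\textbf{Proof plan for Lemma \ref{pottscovlemmagen}.} The plan is to adapt the proof of Lemma \ref{pottscovlemma} to the multicomponent setting, with the product measure $\nu := \otimes_{i=1}^n \rho_i$ playing the role previously played by the uniform measure on $\Omega_0$. First, I would reduce to $t=0$: reading off \eqref{stoclocmeasuregen}, the measure $\mathcal{T}_v \rho_t$ has the same form as \eqref{isingmatrgen} but with the quadratic form $\langle \eta, \Gamma \eta \rangle$ replaced by $(1-t)\langle \eta, \Gamma \eta \rangle$ and a new external field $y_t + h + v$. Hence it suffices to bound $\lambda(\mathrm{cov}[\rho_v])$ for $\rho_v(\eta) \propto \exp(\langle \eta, \Gamma \eta\rangle + \langle v, \eta\rangle)\prod_i \rho_i(\eta^i)$ and any $v \in \mathbb{R}^M$, and the general estimate will follow by substituting $(1-t)\Gamma$ for $\Gamma$.

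Next, I would run the Gaussian integration trick exactly as in \eqref{eq:covrov12}: this expresses $\rho_v$ as a mixture $\rho_v(\eta) = \int \mathcal{T}_{y+v} \nu(\eta)\, \hat f(y)\, dy$, where $\hat f$ is the probability density proportional to $\exp\!\bigl(-\tfrac12 \langle y,(2\Gamma)^{-1} y\rangle + \log Z_\nu(y+v)\bigr)$ and $Z_\nu(w) := \sum_\eta \nu(\eta) \exp(\langle w, \eta\rangle)$. The law of total covariance then yields, as in \eqref{eq:covtot},
\begin{align*}
\mathrm{cov}[\rho_v] \;=\; \mathcal{C} \;+\; \int dy\, \hat f(y)\, \mathrm{cov}[\mathcal{T}_{y+v}\nu],
\end{align*}
where $\mathcal{C}$ is the covariance of the conditional means $w(y) := \sum_\eta \mathcal{T}_{y+v}\nu(\eta)\,\eta$ under $\hat f$.

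For the second term, since $\nu$ factorizes over the components $\eta^i$, so does each tilted measure $\mathcal{T}_{y+v}\nu$; the matrix $\mathrm{cov}[\mathcal{T}_{y+v}\nu]$ is block diagonal in $i$, and each block is a covariance matrix of $\rho_i$ tilted by a linear potential. By the definition \eqref{def:R_i}--\eqref{def:R} of $R$, each such block has maximum eigenvalue at most $R/2$, and thus the whole integral term has spectral radius at most $R/2$. For the matrix $\mathcal{C}$, I would invoke Brascamp--Lieb as in \eqref{eq:BLineq}--\eqref{eq:BLineq3}. Since $\mathrm{Hess}\log Z_\nu(y+v) = \mathrm{cov}[\mathcal{T}_{y+v}\nu] \le (R/2)\mathrm{Id}$ by the same bound, the Hessian estimate becomes
\begin{align*}
-\mathrm{Hess}\log \hat f(y) \;\ge\; \tfrac12 (2\Gamma)^{-1} - \tfrac{R}{2}\mathrm{Id} \;\ge\; \frac{1 - R\lambda(\Gamma)}{2\lambda(\Gamma)}\,\mathrm{Id},
\end{align*}
which is strictly positive by assumption. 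Meanwhile, for $g_u(y) := \langle u, w(y)\rangle$, the partial derivative $\partial w^{i,k}_b / \partial y^{j,\ell}_a$ vanishes when $i \ne j$ and equals the single-component covariance $\Sigma^i_y(\ell,a;k,b)$ otherwise, so $\|\nabla g_u(y)\|_2^2 \le (R/2)^2 \|u\|_2^2$ by the same $R/2$ bound applied block by block. Brascamp--Lieb then yields $\langle u, \mathcal{C} u\rangle \le \frac{R^2 \lambda(\Gamma)}{2(1-R\lambda(\Gamma))}\|u\|_2^2$. Adding the two contributions gives $\lambda(\mathrm{cov}[\rho_v]) \le \frac{R}{2} \cdot \frac{1}{1 - R\lambda(\Gamma)}$, and the general-$t$ statement \eqref{eq:covrovgen} follows by the initial reduction.

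The main conceptual point (rather than obstacle) is the twin role played by $R$: it simultaneously controls the block-diagonal ``micro'' contribution $\mathrm{cov}[\mathcal{T}_{y+v}\nu]$ and the ``macro'' Brascamp--Lieb constant via the Hessian bound. Keeping the factor of $2$ in the definition $R = 2 \max_i \sup_\psi \lambda(\Sigma_i^\psi)$ is precisely what makes the two contributions combine cleanly into $\frac{R}{2}\cdot\frac{1}{1-R\lambda(\Gamma)}$, recovering the bound of Lemma \ref{pottscovlemma} when $R = 1$ (i.e.\ when the components are product measures).
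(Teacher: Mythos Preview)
Your proposal is correct and follows essentially the same argument as the paper: reduce to $t=0$, write $\rho_v$ as a mixture of $\mathcal{T}_{y+v}\nu$ via the Gaussian identity, apply the law of total covariance, bound the block-diagonal part by $R/2$, and control $\mathcal{C}$ via Brascamp--Lieb using the Hessian lower bound and the gradient estimate $\|\nabla g_u\|_2^2\le (R/2)^2\|u\|_2^2$. One small slip: in your displayed Hessian inequality the term should be $(2\Gamma)^{-1}$ rather than $\tfrac12(2\Gamma)^{-1}$ (the Hessian of $\tfrac12\langle y,(2\Gamma)^{-1}y\rangle$ is $(2\Gamma)^{-1}$), but your stated lower bound $\frac{1-R\lambda(\Gamma)}{2\lambda(\Gamma)}$ is the correct one and the rest of the computation is unaffected.
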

\begin{proof}
As in the proof of Lemma \ref{pottscovlemma} it suffices to 
 show the inequality \eqref{eq:covrovgen} with $t=0$ and with $\mathcal{T}_v\r_t$ replaced by 
 \begin{align}\label{eq:covrov11gen}	
 \rho_v(\eta) \propto\exp\left(\scalar{\eta}{\G\eta})+\scalar{v}{\eta}\right)\prod_{i=1}^n\r_i(\eta^i)\,.
\end{align}
We have the decomposition \eqref{eq:covtot} where now $\nu=\otimes \r_i$. We start with the analysis of the term $\cov [\mathcal{T}_{x}\nu]$, $x\in\bbR^M$. Since $\nu$ is a product over blocks $\eta^i$ it suffices to bound $\l(\cov[\cT_z\r_i])$ for any $z\in\bbR^{q_iL_i}$, for all $i\in[n]$.
 To this end, recalling the definition of the constant $R$ in \eqref{def:R}, we have 	$\l(\cov[\cT_z\r_i]) \le R/2$, for all $i\in[n]$, and $z\in\bbR^{q_iL_i}$. Thus, the bound \eqref{eq:covbo1} now takes the form
   \begin{align}\label{Rbound}
	\l(\cov[\cT_x\nu]) \le R/2\,,\qquad  \,x\in\bbR^{M}\,.
\end{align}
Therefore the last term in \eqref{eq:covtot} has maximum eigenvalue bounded by $R/2$. 
	 It remains to estimate the matrix $\cC$. Adapting  the notation introduced before, for any $u \in\bbR^{M}$ we write
	\begin{align}\label{eq:covygen}
		\scalar{u}{\cC u} &= \textstyle{\sum_{(i,\ell)}\sum_{(j,k)}\sum_{a,b}}u^{i,\ell}_au^{j,k}_b\cC((i,\ell),a;(j,k),b)=\var\left(g_u(X)\right)\,, 
			\end{align}
where $X$ is the $\bbR^{M}$-valued random variable with density $\hat f$ %$\var$ is the variance functional, 
and %we write 
$g_u(y):=\sum_{(i,\ell),a} u^{i,\ell}_{a}  w^{i,\ell}_a(y)$, with $w^{i,\ell}_a(y):=\sum_{\eta\in\O_0}\mathcal{T}_{y + v}\nu(\eta)\eta^{i,\ell}_a$, $i\in[n]$, $\ell\in[L_i]$, $a\in[q_i]$.
We may use again the Brascamp-Lieb estimate \eqref{eq:BLineq}.
Thus, we need to control
the norm $\|\grad g_u(y)\|_2^2$,
	where $\grad g_u(y)\in\bbR^{M}$ denotes the vector $[\grad g_u(y)]^{i,\ell}_a = \frac{\partial}{\partial y^{i,\ell}_a}g_u(y)$. In this case,
	\[
	[\grad g_u(y)]^{i,\ell}_a = \sum_{b=1}^{q_i} \sum_{k=1}^{L_i}u^{i,k}_{b}  \frac{\partial w^{i,k}_b(y)}{\partial y^{i,\ell}_a}=\sum_{b=1}^{q_i} \sum_{k=1}^{L_i}u^{i,k}_{b}  \cC^{i}(\ell,a;k,b)\,, %=c^2 \,(C^iu^i)_a\,.\
	\] 
where, for fixed $i\in[n]$, $\cC^i := (\cC^{i}(\ell,a;k,b),\,\ell,k\in[L_i],a,b\in[q_i])$ is the $q_iL_i\times q_iL_i$ matrix given 
by
\[
\cC^{i}(\ell,a;k,b) = \mathcal{T}_{y + v}\nu(\eta^{i,\ell}_a\eta^{i,k}_b) -\mathcal{T}_{y + v}\nu(\eta^{i,\ell}_a)\mathcal{T}_{y + v}\nu(\eta^{i,k}_b)\,.   
\] 
Therefore, by the definition %of the constant $R$ in 
\eqref{def:R_i}, the maximum eigenvalue of $\cC^i$ is bounded by $R/2$,  and we obtain, for all $y\in\bbR^{M}$,  
\[
	\|\grad g_u(y)\|_2^2=\sum_{(i,\ell),a} ([\grad g_u(y)]^{i,\ell}_a)^2 = \sum_{(i,\ell),a} (\cC^iu^i)^2(\ell,a) \le \frac{R^2}{4}\scalar{u}{u}\,.
	\] 
From \eqref{eq:BLineq} we conclude 	$\scalar{u}{\cC u} \leq \frac{R^2}{4h} \scalar{u}{u}$,
	where $h$ is a lower bound on the Hessian of $-\log \hat f$ as an $M\times M$ quadratic form. We have 
	\begin{align*}
-\log \hat f(y)= \frac{1}2\scalar{y}{(2\G)^{-1}y} - \log\left(Z\left(y + v\right)\right) + {\rm const.}
	\end{align*}
	Since ${\rm Hess}\log\left(Z\left(y + v\right)\right) = \cov[\mathcal{T}_{ y + v}\nu]$, by \eqref{Rbound} one has
	 ${\rm Hess}\log\left(Z\left(y + v\right)\right)\le  \frac{R}2{\rm Id}$. Moreover, ${\rm Hess}\scalar{y}{(2\G)^{-1}y}\ge \l(\G)^{-1}{\rm Id}$. Therefore we obtain the quadratic form bound
		\begin{align}\label{eq:BLineq31}
-{\rm Hess}\log \hat f(y) \ge \frac{1}2
\left(\frac{1}{\l(\G)} - R\right){\rm Id}.
	\end{align}
	%Summarizing, 
    Hence, we may take $h =  \frac{1}2
\left(\frac{1}{\l(\G)} - R\right)$ and then \eqref{eq:BLineq2} becomes $\scalar{u}{\cC u} \leq\frac{ R^2\l(\G)}{2(1-R\l(\G))}\scalar{u}{u}$. Together with the bound \eqref{Rbound} and \eqref{eq:covtot}, this concludes the proof of Lemma \ref{pottscovlemmagen}. 
\end{proof}
With these preparations we are ready to finish the proof of  Theorem \ref{mainthm3}.

 \subsection{Proof of Theorem \ref{mainthm3}}
We proceed as in the proof of Theorem \ref{mainthm2}. 
Appealing to 	Lemma \ref{submarlemma} and the martingale property of $\r_t$, 
\begin{align}\label{eq:b231}
		&\textstyle{\sum_{A\subset [n]}}\alpha_A\rho\left(\ent_A f\right)
\ge \bbE\sum_{A \subset [n]}\alpha_A \rho_1\left(\ent_{\rho_{1,A}}f\right).
	\end{align}
Since the probability measure $\r_1$ is a product over blocks $\eta^i$, $i\in[n]$, we may apply the Shearer inequality to these block variables to obtain almost surely
	\begin{align}\label{eq:b232}
		\sum_{A \subset [n]}\alpha_A \rho_1\left[\ent_{\rho_{1,A}}f\right] \geq \gamma(\alpha)\,\ent_{\rho_1}f.
	\end{align}
	Thanks to Lemma \ref{pottscovlemmagen} we have $\l(\cov[\cT_v\rho_t]) \leq \kappa(t)$ for all $t \in [0,1]$, $v\in\bbR^{M}$,  where $\kappa(t):= \frac{R}2\left(\frac{1}{1-(1-t  )R\l(\G)}\right)$. Thus, the estimate in  Lemma \ref{prop1} holds with $a(t)=\frac{R\l(\G)}{1-(1-t  )R\l(\G)}$. Since $\int_0^1a(t) dt = -\log(1-R\l(\G))$ we obtain 
	\begin{align}\label{brownin1gen}
		\bbE\left[\ent_{\rho_1} f\right] \geq (1-R\l(\G))\ent_\rho(f),
	\end{align}
	and the desired estimate \eqref{sheaineq2s} follows from \eqref{eq:b231}-\eqref{brownin1gen}.

 \subsection{Proof of Theorem \ref{mainthm4}}
 As already mentioned, even the case $\G=0$ of the statement requires some nontrivial analysis. We start with this case and then use the stochastic localization strategy to obtain the stated result. 
 \begin{lemma}\label{lem:ShearTenso}
	Suppose that $\mu=\otimes_{i=1}^n \mu_i\in\cP(\O)$ is a product of arbitrary components $\mu_i\in\cP(\O_i)$.
If $\mu_i$ satisfies the approximate Shearer inequality with constant $C_i$, for each $i\in[n]$, then $\mu$ satisfies the approximate Shearer inequality with constant $C=\max_iC_i$. 
\end{lemma}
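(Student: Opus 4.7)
I would proceed by induction on $n$. For the inductive step, writing $\mu = \mu_1 \otimes \tilde\mu$ with $\tilde\mu := \otimes_{j\ge 2}\mu_j$ (which satisfies the approximate Shearer inequality with constant $\max_{j\ge 2} C_j$ by the inductive hypothesis) reduces the claim to the two-factor case. Henceforth assume $\mu = \mu_1\otimes\mu_2$, and WLOG $C_1\le C_2$.

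The main tool is the exact chain rule for entropy under a product measure,
\[
\ent_\mu f = \mu(\ent_{V_1} f) + \ent_{\mu_2}(\mu_1 f),
\]
where $\mu_1 f$ denotes the $\mu_1$-average of $f$, viewed as a function on $\Omega_2$. For the first summand I would apply the approximate Shearer inequality for $\mu_1$ at each fixed $\sigma_2$, with the induced weights $\beta^{(1)}_B := \sum_{A\subset V:\, A\cap V_1=B}\alpha_A$ on subsets $B\subset V_1$. These form a probability on $2^{V_1}$ with $\gamma(\beta^{(1)})\ge\gamma(\alpha)$, and averaging over $\sigma_2\sim\mu_2$ gives
\[
\gamma(\alpha)\,\mu(\ent_{V_1} f)\le C_1\,S_1, \qquad S_1 := \sum_A \alpha_A\,\mu(\ent_{A_1} f).
\]

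The decisive step is bounding the second summand. Applying approximate Shearer for $\mu_2$ to the function $\mu_1 f$ with weights $\beta^{(2)}_B := \sum_{A:\,A\cap V_2=B}\alpha_A$ reduces matters to controlling $\mu_2(\ent_B(\mu_1 f))$ for each $B\subset V_2$. The key observation is that, for \emph{any} $A_1\subset V_1$, the tower property $\mu_1 f = \mu_1(\mu_{A_1} f)$ combined with Jensen's inequality for the convex functional $\ent$ yields $\mu_2(\ent_B(\mu_1 f))\le \mu(\ent_B(\mu_{A_1} f))$, which the product chain rule $\mu(\ent_{A_1\sqcup B} f) = \mu(\ent_{A_1} f) + \mu(\ent_B(\mu_{A_1} f))$ rewrites as
\[
\mu_2(\ent_B(\mu_1 f))\le \mu(\ent_{A_1\sqcup B} f) - \mu(\ent_{A_1} f)\qquad \forall\, A_1\subset V_1.
\]
Taking the convex combination in $A_1$ with weights $\alpha_{A_1\sqcup B}/\beta^{(2)}_B$, multiplying by $\beta^{(2)}_B$, and summing over $B$ telescopes to $\sum_B\beta^{(2)}_B\,\mu_2(\ent_B(\mu_1 f))\le \sum_A\alpha_A\,\mu(\ent_A f) - S_1$, so that $\gamma(\alpha)\ent_{\mu_2}(\mu_1 f)\le C_2\bigl(\sum_A\alpha_A\mu(\ent_A f) - S_1\bigr)$.

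Adding the two bounds,
\[
\gamma(\alpha)\,\ent_\mu f \le (C_1-C_2)\,S_1 + C_2\sum_A\alpha_A\,\mu(\ent_A f) \le C_2\sum_A\alpha_A\,\mu(\ent_A f) = \max(C_1,C_2)\sum_A\alpha_A\,\mu(\ent_A f),
\]
since $C_1\le C_2$ and $S_1\ge 0$. I expect the main hurdle to be precisely the telescoping step: the naive convexity bound $\mu_2(\ent_B(\mu_1 f))\le \mu(\ent_B f)$ (corresponding to the choice $A_1=\emptyset$) would only produce the weaker constant $C_1+C_2$, so one needs the finer weighted convex combination keyed to $\alpha$, coupled with the product chain rule, in order to extract the sharp constant $\max_i C_i$.
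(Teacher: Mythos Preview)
Your proof is correct and follows essentially the same strategy as the paper's. Both arguments hinge on the same two ingredients: (i) the chain-rule decomposition $\mu(\ent_{A}f)=\mu(\ent_{A\cap V_1}f)+\mu(\ent_{A\cap V_2}(\mu_{A\cap V_1}f))$ coming from the product structure, and (ii) the convexity step $\mu(\ent_{B}(\mu_{A_1}f))\ge \mu(\ent_{B}(\mu_1 f))$ for $B\subset V_2$, $A_1\subset V_1$. Your ``key observation'' and telescoping produce exactly the inequality $S_1+\sum_B\beta^{(2)}_B\mu_2(\ent_B(\mu_1 f))\le \sum_A\alpha_A\,\mu(\ent_A f)$, which is the $n=2$ instance of the paper's display \eqref{eq2} obtained via \eqref{eq:decop}--\eqref{eq:decop2}. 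The only organizational difference is that you reduce to two factors by induction, while the paper treats general $n$ in one pass with the iterated decomposition $\ent f=\sum_i\mu[\ent_{V_{\le i}}\mu_{\le i-1}f]$; neither route buys anything the other does not.
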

\begin{proof}
	We apply a convexity argument similar to that of \cite[Lemma 3.2]{CP2021}, which establishes a closely related bound. Let $V_i$ denote the vertex set of the component $\mu_i$, each with size $|V_i|=L_i$, and write $V$ for the full vertex set of the system, with size $|V|=L_1+\cdots L_n$.	
	Iterating the decomposition \eqref{eq:deco} shows that 
	\begin{align}\label{eq:decost}
	\ent f=\textstyle{\sum_i}\mu\left[\ent_{V_{\le i}}\mu_{\leq i-1}f\right] ,
	\end{align} where $V_{\le i}:=V_1\cup\cdots\cup V_i$, and $\mu_{\leq i} := \otimes_{j=1}^i\mu_j$.	
	Let us fix the weights $\alpha = \{\alpha_A, A \subset V\}$ and an order $A_1,\dots,A_c$, $c=2^{|V|}$ of the subsets $A \subset V.$ Let us define $A_{i,j}:=V_i\cap A_j$ and $I_j := \{i:A_{i,j}\neq \emptyset\}$. Note that $A_j = \bigcup_{i \in I_j}A_{i,j}.$ Moreover, we define $A_{\leq i,j}:=\bigcup_{k \in I_j ,\, k \leq i}A_{k,j}.$ Arguing as in \eqref{eq:decost}, for each $j$ one has the decomposition 
	\begin{align}\label{eq:decop}
		\mu\left[\ent_{A_j}f\right] &= \sum_{i \in I_j}\mu\left[\ent_{A_{\leq i,j}}\mu_{A_{\leq i-1, j}}f\right] % \nonumber \\
		%&
		= \sum_{i \in I_j}\mu\left[\ent_{A_{i,j}}\mu_{A_{\leq i-1, j}}f\right] \,,
		%\\
%		&\geq \sum_{i \in I_j}\mu \left[\ent_{A_{i,j}}\mu_{\leq i-1}f\right],
	\end{align}
	where the last identity uses the product structure of $\mu$.
	
	The key observation is that, by convexity, for all $i\in I_j$, 
	\begin{align}\label{eq:decop2}
		\mu\left[\ent_{A_{i,j}}\mu_{A_{\leq i-1, j}}f\right] \ge \mu \left[\ent_{A_{i,j}}\mu_{\leq i-1}f\right]\,.
	\end{align}
The proof of \eqref{eq:decop2} uses the product structure of $\mu$, see the proof of \cite[Lemma 3.2]{CP2021} for the details.
By averaging with respect to the weights $\a$, and exchanging the summations, %one has
	\begin{align}
		\sum_{j}\alpha_{A_j}\mu\left[\ent_{A_j}f\right] \geq %\sum _j\sum_{i \in I_j}\alpha_{A_j}\mu\left[\ent_{A_{i,j}}\mu_{\leq i-1}f\right] \\
		 \sum_i\sum_{j :\, i \in I_j}\alpha_{A_j}\mu\left[\ent_{A_{i,j}}\mu_{\leq i-1}f\right] \label{eq2}
	\end{align}
	Now let $\a^i=(\a^i_U,\,U\subset V_i)$ denote the weights defined by $\a^i_{U}:=\sum_{j:\,i\in I_j}\IND_{U=A_{i,j}}\a_{A_j}$. 
	The approximate Shearer inequality for $\mu_i$, with constant $C_i$, applied to the function $\mu_{\leq i-1}f$, yields 
	\begin{align}
		&C_i\,\textstyle{\sum_{j :\, i \in I_j}}\alpha_{A_j}\mu_i\left[\ent_{A_{i,j}}\mu_{\leq i-1}f\right]
		\nonumber\\&\;\; =C_i\,\textstyle{\sum_{U\subset V_i}}\alpha^i_{U}\mu_i\left[\ent_{U}\mu_{\leq i-1}f\right] \label{eq3}
		\ge \g_i(\a^i)\ent_{V_i}\mu_{\leq i-1}f\,,
	\end{align} 
	where $\gamma_i(\alpha^i) := \min_{x \in V_i}\sum_{j :\, x \in A_j}\alpha_{A_j}$. Since $ \gamma_i(\alpha^i)\geq \gamma(\alpha)$, 
	averaging over $\mu$, and taking $C=\max_iC_i$, from \eqref{eq2}-\eqref{eq3} we obtain
	\begin{align}
	C	\,\textstyle{\sum_{j}}\alpha_{A_j}\mu\left[\ent_{A_j}f\right] \geq \g(\a)\sum_i\mu\left[\ent_{V_i}\mu_{\leq i-1}f\right] \,.
	%= \g(\a)\,\ent f\,.\label{eq4}
	\end{align}
Since $\mu$ is a product of $\mu_i$'s, we may replace $\mu\left[\ent_{V_i}\mu_{\leq i-1}f\right] $ with $\mu\left[\ent_{V_{\le i}}\mu_{\leq i-1}f\right]$ and using \eqref{eq:decost} concludes the proof.
%	Iterating the decomposition \eqref{eq:deco} shows that \[\textstyle{\sum_i}\mu\left[\ent_{V_{\le i}}\mu_{\leq i-1}f\right] =\ent f,\] where $V_{\le i}=V_1\cup\cdots\cup V_i$. Moreover, the product structure of $\mu$ implies that $\ent_{V_{\le i}}\mu_{\leq i-1}f=\ent_{V_i}\mu_{\leq i-1}f$ and therefore \eqref{eq4} and the arbitrariness of the weights $\a$ conclude the proof. 
\end{proof}

\bigskip

We are now able to conclude the proof of  Theorem \ref{mainthm4}. The estimate \eqref{brownin1gen} allows us to reduce the problem to an estimate on the entropy $\ent_{\r_1}f$, where $\r_1$ is the product over blocks $\eta^i$ given in \eqref{prodmeasuregen}. By the assumption that $\mu_i$ satisfies the strong approximate Shearer inequality with constant $C_i$, we know that, almost surely, each $\r_{1,i}$ must satisfy the approximate Shearer inequality with constant $C_i$. Thus, by Lemma \ref{lem:ShearTenso} we infer that, almost surely,  $\r_1$ satisfies
the approximate Shearer inequality with constant $C=\max_iC_i$. 
Therefore, if $\a=(\a_A, \,A\subset V)$ is an arbitrary choice of weights for the full system, %one has
 	\begin{align}\label{eq:b2321}
		C\,\textstyle{\sum_{A \subset V}}\alpha_A\, \rho_1\!\left(\ent_{\rho_{1,A}}f\right) \geq \gamma(\alpha)\,\ent_{\rho_1}f.
	\end{align}
Taking the expectation,  and using Lemma \ref{submarlemma} as in \eqref{eq:b232} and \eqref{brownin1gen} we conclude that 
\begin{align}\label{eq:b231a}
		C\,\textstyle{\sum_{A\subset V}}\alpha_A\rho\left(\ent_A f\right)
\ge \gamma(\alpha)\,\bbE\left[\ent_{\rho_1}f\right]\ge \g(\a)(1-R\l(\G))\ent_\rho(f).
	\end{align}
This concludes the proof of the first statement in Theorem \ref{mainthm4}. To prove the second statement, namely that we can replace $R$ with $\max_iC_i$, it suffices to show that $R\le \max_iC_i$. This is the content of the next lemma,
 which generalizes
Lemma \ref{lem:cov} and provides a useful relation between the approximate Shearer inequality and the constant $R$.

\begin{lemma}\label{lem:Rlemma}
		Suppose each component $\mu_i$ satisfies the strong approximate Shearer inequality with constant $C_i$. 	
	Then 
		\[ 
		R\le \max_iC_i.
		\] 
	\end{lemma}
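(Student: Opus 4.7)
The plan is to specialize the strong approximate Shearer inequality for $\mu_i$ to singleton weights in order to extract a modified log-Sobolev inequality, linearize it to a Poincaré inequality, and then test that Poincaré inequality against linear functions of the indicator variables $\eta^{i,\ell}_a = \IND_{\sigma_i(\ell)=a}$ to control the covariance matrix $\Sigma_i^\psi$.

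First, I fix $i \in [n]$ and an arbitrary self-potential $\psi$ on component $i$; write $\mu = \mu_i^\psi$ for brevity. Taking the weights $\alpha_A = \frac{1}{L_i}\IND_{|A|=1}$ in Definition~\ref{def:ent_fact}, the hypothesis that $\mu_i$ satisfies the \emph{strong} approximate Shearer inequality with constant $C_i$ (uniformly over all self-potentials) yields the Glauber-type modified log-Sobolev estimate
\begin{equation}\label{eq:mlsi-Rlem}
\Ent_\mu f \le C_i \sum_{\ell=1}^{L_i}\mu\bigl(\Ent_\ell f\bigr),\qquad f \ge 0.
\end{equation}

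Next, I linearize \eqref{eq:mlsi-Rlem}. Applying \eqref{eq:mlsi-Rlem} to $f = 1+\varepsilon g$ for a bounded function $g$ and using the standard expansion $\Ent(1+\varepsilon g) = \tfrac{\varepsilon^2}{2}\var(g) + O(\varepsilon^3)$ on both sides produces the Poincaré inequality
\begin{equation}\label{eq:poinc-Rlem}
\var_\mu g \le C_i \sum_{\ell=1}^{L_i}\mu\bigl(\var_\ell g\bigr),
\end{equation}
valid for every $g:\Omega_i \to \bbR$.

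Now I test \eqref{eq:poinc-Rlem} against linear functions of the indicator variables. Fix $u \in \bbR^{q_i L_i}$ and set $g(\sigma_i) := \sum_{\ell,a} u_{\ell,a}\,\IND_{\sigma_i(\ell)=a}$. Then by definition of $\Sigma_i^\psi$ in \eqref{def:R_i},
\[
\var_\mu g = \langle u,\Sigma_i^\psi u\rangle .
\]
For each site $\ell$, the conditional variance $\var_\ell g$ (with all other coordinates frozen) is exactly the quadratic form $\sum_{a,b}u_{\ell,a}u_{\ell,b}C_{a,b}$ associated to the covariance matrix of the conditional distribution of $\sigma_i(\ell)$, which has the form treated in Lemma~\ref{lem:cov}. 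That lemma gives the pointwise bound
\[
\var_\ell g \le \tfrac{1}{2}\sum_{a\in[q_i]}u_{\ell,a}^2 ,
\]
and summing over $\ell$ yields $\sum_\ell \mu(\var_\ell g) \le \tfrac{1}{2}\|u\|_2^2$. Combining with \eqref{eq:poinc-Rlem},
\[
\langle u,\Sigma_i^\psi u\rangle \le \tfrac{C_i}{2}\|u\|_2^2 ,
\]
so that $\lambda(\Sigma_i^\psi)\le C_i/2$. Taking the supremum over $\psi$ and the maximum over $i$, and recalling $R = 2\max_i\sup_\psi \lambda(\Sigma_i^\psi)$, we conclude $R\le \max_i C_i$.

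No step is really hard: the linearization is classical, and the remaining work is the quadratic bookkeeping. The only point worth double-checking is step \eqref{eq:poinc-Rlem}, since one must verify that $\var_\ell$ in the Poincaré inequality is indeed the conditional variance against which Lemma~\ref{lem:cov} applies; this matches the definition of $\Ent_\ell$ used in Definition~\ref{def:ent_fact}, and the linearization of $\Ent_\ell(1+\varepsilon g)$ produces $\tfrac{\varepsilon^2}{2}\var_\ell g$ pointwise before averaging over $\mu$.
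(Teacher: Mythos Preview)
Your proof is correct and follows essentially the same approach as the paper's own proof: both specialize the strong approximate Shearer inequality to singleton weights, linearize entropy to variance, test against the linear function $g=\langle u,\omega\rangle$, and invoke Lemma~\ref{lem:cov} to bound the single-site conditional variances by $\tfrac12\sum_a u_{\ell,a}^2$. The only cosmetic difference is the order of operations (you specialize the weights first and then linearize, whereas the paper linearizes the general Shearer inequality first and then specializes), which is immaterial.
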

	\begin{proof}
	Fix $i\in [n]$ and the self potentials $\psi$. It suffices to prove that $\l(\Sigma^\psi_i)\le C_i/2$. With the notation from \eqref{def:R_i},
			for any $u^i\in\bbR^{q_iL_i}$ one has
		\[
		\textstyle{\sum_{\ell,k}\sum_{a,b}}u^{i,\ell}_a\Sigma^\psi_i(\ell,a;k,b)u^{i,k}_b= \var_{i}(\scalar{u^i}{\o^i})\,,
		% = \frac12\textstyle{\sum_{a,b}}m_am_b(u_a-u_b)^2\,,
		\]
		where $\o^i\in\bbR^{q_iL_i}$ is the random vector $\o^i=(\o^{i,\ell}_a, \ell\in[L_i],a\in[q_i])$ with $\o^{i,\ell}_a = \IND_{\si_i(\ell)=a}$, $\var_{i}$ denotes the variance with respect to ${\mu^{\psi}_i}$ and we write 
		$\scalar{u^i}{\o^i}=\sum_{(\ell,a)}u^{i,\ell}_a\o^{i,\ell}_a$.  By linearizing the Shearer inequality for $\mu^{\psi}_i$ one obtains the variance inequality
		\begin{align}\label{eq:SheaVar}
C_i		\textstyle{\sum_{A}}\alpha_{A}\mu^{\psi}_i\left[\var_{i,A}f\right] \geq \g(\a)\var_{i}f \,,
	\end{align}
	where the sum runs over subsets $A\subset [L_i]$, $\a$ are arbitrary weights, $\var_{i,A}f$ denotes the variance with respect to the conditional distribution obtained from $\mu^{\psi}_i$ by freezing all variables $\si_i(\ell)$, $\ell\notin A$, and $\g(\a)=\min_\ell\sum_{A: \,\ell\in A}\a_A$; see e.g.\ \cite{PCnotes} for the standard linearization argument. Specializing to $\a_A = L_i^{-1}\IND_{|A|=1}$ one has the Efron-Stein inequality
		\begin{align}\label{eq:SheaVar2}
C_i		\,\textstyle{\sum_{\ell}}\,\mu^{\psi}_i\left[\var_{i,\ell}f\right] \geq \var_{i}f \,,
	\end{align}
where $\var_{i,\ell}f$ denotes the variance with respect to the conditional distribution obtained from $\mu^{\psi}_i$ by freezing all variables $\si_i(k)$, $k\neq \ell$. Taking $f= \scalar{u^i}{\o^i}$ we see that for each fixed pair $i,\ell$, 
	\[
	\var_{i,\ell}f = \var_{i,\ell}(\textstyle{\sum_{a}}u^{i,\ell}_a\o^{i,\ell}_a)=	%\textstyle{\sum_{a,b}}u_aC_{a,b}u_b = \var(\scalar{u}{\o}) = 
	\frac12\textstyle{\sum_{a,b\in [q_i]}}m_am_b(u^{i,\ell}_a-u^{i,\ell}_b)^2\,,
		\]
	where $m_a$ is the probability of the event $\si_i(\ell)=a$ conditionally on all variables $\si_i(k)$, $k\neq \ell$. Thus we may use the estimate in the proof of Lemma \ref{lem:cov} to conclude that $\var_{i,\ell}f \le \frac12\textstyle{\sum_{a}}(u^{i,\ell}_a)^2$.
	In conclusion,
	\[
	 \var_{i}(\scalar{u^i}{\o^i})\le \tfrac{1}2\,C_i\, \textstyle{\sum_{(\ell,a)}}(u^{i,\ell}_a)^2,
	 \]
	 which implies that the maximum eigenvalue of $\Sigma^\psi_i$ is bounded by $C_i/2$. 
		\end{proof}

Another class of distributions for which a bound on $R$ can be obtained is the product of Bernoulli distributions conditioned on their sum. 
\begin{lemma}\label{lem:Rlemma2}
		Suppose $q=2$, and, for each $i\in[n]$, let $\mu_i$ be a product measure conditioned on a fixed value of the sum, namely $\mu_i=\otimes_\ell \mu_{i,\ell}(\cdot\,|\,\sum_{\ell}\si_i(\ell)=k_i)$ for some $k_i\in\bbN$ and some arbitrary Bernoulli probability measures $\mu_{i,\ell}$. 
%		Then, or all choices of $\psi$,  	
%		\[ 
%		\mu_i^\psi(\IND_{\{\si_i(\ell)=1\}}\IND_{\{\si_i(k)=1\}})\le \mu_i^\psi(\IND_{\{\si_i(\ell)=1\}}) \mu_i^\psi(\IND_{\{\si_i(k)=1\}}) 
%	\,,\qquad  \ell\neq k.	\] 
		Then $R\le 2$, and the bound is optimal. 
	\end{lemma}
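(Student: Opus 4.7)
The plan is to reduce the $q_iL_i\times q_iL_i$ matrix $\Sigma_i^\psi$ to an $L_i\times L_i$ covariance matrix via the binary spin structure, and then to bound its operator norm using two elementary facts: the off-diagonal entries are non-positive (negative association of conditioned Bernoullis), and the row sums vanish because the total spin sum is deterministic.

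Setting $X_\ell := \IND_{\sigma_i(\ell)=1} = \eta^{i,\ell}_1$, one has $\eta^{i,\ell}_2 = 1-X_\ell$, and a direct computation yields the tensor factorization
\begin{align*}
\Sigma_i^\psi \;=\; \Sigma \otimes \begin{pmatrix}1 & -1 \\ -1 & 1\end{pmatrix}, \qquad \Sigma_{\ell,k} := \cov_{\mu_i^\psi}(X_\ell, X_k).
\end{align*}
The $2\times 2$ factor has eigenvalues $0$ and $2$, so $\l(\Sigma_i^\psi) = 2\l(\Sigma)$ and the target bound $R\le 2$ reduces to $\l(\Sigma)\le 1/2$. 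Moreover, tilting by the self-potentials only modifies the Bernoulli parameters, preserving the class ``product of Bernoullis conditioned on sum $=k_i$'', so we may assume $\psi = 0$ and work with arbitrary parameters $p_\ell\in[0,1]$.

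Since $\sum_\ell X_\ell\equiv k_i$ is deterministic under the conditioned measure, $\sum_k\Sigma_{\ell,k}=\cov(X_\ell,\sum_k X_k)=0$ for every $\ell$. Combined with the claim $\Sigma_{\ell,k}\le 0$ for $\ell\ne k$, this forces $\sum_{k\ne\ell}|\Sigma_{\ell,k}|=\Sigma_{\ell,\ell}\le 1/4$, so each row of the symmetric matrix $\Sigma$ has absolute sum at most $1/2$ and Gershgorin yields $\l(\Sigma)\le 1/2$. The only non-routine step is the negative association $\Sigma_{\ell,k}\le 0$; I would prove it by letting $S_{\ell,k}:=\sum_{j\ne\ell,k}Y_j$ denote the sum of the remaining independent Bernoullis and checking directly that
\begin{align*}
\cov(X_\ell,X_k) \;=\; \frac{p_\ell p_k(1-p_\ell)(1-p_k)}{\Prob(S=k_i)^2}\bigl(\Prob(S_{\ell,k}=k_i-2)\Prob(S_{\ell,k}=k_i)-\Prob(S_{\ell,k}=k_i-1)^2\bigr),
\end{align*}
whose sign is non-positive by log-concavity of the distribution of a sum of independent Bernoullis (alternatively, one may simply cite the Joag-Dev--Proschan negative association theorem for conditional Bernoullis). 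This log-concavity step is the main potential obstacle, although it is a classical fact.

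Finally, for tightness of the bound $R=2$, the extremal example is $L_i=2$, $k_i=1$, $p_1=p_2=1/2$: the conditioned measure is uniform on $\{(1,0),(0,1)\}$, $\Sigma=\tfrac14\begin{pmatrix}1 & -1 \\ -1 & 1\end{pmatrix}$ has maximum eigenvalue $1/2$, and hence $\l(\Sigma_i^\psi)=1$ and $R=2$ is achieved.
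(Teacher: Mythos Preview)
Your proof is correct and follows essentially the same route as the paper: reduce the $q_iL_i\times q_iL_i$ covariance to the $L_i\times L_i$ covariance $\Sigma$ of the Bernoullis $X_\ell$, use negative off-diagonal entries (the paper cites the strong Rayleigh property of \cite{BBL2009}, you propose Joag-Dev--Proschan or a direct log-concavity computation---your explicit formula is correct) together with zero row sums to get the $\ell^1$-type bound $\l(\Sigma)\le 1/2$, and then verify tightness on the same two-site example. The only cosmetic differences are that you package the reduction as a tensor product and invoke Gershgorin, while the paper writes out the quadratic form $\var(\sum_\ell v_\ell X_\ell)=\tfrac12\sum_{\ell,k}(v_\ell-v_k)^2|\Sigma_{\ell,k}|$ and bounds it by hand; also note that in your optimality example ``$k_i=1$'' refers to $\sum_\ell X_\ell$ rather than $\sum_\ell\sigma_i(\ell)$ as in the lemma statement, but the example is of course the same.
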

	\begin{proof}
We fix some $i\in[n]$ and $k_i\in\bbN$.	As in the proof of Lemma \ref{lem:Rlemma}, %With the notation from \eqref{def:R_i},
			for any $u^i\in\bbR^{q_iL_i}$ one has
		\[
		\textstyle{\sum_{\ell,k}\sum_{a,b}}u^{i,\ell}_a\Sigma^\psi_i(\ell,a;k,b)u^{i,k}_b= \var_{i}(\scalar{u^i}{\o^i})\,,
		% = \frac12\textstyle{\sum_{a,b}}m_am_b(u_a-u_b)^2\,,
		\]
		where $\o^i\in\bbR^{q_iL_i}$ is the random variable $\o=(\o^{i,\ell}_a, \ell\in[L_i],a\in[q_i])$ with $\o^{i,\ell}_a = \IND_{\si_i(\ell)=a}$. 	Since $q=2$, 		
		\[\scalar{u^i}{\o^i}=\textstyle{\sum_{\ell}}\left(u^{i,\ell}_1\o^{i,\ell}_1 + u^{i,\ell}_2(1-\o^{i,\ell}_1)\right)=
		\textstyle{\sum_{\ell}}v_\ell X_\ell + {\rm const.}\]
		where $v_\ell:=(u^{i,\ell}_1- u^{i,\ell}_2)$ and $X_\ell:=\o^{i,\ell}_1\in\{0,1\}$ is a Bernoulli random variable. 
		Under $\mu_i^\psi$, the $X_\ell$ are independent random Bernoulli variables conditioned on their sum.  
		Such random variables have the so-called strong Rayleigh property \cite{BBL2009}, and therefore they are negatively correlated, for all choices of the external fields $\psi$. Thus the matrix $\cC(\ell,k):=\mu_i^\psi(X_\ell X_k)-\mu_i^\psi(X_\ell)\mu_i^\psi(X_k)$ satisfies $\cC(\ell,k)\le 0$ for all $\ell\neq k$. Moreover, $\sum_k \cC(\ell,k)=0$ for all $\ell$, because of the constraint $\sum_{\ell}\si_i(\ell)=k_i$. Thus, \[
		\textstyle{\sum_{\ell:\,\ell\neq k}} |\cC(\ell,k)| = \cC(k,k) = m_k(1-m_k),
		\] where $m_k=\mu_i^\psi( X_k)$. In conclusion, we have
		\begin{align*}
		\var_{i}(\scalar{u^i}{\o^i}) &= \var_{i}\(\textstyle{\sum_{\ell}}v_\ell X_\ell\) = \tfrac12\textstyle{\sum_{\ell,k}}(v_\ell-v_k)^2|\cC(\ell,k)|\\
		&\le \textstyle{\sum_{\ell,k}}(v^2_\ell+v^2_k)\IND_{\ell\neq k}|\cC(\ell,k)|=2\sum_k v_k^2m_k(1-m_k)		\\
		&\le \tfrac12 \textstyle{\sum_k} v_k^2
\le \textstyle{\sum_{k}}((u^{i,\ell}_1)^2+ (u^{i,\ell}_2)^2)=\scalar{u^i}{u^i}\,.
		\end{align*}
This shows that $\l(\Sigma^\psi_i)\le 1$, and therefore $R\le 2$. 

To prove that the bound $R\le 2$ %in Lemma \ref{lem:Rlemma2} 
cannot be improved, take $L_i=2$, $q_i=2$, $k_i=3$, and the distribution $\mu_i$ which gives probability $1/2$ to 
$(\si_i(1)=1,\si_i(2)=2)$ and probability $1/2$ to 
$(\si_i(1)=2,\si_i(2)=1)$. Then $\Sigma_i$ is the $4\times 4$ matrix $\Sigma_i(\ell,a;k,b)$, $\ell,k\in[2]$, $a,b\in[2]$ given by 
$\Sigma_i=\frac14 J$ where $J$ has first and fourth row equal to $(1,-1,-1,1)$ and second and third row equal to $-(1,-1,-1,1)$. Then $\Sigma_i$ has maximal eigenvalue $1$ and thus $R=2$ in this case.  
	\end{proof}

\bibliographystyle{plain}

\bibliography{references}

\end{document}